\newtheorem{thmA}{Theorem}
\newtheorem{theorem}{Theorem}[section]
\newtheorem{proposition}[theorem]{Proposition}
\newtheorem{lemma}[theorem]{Lemma}
\newtheorem{corollary}[theorem] {Corollary}
\newtheorem{conjecture}[theorem]{Conjecture}
\theoremstyle{remark}
\newtheorem{remark}[theorem]{Remark}
\newtheorem{remarks}[theorem]{Remarks}
\newtheorem{example}[theorem]{Example}
\theoremstyle{definition}
\newtheorem{definition}[theorem]{Definition}
\def\Z{\mathbb Z}
\def\N{\mathbb N}
\def\G{\Gamma}
\def\-{\overline}
\def\wh{\widehat}
\def\rk{{\rm{rk}}}
\def\G{\Gamma}  
\def\g{\gamma}
\def\k{\kappa}
\def\<{\langle}
\def\>{\rangle}
\title[The Profinite Triviality Problem]{The triviality problem for profinite completions}
\author[Martin R.\ Bridson]{Martin R.\ Bridson}
\address{Mathematical Institute, Andrews Wiles Building, Radcliffe Observatory Quarter, Oxford OX2 6GG, UK}
\email{bridson@maths.ox.ac.uk}
\author[Henry Wilton]{Henry Wilton}
\address{Department of Mathematics, University College London, Gower Street, London WC1E  6BT, UK}
\curraddr{DPMMS, Centre for Mathematical Sciences, Wilberforce Road, Cambridge CB3 0WB, UK}
\email{h.wilton@maths.cam.ac.uk}
\thanks{This work was supported by Fellowships from the EPSRC (both authors) and by a Wolfson Research Merit Award from the Royal Society (first author).}
 \date{15 December 2014}
\subjclass[2010]{20F10, 20F67, 57M07, (20E18, 20F65)}
\begin{document}

\maketitle

\begin{abstract}
We prove that there is no algorithm that can determine whether or not a finitely presented group has a non-trivial finite quotient; indeed, this property remains undecidable among the fundamental groups of compact, non-positively curved square complexes.  We deduce that many other properties of  groups are undecidable. For hyperbolic groups, there cannot exist algorithms to determine largeness, the existence of a linear representation with infinite image (over any infinite field), or the  rank of the profinite completion.
\end{abstract}


\section{Introduction}
The basic decision problems for finitely presented groups provided a guiding theme for combinatorial and geometric group theory throughout the twentieth century. Activity in the first half of the century was framed by Dehn's articulation of the core problems in 1911 \cite{dehn_unendliche_1911}, and it reached a climax in 1957-58 with the proof by Novikov \cite{novikov_algorithmic_1955} and Boone \cite{boone_word_1959} that there exist finitely presented groups with unsolvable word problem. In the wake of this, many other questions about general finitely presented groups were proved to be algorithmically unsolvable (cf.~Adyan \cite{adyan_algorithmic_1955,adyan_unsolvability_1957}, Rabin \cite{rabin_recursive_1958}, Baumslag--Boone--Neumann \cite{bausmlag_unsolvable_1959}). In the decades that followed, the study of decision problems shifted towards more refined questions concerning the existence of algorithms within specific classes of groups, and to connections with geometry and topology. However, certain basic decision problems about general finitely presented groups were not covered by the techniques developed in mid-century and did not succumb to the geometric techniques developed in the 1990s. The most obvious of these is the following: can one decide whether or not a group has a proper subgroup of finite index?
 
Our main purpose here is to settle this question.

\begin{thmA}\label{thm: Main theorem}\label{t:main}
There is no algorithm that can determine whether or not a finitely presented group has a proper subgroup of finite index. 
\end{thmA}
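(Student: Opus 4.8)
Having a proper subgroup of finite index is the same as having a non-trivial finite quotient, i.e.\ the profinite completion $\wh{\Gamma}$ being non-trivial. So it suffices to exhibit a recursive family of finite presentations $\langle\mathcal P_n\rangle$, presenting groups $\Gamma_n$ that are fundamental groups of compact non-positively curved square complexes, with $\{\,n:\wh{\Gamma_n}\neq 1\,\}$ non-recursive. I will obtain this by a many-one reduction from a known undecidable problem. Since the property ``$\Gamma$ has a non-trivial finite quotient'' is recursively enumerable --- one searches over finite groups and homomorphisms --- the target set is recursively enumerable, so the reduction must start from a recursively enumerable set; the Adyan--Rabin theorem provides one, the set of finite presentations of the trivial group, which is recursively enumerable (enumerate consequences of the relators) but not recursive. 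Concretely I want an algorithm sending a finite presentation of a group $Q$ to a compact non-positively curved square complex $X_Q$ with
\[
\wh{\pi_1 X_Q}\neq 1 \iff Q=1 .
\]
The two directions pull in opposite ways: for a ``generic'' $Q$ --- any non-trivial group, even one with enormous finite quotients --- the construction must \emph{kill all} finite quotients of $\pi_1 X_Q$, while for the undecidably rare trivial presentation it must \emph{produce} one.

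\textbf{Ingredients.}
Killing finite quotients forces one outside the residually finite world, since every subgroup of an infinite residually finite group has a non-trivial finite quotient. One uses instead that the class of fundamental groups of compact non-positively curved square complexes is strictly larger: by Burger--Mozes it contains infinite simple groups, and by Wise it contains non-residually-finite groups; the plan is to encode $Q$ into such a group so that a non-trivial $Q$ cannot disturb the ambient group's lack of finite quotients. To land $Q$ inside square-complex groups at all one uses an effective Rips-type construction --- an algorithm producing, uniformly in a finite presentation of $Q$, a short exact sequence $1\to N\to G\to Q\to 1$ with $N$ finitely generated and $G=\pi_1$ of a compact non-positively curved square complex. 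To stay inside the class while passing to the subgroup that carries the argument --- essentially the fibre product $P=\{(g_1,g_2)\in G\times G:\ g_1N=g_2N\}$ --- one invokes the $1$--$2$--$3$ theorem of Baumslag--Bridson--Miller--Short, giving that $P$ is finitely presented (hence again $\pi_1$ of a compact non-positively curved square complex) after a recursive upgrade of $Q$ to type $F_3$; and one uses the Platonov--Tavgen / Bridson--Grunewald analysis of $\wh P$, which determines the finite quotients of $P$ from those of $G\times G$ together with a contribution of $H_2(Q;\Z)$, to convert the triviality or otherwise of $Q$ into the presence or absence of finite quotients of $\pi_1 X_Q$.

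\textbf{Assembly and the main obstacle.}
Assembling the pieces: from a finite presentation of $Q$ one forms, recursively, a group $Q'$ of type $F_3$ with controlled integral homology and with $Q'=1\iff Q=1$; one runs the Rips-type construction on $Q'$ to get $1\to N'\to G'\to Q'\to 1$ with $N'$ finitely generated and $G'$ a compact-square-complex group; and one builds $\pi_1 X_Q$ from $G'$, the fibre product $P'\le G'\times G'$ (finitely presented, hence $\pi_1$ of a compact non-positively curved square complex), and an infinite simple square-complex group, amalgamated so that a non-trivial $Q'$ is absorbed into the simple group and $\pi_1 X_Q$ is left with no proper finite-index subgroup, whereas a trivial $Q'$ makes the construction degenerate and a finite quotient appear. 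Running the $\wh{P'}$-analysis through this then gives $\wh{\pi_1 X_Q}\neq 1\iff Q'=1\iff Q=1$. Since the set of presentations of the trivial group is not recursive and $Q\mapsto X_Q$ is recursive, no algorithm can decide whether the fundamental group of a compact non-positively curved square complex --- a fortiori whether an arbitrary finitely presented group --- has a proper subgroup of finite index. The technical heart, and what I expect to be the main obstacle, is exactly this: realising, simultaneously and \emph{effectively} within the category of compact non-positively curved square complexes, the two opposing requirements --- that a Rips-type extension and its fibre product have \emph{no} finite quotient for generic input (which is what drives the construction into the Burger--Mozes/Wise realm of simple and non-residually-finite square-complex groups) and yet are \emph{forced} to acquire one precisely when the input group collapses.
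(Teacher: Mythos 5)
There is a genuine gap, and it sits exactly where you flag ``the main obstacle''. Your plan is a many-one reduction from the triviality problem: an algorithm $Q\mapsto X_Q$ with $\widehat{\pi_1X_Q}\neq 1\iff Q=1$. The hard direction is then: given an \emph{arbitrary non-trivial} finitely presented $Q$, kill every finite quotient of $\pi_1X_Q$. None of your ingredients can do this. The Platonov--Tavgen / Bridson--Grunewald analysis of the fibre product $P\le G\times G$ runs in the opposite direction: it takes $\widehat{Q}\cong 1$ (plus $H_2(Q;\Z)=0$) as a \emph{hypothesis} and concludes that $P\hookrightarrow G\times G$ induces an isomorphism of profinite completions; it never produces a group with trivial profinite completion. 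Indeed it cannot: $P$ surjects onto $G$ via either coordinate projection, and the Rips group $G$ is residually finite (that is what lets you cubulate it), so $\widehat{P}$ is non-trivial for every input $Q$. Likewise, amalgamating with a Burger--Mozes simple group can kill all finite quotients \emph{unconditionally}, but you give no mechanism by which this killing switches off precisely when $Q=1$; and when $Q=1$ nothing degenerates, since the Rips kernel $N'=G'$ is still an infinite residually finite group. The underlying obstruction is that ``$Q\neq 1$'' carries no profinite information (compare $Q=\Z$ with $Q$ infinite and $\widehat{Q}\cong 1$), so there is no way to convert it into profinite triviality of the output; relatedly, the property $\widehat{\Gamma}\cong 1$ is neither Markov nor co-Markov, which is why reductions from the discrete triviality problem are unavailable here.

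The paper starts instead from a seed that is already profinite: Slobodskoi's theorem that the universal theory of finite groups is undecidable, which yields a fixed finitely presented $G=\langle A\mid R\rangle$ for which $\{w\in F(A)\mid w\neq_{\widehat{G}}1\}$ is recursively enumerable but not recursive. The Encoding Theorem then produces $G_w$ with $\widehat{G}_w\cong 1$ when $w=_{\widehat{G}}1$ (easy: the relations push all generators into the normal closure of $w$), while if $w$ survives in \emph{some} finite quotient, $G_w$ surjects onto a malnormal amalgam of virtually free groups, which is residually finite by Wise's theorem. So the hard direction becomes \emph{producing} a finite quotient from the mere existence of one for $w$ --- handled by the strengthened omnipotence theorem and the malnormality constructions --- rather than destroying all finite quotients of an arbitrary non-trivial group. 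The square-complex refinement is a separate, much easier step. To salvage your outline you would have to replace the triviality problem by a source of undecidability that already concerns finite quotients; that is precisely the role Slobodskoi's theorem plays.
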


The technical meaning of this theorem is that there is a recursive sequence of finitely presented groups $G_n$ with the property that the set of natural numbers
\[
\{n\in\N\mid \exists H\subsetneq G_n,~|G_n/H|<\infty\}
\]
is recursively enumerable but not recursive.  More colloquially, it says that the problem of determining the existence of a proper subgroup of finite index is {\em undecidable}.

We shall strengthen Theorem \ref{t:main} by proving that the existence of such subgroups remains undecidable in classes of groups where other basic decision problems of group theory are decidable, such as biautomatic groups and the fundamental groups of compact, non-positively curved square complexes. We include this last refinement in the following geometric strengthening of Theorem \ref{t:main}.

\begin{thmA}\label{t:covers}
There is no algorithm that can determine if a compact square complex of non-positive curvature
has a non-trivial, connected, finite-sheeted covering. 
\end{thmA}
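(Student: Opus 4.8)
The plan is to deduce Theorem~\ref{t:covers} from a geometric sharpening of Theorem~\ref{t:main}: there is a recursive sequence of compact non-positively curved square complexes $X_n$, specified by explicit combinatorial data, for which the set $\{n\in\N : \pi_1(X_n)\text{ has a proper subgroup of finite index}\}$ is recursively enumerable but not recursive. Granting this, Theorem~\ref{t:covers} is immediate: for a compact square complex $X$ the connected finite-sheeted coverings correspond to the finite-index subgroups of $\pi_1(X)$, such a covering is non-trivial (degree $>1$) exactly when the corresponding subgroup is proper, and compactness of $X$ forces $\pi_1(X)$ to be finitely presented with a presentation computable from $X$. So deciding the existence of a non-trivial connected finite-sheeted covering is literally the same problem as deciding whether $\pi_1$ of a compact non-positively curved square complex has a proper finite-index subgroup. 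Note that ``$\pi_1(X_n)$ has a proper finite-index subgroup'' is always semi-decidable --- enumerate the finitely many homomorphisms $\pi_1(X_n)\to S_k$ and test for transitive image --- so all of the content lies in producing a recursive family realising the stated dichotomy along a non-recursive parameter.

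To build such a family I would run an effective version of the Platonov--Tavgen/Grunewald fibre-product construction, combined with the Rips construction in the form given by Wise. Start from a finitely presented group with unsolvable word problem and, via an Adyan--Rabin style reduction made effective and homologically clean, produce a recursive family of finitely presented groups $Q_n$, each of which is either trivial or else is infinite, super-perfect ($H_1(Q_n;\Z)=H_2(Q_n;\Z)=0$), of type $F_3$, and without proper finite quotient --- the first alternative occurring precisely for $n$ in a prescribed recursively enumerable non-recursive set $W$. Wise's Rips construction is algorithmic and, applied to a presentation of $Q_n$, outputs a short exact sequence $1\to N_n\to\Gamma_n\to Q_n\to1$ with $N_n$ finitely generated and $\Gamma_n$ the fundamental group of an explicit compact non-positively curved square complex (indeed a hyperbolic, virtually special group). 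Form the fibre product $P_n=\Gamma_n\times_{Q_n}\Gamma_n\leq\Gamma_n\times\Gamma_n$; since $Q_n$ has type $F_3$ and $N_n$ is finitely generated, the 1-2-3 Theorem of Baumslag--Bridson--Miller--Short makes $P_n$ finitely presented, with a computable presentation.

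The dichotomy in the seeds then transfers to $P_n$. When $n\in W$ the seed $Q_n$ is trivial, so $\Gamma_n=N_n$ is the residually finite Rips kernel and $P_n=N_n\times N_n$ has an abundance of proper finite-index subgroups. When $n\notin W$ the seed $Q_n$ is infinite, super-perfect and has no proper finite quotient, and the Platonov--Tavgen criterion applied to $1\to N_n\to\Gamma_n\to Q_n\to1$ forces the profinite completion $\widehat{P_n}$ to be trivial, so $P_n$ is infinite yet has no proper finite-index subgroup. Provided each $P_n$ (or a group commensurable with it) can be realised as $\pi_1$ of an explicitly computable compact non-positively curved square complex $X_n$, the set $\{n : \pi_1(X_n)\text{ has a proper finite-index subgroup}\}$ is exactly $W$, which proves Theorem~\ref{t:covers} and, a fortiori, Theorem~\ref{t:main}.

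I expect the crux to be the simultaneous satisfaction of three demands. First, the output groups must be fundamental groups of \emph{compact non-positively curved square complexes}, a genuinely two-dimensional constraint: the naive product $\Gamma_n\times\Gamma_n$ already lives on a four-dimensional cube complex and $P_n$ is only a subgroup of its fundamental group, so one must exploit the special cubical geometry of Wise's Rips quotients (after Haglund--Wise) to present the relevant fibre products directly by compact square complexes. Second, the Platonov--Tavgen/Bridson--Grunewald argument must be made effective \emph{and} must genuinely yield ``no proper finite-index subgroup'' when $n\notin W$ --- this is where the homological hypotheses $H_1(Q_n)=H_2(Q_n)=0$ and the absence of finite quotients of the seeds are indispensable, and one must also control the finite quotients of $\Gamma_n$ itself. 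Third, the passage from the word problem to the seeds $Q_n$ must be recursive, compatible with these homological requirements, and must collapse to the trivial group exactly on $W$. Reconciling the Rips construction and the Grunewald fibre-product machinery with the category of compact non-positively curved square-complex groups is, I believe, the main obstacle.
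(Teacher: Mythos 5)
Your opening reduction is fine: for a compact square complex, non-trivial connected finite-sheeted coverings correspond exactly to proper finite-index subgroups of the (computably presented) fundamental group, so Theorem \ref{t:covers} amounts to exhibiting a recursive family of compact non-positively curved square complexes for which the existence of such a subgroup is r.e.\ but not recursive. The construction you propose, however, cannot produce such complexes, and the obstacle you flag at the end as ``the main obstacle'' is in fact an outright obstruction rather than a technical difficulty. A compact non-positively curved square complex is aspherical (Cartan--Hadamard), so its fundamental group is torsion-free with geometric, hence cohomological, dimension at most $2$. Your groups $P_n$ violate this: when $n\in W$ you have $P_n=\Gamma_n\times\Gamma_n$ with $\Gamma_n$ a non-elementary small-cancellation (hence $2$-dimensional) hyperbolic group, so $\operatorname{cd}(P_n)=4$; and for $n\notin W$ the fibre product still contains $N_n\times N_n$ and is again too high-dimensional. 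Passing to a commensurable group does not help, since (virtual) cohomological dimension is a commensurability invariant. No amount of Haglund--Wise specialness can realise these groups by compact square complexes. A secondary, independent gap is the seed family: producing a recursive sequence $Q_n$ that is either trivial or else infinite, superperfect, of type $F_3$ \emph{and with no proper finite quotient}, with the dichotomy along a non-recursive r.e.\ set, is not something the Adyan--Rabin machinery hands you. Forcing the non-trivial outputs to have no finite quotients at all is precisely the kind of profinite control that occupies the bulk of this paper, and cannot be assumed as an input.

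The paper's route is quite different and avoids both problems. Theorem \ref{t:main} is proved first (via the Encoding Theorem and Slobodskoi's group), yielding a recursive sequence of finite presentations $\mathcal{P}_n$ for which $\{n : \widehat{G}_n\ncong 1\}$ is r.e.\ but not recursive. Theorem \ref{t:covers} then follows from a general, purely $2$-dimensional conversion (Proposition \ref{lem: P squared}): given any finite presentation $\mathcal{P}$, one keeps the $1$-skeleton $R(\underline{a})$ of the presentation complex but replaces each $2$-cell by a copy of a fixed compact non-positively curved square complex $X$ with $\pi_1X$ infinite and $\widehat{\pi_1X}\cong 1$ (Burger--Mozes or Wise), attached by a cylinder along a local geodesic loop representing the relator. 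The link condition is local, so after scaling and subdividing the result $S(\mathcal{P})$ is again a compact non-positively curved square complex (Lemma \ref{lem: NPC square complex}), and since the glued-in pieces die in every finite quotient one gets $\widehat{\pi_1S(\mathcal{P})}\cong\widehat{|\mathcal{P}|}$ (Lemma \ref{lem: Profinite isomorphism}). This Kan--Thurston-style replacement of discs by profinitely trivial square complexes is the idea your proposal is missing; no fibre products, $1$-$2$-$3$ theorem, or Platonov--Tavgen criterion are needed.
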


There are various other natural reformulations of Theorem \ref{t:main} (and its refinements), each creating a different emphasis. For emphasis alone, one could rephrase Theorem \ref{t:main}  as {\em ``the triviality problem for profinite completions of finitely presented groups is undecidable"}: there is no algorithm that, given a finitely presented group $G$, can decide whether the profinite completion $\widehat G$ is trivial.  More substantially, since all finite groups are linear (over any field) and linear groups are residually finite, we can rephrase our main result
 as follows:
 \smallskip

{\em There is no algorithm that can determine whether or not a finitely presented group has a non-trivial
finite-dimensional linear representation (over any field); indeed the existence of such a representation is
undecidable even for the fundamental groups of compact, non-positively curved square complexes.}

\smallskip

In Section \ref{s:slobo} we shall explain how classical work of Slobodskoi \cite{slobodskoi_undecidability_1981} on the universal theory of finite groups can be interpreted as a profinite analogue of the Novikov--Boone theorem: by definition, the profinite completion $\widehat G$ is the inverse limit of the finite quotients of $G$,  and the kernel of the natural homomorphism $G\to \wh{G}$ consists of precisely those $g\in G$  that have trivial image in every finite quotient of $G$; implicitly, Slobodskoi constructs  a finitely presented group $G$ in which there is no algorithm to determine which words in the generators represent such $g\in G$.  In the setting of discrete groups, one can parlay the undecidability of the word problem for a specific group into the undecidability of the triviality problem for finitely presented groups by performing a sequence of HNN extensions and amalgamated free products, as described in Section \ref{s:strategy}.  Although the profinite setting is  more subtle and does not allow such a direct translation, we will attack the triviality problem from a similar angle, deducing Theorem \ref{t:main} from Slobodskoi's construction and the following {\em Encoding Theorem}. This is the key technical result in this paper; its proof is significantly more complex than that of the corresponding theorem for discrete groups and the details are much harder.

\begin{thmA}[Encoding Theorem]\label{thm: Profinite groups from words}\label{t:tech}
There is an algorithm that takes as input a finite presentation $\<A\mid R\>$ for a group $G$ and a word $w\in F(A)$ and outputs a presentation for a finitely presented group $G_w$ such that
\[
\widehat{G}_w\cong 1\Leftrightarrow w=_{\widehat{G}} 1~.
\]
\end{thmA}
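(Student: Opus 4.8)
\emph{Sketch of the argument.}  The plan is to run, inside the category of finite quotients, the classical reduction that turns the unsolvability of the word problem for a single group into the unsolvability of the triviality problem for finitely presented groups: given $(G,w)$ one manufactures a group $G_w$ which ``collapses completely'' when $w$ is trivial and ``retains a certificate of non-triviality'' otherwise.  What blocks a naive translation is that the collapses driving the discrete argument --- governed by Britton's Lemma and by normal forms in amalgams and HNN extensions --- are invisible in finite quotients; a Baumslag--Solitar relation $t^{-1}ut=u^{2}$, for example, kills $u$ in no finite quotient at all.  Every collapse used here must therefore be of a kind that one can control in \emph{every} finite quotient at once, and the model is Higman's group, where the cyclic relations $x_i^{-1}x_{i+1}x_i=x_{i+1}^{2}$ force every finite quotient to vanish.

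First I would replace the pair $(G,w)$ by a more convenient one, by applying effective forms of standard embedding theorems and keeping track of $w$, so that the truth value of ``$w=_{\widehat G}1$'' is unchanged and $G$ acquires whatever finiteness and cohomological properties the later bookkeeping needs.  The core of the construction is then an explicit \emph{finite} presentation
\[
G_w \;=\; \langle\, A,\ y_1,\dots,y_k \ \mid\ R,\ \mathcal{S}(w)\,\rangle,
\]
in which the auxiliary relators $\mathcal{S}(w)$ --- produced algorithmically from a word representing $w$ --- act as a \emph{switch}: among the new generators $y_j$ they behave like Higman's cyclic relators, and so force a total collapse of every finite quotient, precisely when $w$ is trivial, while imposing nothing that obstructs a finite quotient when $w$ survives.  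Being finite, this presentation exhibits $G_w$ as finitely presented.  For the geometric refinement (Theorem~\ref{t:covers}) one builds the same mechanism out of compact non-positively curved square complexes --- using Wise's version of the Rips construction, together with fibre products and the $1$--$2$--$3$ theorem of Baumslag, Bridson, Miller and Short to keep finite presentability, and Burger--Mozes-type simple groups as the ``collapsed form'' --- so that $G_w$ becomes the fundamental group of a compact NPC square complex.

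It then remains to establish $\widehat{G}_w\cong 1\Leftrightarrow w=_{\widehat G}1$ in each direction.  For the contrapositive of $(\Leftarrow)$: given a finite quotient $\phi\colon G\to F$ with $\phi(w)\neq 1$, one assembles an explicit non-trivial finite quotient of $G_w$, the point being that the certificate $\phi(w)\neq 1$ keeps the switch from firing.  For $(\Rightarrow)$: given any finite quotient $\rho\colon G_w\to F$, restrict $\rho$ to the image of $G$; since $w=_{\widehat G}1$ this forces $\rho(w)=1$, hence the switch fires, and the finite-group form of Higman's argument forces $\rho$ to be trivial, so $\widehat{G}_w\cong1$.  (When the switch fires, $G_w$ is itself a Higman-type group: infinite, but with no finite quotients --- consistent with $\widehat{G}_w\cong1$.)  In the square-complex realisation, this last step is where one uses goodness of the pieces and the exactness of profinite completion along the gluings and fibre products, so as to know that the collapse is provably total over \emph{all} finite quotients.

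The principal obstacle is the simultaneous design of the switch $\mathcal{S}(w)$: it must enforce (a) a total collapse in every finite quotient when $w$ is trivial, (b) no obstruction to some non-trivial finite quotient whenever $w$ survives in $G$ --- including when $w$ survives only in a non-abelian quotient --- and (c) compatibility with finite presentability and with a square-complex realisation.  Baumslag--Solitar-type collapses give (a) only discretely; Higman-type relators collapse in all finite quotients but tend to be too rigid for (b); reconciling these demands, and in the geometric version controlling $\widehat{G}_w$ through the cohomology of the constituent pieces, is exactly what makes the proof much harder than its discrete analogue.
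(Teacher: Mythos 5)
Your proposal correctly identifies the architecture of the problem --- one needs a ``switch'' that forces total collapse in every finite quotient exactly when $w=_{\widehat G}1$, while the hard direction is manufacturing a non-trivial finite quotient of $G_w$ from an arbitrary, uncontrolled finite quotient of $G$ in which $w$ survives --- but it stops at naming this difficulty rather than resolving it, so there is a genuine gap. You never construct $\mathcal{S}(w)$, and the model you propose (Higman-type cyclic relators $x_i^{-1}x_{i+1}x_i=x_{i+1}^{2}$) is not the mechanism that works: as you yourself observe, such relators are too rigid to permit finite quotients when $w$ survives, and nothing in the proposal explains how to escape that tension. The paper's resolution is different in kind. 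The collapse is driven by a symmetric doubling: one forms $G_1=G*\langle b_0,\dots,b_m\rangle/\langle\!\langle w^{b_i}=a_i\rangle\!\rangle$ and $G_2=G_1*\langle t\rangle$, chooses elements $c_j$ in the commutator subgroup of $\langle t,w\rangle$ (hence in the normal closure of $w$), and defines $G_w$ as the quotient of $G_2*G_2'$ by $c_i=b_i'$, $b_i=c_i'$. When $w$ dies in every finite quotient, the $c_i$ die, hence the $b_i'$ die, hence by symmetry every generator dies; this gives the forward implication with no Higman-type argument at all.

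The direction you describe as ``keeping the switch from firing'' is where the real work lies, and your proposal offers no mechanism for it. The paper must show that $F=\langle b_0,\dots,b_{m+1},c_0,\dots,c_{m+1}\rangle$ injects as a \emph{malnormal} free subgroup of rank $2m+4$ into a virtually free quotient $\Gamma$ of $G_2$, so that $G_w$ surjects the malnormal amalgam $\Gamma*_{\overline F\cong\overline F'}\Gamma'$, which is residually finite by Wise's theorem. Arranging this for an \emph{arbitrary} finite quotient in which $w$ survives requires a strengthened omnipotence theorem for virtually free groups (to force $w$ and all the generators to have equal, large order with pairwise trivially-intersecting cyclic images) together with a delicate fibre-product analysis of coverings of graphs to certify malnormality. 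None of these ingredients --- omnipotence, malnormality, Wise's residual finiteness theorem for malnormal amalgams of virtually free groups --- appears in your sketch, and without them (or a genuine substitute) the assertion that ``the certificate $\phi(w)\neq1$ keeps the switch from firing'' is unsupported. As a secondary point, your appeal to the $1$--$2$--$3$ theorem and to fibre products for finite presentability is unnecessary: $G_w$ is finitely presented by construction, and the square-complex refinement in the paper is obtained afterwards by an independent plumbing construction, not inside the proof of the Encoding Theorem.
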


Theorems \ref{t:main} and \ref{t:tech} imply that various other properties of finitely presented groups cannot be determined algorithmically. The properties that we shall focus on, beginning with the property $\widehat G\cong 1$ itself, are neither Markov nor co-Markov, so their undecidability cannot be established using the Adyan--Rabin method.
\smallskip

Some of the most profound work in group theory in recent decades concerns the logical complexity of (word-)hyperbolic groups. In that context, one finds undecidability phenomena associated to finitely generated subgroups but the logical complexity of hyperbolic groups themselves is strikingly constrained (see, for instance, \cite{sela_diophantine_2009} and \cite{kharlampovich_decidability_2013}).  Nevertheless, we {\em conjecture} that there does not exist an algorithm that can determine if a hyperbolic group has a non-trivial finite quotient (Conjecture \ref{conj: Hyperbolic undecidability}). This conjecture would be false if hyperbolic groups were all residually finite. Indeed, we shall prove (Theorem \ref{t:mainHyp}) that this conjecture is equivalent to the assertion that there exist hyperbolic groups that are not residually finite.

We shall also prove that, as it stands, Theorem \ref{t:main} allows one to establish various new undecidability phenomena for hyperbolic groups.  We recall some definitions. The \emph{first betti number} $b_1(\Gamma)$ of a group $\G$ is the dimension of $H_1(\G,\mathbb{Q})$ and the \emph{virtual first betti number}  $vb_1(\Gamma)$ is the (possibly infinite) supremum of $b_1(K)$ over all subgroups $K$ of finite index in $\Gamma$.  A group is  {\em large} if it has a subgroup of finite index that maps onto a non-abelian free group.  Note that if $\Gamma$ is large then $vb_1(\Gamma)=\infty$.

The following theorem summarizes our undecidability results for hyperbolic groups.

\begin{thmA}\label{t:hyp}
There do not exist algorithms that, given a finite presentation of a torsion-free hyperbolic group $\G$, can determine:
\begin{enumerate}
\item \label{i: largeness} whether or not $\G$ is large;
\item \label{i: vb} for any $1\leq d\leq\infty$, whether or not $vb_1(\G)\geq d$;
\item \label{i: finite linear rep} whether or not every finite-dimensional linear representation of $\G$ has finite image;
\item \label{i: finite linear rep fixed k} for a fixed infinite field $k$, whether or not  every finite-dimensional representation of $\G$
over $k$ has finite image;
\item \label{i: profinite rank} whether or not, for any fixed $d_0> 2$, the profinite completion of $\G$ can be generated (topologically) by a set of cardinality less than $d_0$.
\end{enumerate}
\end{thmA}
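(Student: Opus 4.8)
The plan is to reduce all five assertions to a single effective construction that transfers the undecidability supplied by Theorem~\ref{t:main} into the world of torsion-free hyperbolic groups. The key step I would isolate is a \emph{hyperbolic encoding proposition}: there is an algorithm that takes as input a finite presentation of a group $G$ and an integer $d_0\geq 3$, and outputs a finite presentation of a torsion-free hyperbolic group $\Gamma=\Gamma(G,d_0)$ with the following dichotomy:
\begin{itemize}
\item if $\widehat G\neq 1$, then $\Gamma$ is large and surjects onto $\Z^{d_0}$;
\item if $\widehat G= 1$, then $\Gamma$ is $2$-generated, has Kazhdan's property (T), and has no finite-dimensional linear representation of infinite image over any field that is not locally finite.
\end{itemize}
Feeding into this the recursive family $\{G_n\}$ of Theorem~\ref{t:main}, for which $\{n:\widehat{G_n}\neq 1\}$ is recursively enumerable but not recursive, produces for each $d_0$ a recursive family $\Gamma_n:=\Gamma(G_n,d_0)$ of torsion-free hyperbolic groups in which the dichotomy is governed by an undecidable condition.

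Granting the proposition, the five parts follow quickly. Since $\Gamma_n$ is large precisely when $\widehat{G_n}\neq 1$, and otherwise has property (T)---hence $vb_1(\Gamma_n)=0$ and $\Gamma_n$ is not large---parts \ref{i: largeness} and \ref{i: vb} follow for every $1\leq d\leq\infty$: the sets $\{n:\Gamma_n\text{ is large}\}$ and $\{n:vb_1(\Gamma_n)\geq d\}$ are non-recursive. For \ref{i: finite linear rep} and \ref{i: finite linear rep fixed k}: if $\widehat{G_n}\neq 1$ then $\Gamma_n$ has a finite-index subgroup $K$ surjecting $F_2$, and since $F_2$ embeds in $\SL_2(k)$ for every field $k$ that is not locally finite (a Sanov-type free pair works over $\Q$ and over $\mathbb F_p(t)$), inducing such a representation of $K$ up to $\Gamma_n$ yields a finite-dimensional representation of $\Gamma_n$ over $k$ of infinite image; if $\widehat{G_n}=1$ then, by the proposition, every finite-dimensional linear representation of $\Gamma_n$ over any such $k$ has finite image. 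Hence ``every finite-dimensional representation of $\Gamma_n$ has finite image'', and likewise its restriction to a fixed non-locally-finite field, define non-recursive subsets of $\N$ (over a locally finite field the property holds for every finitely generated group and so is decidable, which is why \ref{i: finite linear rep fixed k} has content precisely for non-locally-finite fields). Finally, for \ref{i: profinite rank}: when $\widehat{G_n}\neq 1$ the surjection $\Gamma_n\twoheadrightarrow\Z^{d_0}$ induces $\widehat{\Gamma_n}\twoheadrightarrow\widehat{\Z^{d_0}}$, so $\widehat{\Gamma_n}$ is not topologically generated by fewer than $d_0$ elements; when $\widehat{G_n}=1$ the group $\Gamma_n$ is $2$-generated, so $\widehat{\Gamma_n}$ is generated by $2<d_0$ elements; hence this property also defines a non-recursive set.

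The substance of the argument is the hyperbolic encoding proposition, and this is where the work lies. I would build $\Gamma(G,d_0)$ by a variant of the Rips construction, or of graphical small-cancellation theory, applied to $G$: one arranges a short exact sequence $1\to N\to\Gamma\to\bar G\to 1$ with $\Gamma$ torsion-free hyperbolic and $N$ finitely generated, where $\bar G$ is a modification of $G$ carrying $d_0$ extra homological generators with $\widehat{\bar G}=1\Leftrightarrow\widehat G=1$, and the relators encoding $\bar G$ in $\Gamma$ are designed so that each non-trivial finite quotient of $G$ is promoted---through the associated finite-index subgroup of $\Gamma$---to a quotient surjecting a non-abelian free group and to the surjection $\Gamma\twoheadrightarrow\Z^{d_0}$, while in the absence of any finite quotient of $G$ the construction instead ``tightens'' onto a $2$-generated small-cancellation group with property (T). The main obstacle is precisely the need for this \emph{conditional} behaviour: property (T), $2$-generation, and linearity-freeness cannot be imposed on $\Gamma$ once and for all, because when $\widehat G\neq 1$ the group $\Gamma$ is large and so has non-(T), linear quotients of unbounded rank---property (T) must \emph{emerge} exactly when $G$ has no finite quotient. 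Carrying this out requires a bespoke small-cancellation (or Dehn-filling) construction that internally detects the finite quotients of $G$, together with the delicate verifications that the ``small'' output genuinely has property (T) and kills every finite-dimensional linear representation (here exploiting that a finitely generated linear group is residually finite, so that an infinite-image representation of $\Gamma$ would, through the encoded copy of $G$, manufacture a non-trivial finite quotient of $G$) and that the ``large'' output carries the homology required for parts \ref{i: vb} and \ref{i: profinite rank}.
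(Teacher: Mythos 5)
Your reduction of the five items to the ``hyperbolic encoding proposition'' is fine as far as it goes, but the proposition itself --- which you correctly identify as carrying all the weight --- is not proved, and the route you sketch towards it contains two genuine obstructions. First, you ask for a construction in which property (T), $2$-generation and the absence of infinite linear representations \emph{emerge in $\Gamma$ itself} precisely when $\widehat G=1$. No such conditional small-cancellation/Dehn-filling construction is known, and you give no mechanism by which the presentation could ``internally detect'' the finite quotients of $G$; as you note, these properties cannot be imposed unconditionally, so this is exactly the hard case and it is left open. Second, the claim that $\widehat G\neq 1$ forces a surjection $\Gamma\to\Z^{d_0}$ cannot be arranged by adding ``homological generators'': if the only non-trivial finite quotients of $G$ are perfect, then free products of copies of $G$ (and any group surjecting onto them with rigid kernel) can have trivial abelianization, so largeness gives $vb_1=\infty$ but says nothing about $b_1(\Gamma)$ or, directly, about $\hat d(\Gamma)$ (free groups are large and have $\hat d=2$). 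So item (5) does not follow from your dichotomy even if the rest could be salvaged.

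The paper's proof avoids both problems by putting the rigidity in the \emph{kernel}, unconditionally, rather than in $\Gamma$ conditionally. One fixes once and for all a torsion-free non-elementary hyperbolic group $H$ with the relevant rigidity property --- property (T) for items (1)--(2) (a uniform lattice in ${\rm Sp}(n,1)$), no infinite-image linear representations over any field for items (3)--(4) (Kapovich's superrigid small-cancellation quotients), or simply $2$-generation for item (5) --- and applies the Belegradek--Osin form of the Rips construction to get $1\to N\to\Gamma_n\to G_n\to 1$ with $N$ a quotient of $H$. Since these properties pass to quotients, $N$ always has them, and then routine arguments show the relevant invariant of $\Gamma_n$ ($vb_1$, existence of infinite linear images, $\hat d$) is controlled by $G_n$: e.g.\ a finite-index subgroup $K<\Gamma_n$ with $b_1(K)>0$ must kill $N\cap K$ on abelianization, so $vb_1(\Gamma_n)>0$ forces $vb_1(G_n)>0$. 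For item (5) the lower bound on $\hat d$ comes not from a map onto $\Z^{d_0}$ but from a ``profinite Grushko'' lemma: if $\widehat G\neq 1$ then $\hat d(\bigast_{i=1}^n G)\geq\frac{59}{60}n$, proved by a wreath-product argument that handles the case of perfect finite quotients. I would encourage you to compare your conditional-(T) ansatz with this fixed-rigid-kernel strategy: the latter converts an apparently open problem into an application of existing black boxes.
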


Items (\ref{i: largeness}) and (\ref{i: vb}) are contained in Theorem \ref{thm: Hyperbolic largeness and vb_1}, items (\ref{i: finite linear rep}) and (\ref{i: finite linear rep fixed k}) are contained in Theorem \ref{thm: Hyperbolic linear representations}, and item (\ref{i: profinite rank}) is contained in Theorem \ref{thm: Hyperbolic profinite rank}.

We shall prove in Section \ref{s:profinite} that the profinite-rank problem described in item (\ref{i: profinite rank}) remains undecidable among residually-finite hyperbolic groups. In that  context, the bound $d_0>2$ is optimal, because the profinite rank of a residually finite group $\G$ is less than $2$ if and only if $\G$ is cyclic, and it is easy to determine if a hyperbolic group is cyclic.  Furthermore, Theorem \ref{t:mainHyp} tells us that for $d_0\le 2$, problem (\ref{i: profinite rank}) is decidable if and only if every hyperbolic group is residually finite. 
 
Item (\ref{i: largeness}) should be contrasted with the fact that there {\em does} exist an algorithm that can determine whether or not a finitely presented group maps onto a non-abelian free group: this is a consequence of Makanin's deep work on equations in free groups \cite{makanin_decidability_1984}.
 
Our final application is to the isomorphism problem for the profinite completions of groups.
The arguments required to deduce this from Theorem \ref{thm: Main theorem} are lengthy and somewhat technical, so we shall present them elsewhere \cite{bridson_isomorphism_2013}. 

\begin{thmA}
There are two recursive sequences of finite presentations for residually finite groups $A_n$ and $B_n$ together with monomorphisms $f_n:A_n\to B_n$ such that:
\begin{enumerate}
\item $\widehat{A}_n\cong\widehat{B}_n$ if and only if the induced map on profinite completions $\hat{f}_n$ is an isomorphism; and
\item the set $\{n\in\N\mid \widehat{A}_n\ncong\widehat{B}_n\}$
is recursively enumerable but not recursive.
\end{enumerate}
\end{thmA}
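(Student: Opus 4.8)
The plan is to deduce the theorem from Theorem~\ref{t:main} --- equivalently, from the Encoding Theorem~\ref{t:tech} --- by channelling the profinite-triviality phenomenon through a \emph{Platonov--Tavgen fibre-product construction}, in the refined form due to Bridson--Grunewald; the argument is long and is carried out in full in~\cite{bridson_isomorphism_2013}, so what follows is a sketch. First we extract from Theorems~\ref{t:main} and~\ref{t:tech} a recursive sequence of finite presentations $G_n$ --- indeed, by Theorem~\ref{t:covers}, fundamental groups of compact non-positively curved square complexes, hence of type $F$ --- for which $\{n\mid\widehat G_n\not\cong 1\}$ is recursively enumerable but not recursive. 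These $G_n$ cannot be used as the $A_n$ directly: when $\widehat G_n\cong 1$ and $G_n\neq 1$ the group $G_n$ has \emph{no} non-trivial finite quotient, so it is as far from residually finite as possible. The fibre-product machine is precisely the device for transplanting such behaviour into residually finite groups while preserving finite presentability and recursiveness.

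The first step would be a homological and algorithmic pre-processing: from $G_n$ build a recursive sequence $Q_n$ of finite presentations of groups of type $F$ that are \emph{super-perfect} ($H_1(Q_n;\Z)=H_2(Q_n;\Z)=0$), satisfy $\widehat Q_n\cong 1\Leftrightarrow\widehat G_n\cong 1$, and have $\widehat Q_n$ infinite whenever it is non-trivial. One obtains $Q_n$ from $G_n$ by a controlled sequence of HNN extensions and amalgamated free products over finitely generated free subgroups --- of the kind already used in the discrete setting (cf.\ Section~\ref{s:strategy}), which preserve type $F$ --- chosen to kill $H_1$ and $H_2$ and to force the profinite completion to be infinite when non-trivial, while visibly introducing no \emph{new} finite quotients (the ambient groups being taken to have trivial profinite completion). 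One then applies Wise's residually finite version of the Rips construction to each $Q_n$, obtaining short exact sequences $1\to N_n\to\Gamma_n\xrightarrow{p_n}Q_n\to 1$ in which $\Gamma_n$ is word-hyperbolic and residually finite and $N_n$ is finitely generated. Everything here is uniform in $n$, so $n\mapsto(\Gamma_n,N_n,p_n)$ is recursive.

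Next we would form the fibre products. Put $B_n:=\Gamma_n\times\Gamma_n$ and $A_n:=P_n=\{(x,y)\in\Gamma_n\times\Gamma_n\mid p_n(x)=p_n(y)\}$, with $f_n\colon A_n\hookrightarrow B_n$ the inclusion. Then $B_n$ is finitely presented and residually finite, $A_n$ is residually finite as a subgroup of $B_n$, and --- since $N_n$ is finitely generated, $\Gamma_n$ is finitely presented and $Q_n$ is of type $F$ (hence $F_3$) --- the 1--2--3 Theorem of Baumslag--Bridson--Miller--Short shows that $P_n$ is finitely presented, with a presentation computable from the data; so $n\mapsto(A_n,B_n,f_n)$ is recursive. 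The Platonov--Tavgen argument (in the form refined by Bridson--Grunewald: injectivity of $\hat f_n$ follows from super-perfection of $Q_n$, and surjectivity is equivalent to $\widehat Q_n\cong 1$) then shows that $\hat f_n\colon\widehat A_n\to\widehat B_n$ is an isomorphism if and only if $\widehat Q_n\cong 1$, i.e.\ if and only if $\widehat G_n\cong 1$. This gives the ``if'' half of (1); and, granted (1) in full, it gives (2), since $\{n\mid\widehat A_n\ncong\widehat B_n\}=\{n\mid\widehat G_n\not\cong 1\}$ is recursively enumerable --- one enumerates the finite permutation quotients of a finitely presented group --- but not recursive by Theorem~\ref{t:main}.

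The hard part, and the only point of (1) that remains, is to show that if $\widehat Q_n\not\cong 1$ then $\widehat A_n\ncong\widehat B_n$ \emph{as abstract profinite groups}. This is delicate precisely because super-perfection of $Q_n$ was imposed in order to make $A_n$ and $B_n$ share all of their finite nilpotent quotients, so that coarse invariants such as $H_1(-;\mathbb{F}_p)$ cannot distinguish $\widehat A_n$ from $\widehat B_n$. Here the normalization that $\widehat Q_n$ be infinite does the work: $\widehat A_n$ then sits inside $\widehat B_n=\widehat\Gamma_n\times\widehat\Gamma_n$ as a closed \emph{normal} subgroup of infinite index, with quotient $\widehat Q_n$, and an abstract isomorphism $\widehat A_n\cong\widehat B_n$ would exhibit $\widehat\Gamma_n\times\widehat\Gamma_n$ as an extension of the infinite profinite group $\widehat Q_n$ by a group isomorphic to $\widehat\Gamma_n\times\widehat\Gamma_n$ itself. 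Iterating this and carrying out a Goursat-type analysis of the closed normal subgroups of $\widehat\Gamma_n\times\widehat\Gamma_n$ --- for which one must build suitable control of the normal-subgroup structure of $\widehat\Gamma_n$ into the choice of $\Gamma_n$ in the Rips construction --- then forces a contradiction. I expect this rigidity input, together with the bookkeeping needed to make all the normalizations of the first step simultaneously correct and algorithmic, to be the main obstacle; the remainder is routine.
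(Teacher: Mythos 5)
This theorem is not proved in the present paper: the authors state explicitly that the deduction from Theorem \ref{t:main} is ``lengthy and somewhat technical'' and is given in the companion paper \cite{bridson_isomorphism_2013}. Your architecture --- superperfect normalization of the groups supplied by Theorem \ref{t:main}, the residually finite Rips construction, fibre products $A_n=P_n\le B_n=\Gamma_n\times\Gamma_n$ made finitely presentable via the 1--2--3 Theorem, and the Platonov--Tavgen/Bridson--Grunewald criterion showing that $\hat f_n$ is an isomorphism exactly when $\widehat Q_n\cong 1$ --- is indeed the strategy of that companion paper, and your reductions of statement (2) and of the ``if'' half of (1) to this criterion are sound.

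However, as you yourself flag, the proposal does not prove the ``only if'' half of (1), namely that $\widehat Q_n\ncong 1$ forces $\widehat A_n\ncong\widehat B_n$ as abstract profinite groups, and the route you sketch for it fails at the first step. The fibre product $P_n=\{(x,y)\mid p_n(x)=p_n(y)\}$ is \emph{not} normal in $\Gamma_n\times\Gamma_n$: modulo $N_n\times N_n$ it is the diagonal copy of $Q_n$ in $Q_n\times Q_n$, which is normal only when $Q_n$ is abelian --- and super-perfection rules that out for $Q_n\neq 1$. Consequently the closure of $A_n$ in $\widehat B_n$ is not a closed normal subgroup with quotient $\widehat Q_n$, so the proposed picture of $\widehat B_n$ as an extension of the infinite profinite group $\widehat Q_n$ by a copy of $\widehat B_n$, to be iterated and subjected to a Goursat-type analysis, does not get off the ground. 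Distinguishing $\widehat A_n$ from $\widehat B_n$ abstractly is precisely the hard content deferred to \cite{bridson_isomorphism_2013}, and it requires a mechanism different from the one you outline, together with further normalizations built into the choice of the $\Gamma_n$. There are also smaller debts left unpaid --- chiefly an algorithmic construction of superperfect $Q_n$ with $\widehat Q_n$ infinite-or-trivial that provably introduces no new finite quotients --- but the normality error is the substantive gap.
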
 

This paper is organised as follows. In Section \ref{s:slobo} we explain what we need from Slobodskoi's work. In Section \ref{s:strategy} we lay out our strategy for proving Theorem \ref{t:tech}, establishing the notation to be used in subsequent sections and, more importantly, providing the reader with an overview that should sustain them through the technical arguments in Sections \ref{s:omni} and \ref{s: Malnormal subgroups}.  Theorem \ref{t:tech} is proved in Section \ref{s:proof} and, with Slobodskoi's construction in hand, Theorem \ref{t:main} follows immediately. Sections \ref{s: Malnormal subgroups} and \ref{s:proof} form the technical heart of the paper. Many of the arguments in these sections concern malnormality for subgroups of virtually free groups. The techniques here are largely topological, involving the careful construction of coverings of graphs (and, implicitly, graphs of finite groups) and the analysis of fibre products in the spirit of John Stallings \cite{stallings_topology_1983}.

 In Section \ref{s:npc} we prove that the existence of finite quotients remains undecidable in 
the class of non-positively curved square complexes. Section \ref{s:profinite} deals with profinite rank, 
and the remaining results about hyperbolic groups are proved in Section \ref{s:hyp}.

\subsection*{Acknowledgements.} We first tried to prove Theorem \ref{t:main} at the urging of Peter Cameron, who was interested in its implications for problems in combinatorics \cite{cameron_extending_2004,bridson_undecidability_2014}; we are grateful to him for this impetus. We thank Jack Button and Chuck Miller for stimulating conversations about Theorem \ref{t:main} and its consequences.  Finally, we are grateful for the insightful comments of the anonymous referee.

\section{Slobodskoi's theorem}\label{s:slobo}

In this section we explain how the following theorem is contained in 
Slobodskoi's work on the universal theory of finite groups
\cite{slobodskoi_undecidability_1981}. We write $F(A)$ to denote the free group on a set $A$.

\begin{theorem}\label{thm: Slobodskoi}
There exists a finitely presented group $G\cong\<A\mid R\>$ in which there is no algorithm to
decide which elements have trivial image in every finite quotient. More precisely,
the set of reduced words
\[
\{w\in F(A)\mid w\neq_{\widehat{G}} 1\}
\]
 is recursively enumerable but not recursive.
\end{theorem}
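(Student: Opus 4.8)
The plan is to derive Theorem \ref{thm: Slobodskoi} from Slobodskoi's theorem on the undecidability of the universal theory of finite groups by an explicit reduction, packaging his word-problem-type construction into a single finitely presented group. Recall that Slobodskoi \cite{slobodskoi_undecidability_1981} produces a finite system of group-theoretic data for which there is no algorithm to decide which positive/universal sentences hold in all finite groups; equivalently, working with a fixed finite set of ``letters'' and ``relations'', he exhibits a recursively enumerable but non-recursive set of words that are killed in every finite group subject to those relations. The first step is to extract from his paper a finitely presented group $\Gamma=\langle A_0\mid R_0\rangle$ together with a recursive sequence of words $(u_n)_{n\in\N}$ in $F(A_0)$ such that the set $\{n\mid u_n=_{\widehat\Gamma}1\}$ is non-recursive; this is essentially a matter of reading his construction through the dictionary ``trivial in every finite quotient $\leftrightarrow$ provable from the relations in the universal Horn theory of finite groups''. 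I would state this carefully, taking care that the sequence $(u_n)$ is genuinely recursive (Slobodskoi's encoding of a universal Turing machine or a Minsky machine guarantees this).

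Next I would assemble a single group $G$ out of $\Gamma$ and the sequence $(u_n)$, so that the decision problem for the sequence becomes the decision problem for individual elements of $G$. The natural device is to take an HNN-type or direct-product-of-amalgams construction: introduce a new generator $t$ (or a family of stable letters) and new relations that, in any finite quotient of $G$, force the image of a suitable test word $w_n$ built from $u_n$, $t$ and $A_0$ to be trivial exactly when $u_n$ is already trivial in every finite quotient of $\Gamma$. Concretely, one can encode the index $n$ in the exponent of $t$, e.g.\ use words of the form $w_n = [t^n u_n t^{-n}, a]$ for a fixed $a\in A_0$, or build an ascending HNN extension whose associated subgroups are chosen so that residual finiteness of $G$ restricted to these words mirrors residual finiteness of $\Gamma$ restricted to the $u_n$. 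The point is that $G$ is a single explicitly given finitely presented group, while the map $n\mapsto w_n$ is recursive, so that $\{w\in F(A)\mid w\neq_{\widehat G}1\}$ non-recursive follows from $\{n\mid u_n\neq_{\widehat\Gamma}1\}$ non-recursive. Recursive enumerability is automatic and easy: one enumerates all finite groups, all tuples of elements satisfying $R$, and checks whether $w$ maps to a non-identity element — this semi-decides $w\neq_{\widehat G}1$.

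The main obstacle, and the step requiring genuine care, is verifying the equivalence $w_n=_{\widehat G}1 \Leftrightarrow u_n=_{\widehat\Gamma}1$ for the chosen encoding. The forward implication is typically immediate from functoriality of profinite completion (any finite quotient of $\Gamma$ extends, or pulls back, to one of $G$ killing $w_n$ whenever $u_n$ survives), but the reverse implication — that killing $w_n$ in a finite quotient of $G$ really forces $u_n$ to die in a finite quotient of $\Gamma$ — depends delicately on how the stable letters interact with the relations, and this is exactly where one must be faithful to the fine structure of Slobodskoi's group rather than treating it as a black box. In practice it may be cleanest not to build a new $G$ at all but to observe that Slobodskoi's construction already yields a single finitely presented group with the stated property, reading the non-recursiveness of $\{w\mid w\neq_{\widehat G}1\}$ directly off his reduction from the halting problem; the ``obstacle'' then becomes purely expository, namely translating his logical formulation into the profinite-completion language used here. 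I would present the argument in whichever of these two forms keeps the dependence on \cite{slobodskoi_undecidability_1981} most transparent, flagging explicitly which facts are quoted from that paper and which are the short reductions supplied here.
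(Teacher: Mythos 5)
Your second suggested route --- take Slobodskoi's group as it stands and translate his logical statement into the profinite language --- is exactly what the paper does, and it is the right choice. But you leave precisely that translation unexecuted, and it is not ``purely expository'': the missing ingredient is that Slobodskoi does not hand you a single non-recursive set of words. What his construction provides is a pair of \emph{recursively inseparable} sets $S_0,S_1\subseteq\N$ (coming from a Minsky machine: $k\in S_0$ iff the machine halts on input $2^k$, $k\in S_1$ iff it visits the leftmost square of a tape infinitely often), together with a recursive rule $k\mapsto (w_1^{(k)},w_2^{(k)})$ and two lemmas: if $k\in S_1$ then both words die in every finite quotient of $G$, and if $k\in S_0$ then at least one survives in some finite quotient. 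The paper then argues: $\{w\mid w\neq_{\widehat G}1\}$ is r.e.\ by searching over finite quotients; if its complement were also r.e., the set $D=\{k\mid w_1^{(k)}=_{\widehat{G}}w_2^{(k)}=_{\widehat{G}}1\}$ would be recursive, and $D$ separates $S_1$ from $S_0$, contradicting recursive inseparability. Note that $D$ need not coincide with $S_1$ or with the complement of $S_0$, so one genuinely needs inseparability rather than non-recursiveness of either set. Your paraphrase of Slobodskoi (``a recursively enumerable but non-recursive set of words that are killed in every finite group'') also has the r.e.\ side the wrong way round: the set of words that die in $\widehat G$ is exactly the one that fails to be r.e.

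Your first route --- assembling a new group by HNN-type constructions with test words such as $w_n=[t^nu_nt^{-n},a]$ --- should be abandoned. The step you flag as delicate (that the triviality of $w_n$ in every finite quotient of the new group is equivalent to the triviality of $u_n$ in every finite quotient of $\Gamma$) is not a technicality: controlling finite quotients through HNN extensions and amalgams is the central difficulty of this entire paper, and it is what the Encoding Theorem, omnipotence and malnormality machinery are for. Invoking such a reduction to prove this preliminary theorem would be unsupported in substance and circular in spirit. Fortunately it is also unnecessary, since Slobodskoi's group already serves as the single finitely presented $G$ required.
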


The theorem that Slobodskoi actually states in \cite{slobodskoi_undecidability_1981} is the following.

\begin{theorem}[\cite{slobodskoi_undecidability_1981}]\label{thm: Universal theory}
The universal theory of finite groups is undecidable.
\end{theorem}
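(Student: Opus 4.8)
My plan is to deduce Theorem \ref{thm: Universal theory} from Theorem \ref{thm: Slobodskoi} via the standard duality between satisfiability in finite groups and validity over them. Negating a universal sentence $\forall\bar x\,\varphi(\bar x)$ (with $\varphi$ quantifier-free) to produce the existential sentence $\exists\bar x\,\neg\varphi$ gives a computable bijection between universal sentences and existential sentences, under which $\forall\bar x\,\varphi$ belongs to the universal theory of finite groups if and only if $\exists\bar x\,\neg\varphi$ is satisfiable in no finite group. Hence the universal theory is recursive if and only if the set $E$ of existential sentences that are satisfiable in some finite group is recursive; and since $E$ is plainly recursively enumerable --- list all finite groups by order via their multiplication tables, and for each one check the quantifier-free matrix against every tuple of its elements --- it suffices to show that $E$ is not recursive.

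To that end I would reduce to $E$ the set $\{w\in F(A)\mid w\neq_{\widehat{G}}1\}$, which is not recursive by Theorem \ref{thm: Slobodskoi}; here $G=\langle A\mid R\rangle$ is the fixed finitely presented group furnished by that theorem. Write $A=\{a_1,\dots,a_m\}$, let $\bar x=(x_1,\dots,x_m)$, and for $u\in F(A)$ let $u(\bar x)$ be the term obtained from $u$ by substituting $x_i$ for each occurrence of $a_i$. Send $w\in F(A)$ to the existential sentence
\[
\tau_w\ :=\ \exists x_1\cdots\exists x_m\,\Big(\ \bigwedge_{r\in R}r(\bar x)=1\ \ \wedge\ \ w(\bar x)\neq 1\ \Big),
\]
which is clearly computable from $w$. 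A tuple $(g_1,\dots,g_m)$ in a finite group $F$ satisfying the matrix of $\tau_w$ is precisely a homomorphism $G\to F$ with $a_i\mapsto g_i$ --- the equations $r(\bar g)=1$ being exactly what is needed for this assignment to extend over $G$ --- under which $w$ has non-trivial image. Thus $\tau_w\in E$ if and only if $w$ survives in some finite quotient of $G$, i.e. if and only if $w\neq_{\widehat{G}}1$. Since the latter set is not recursive, neither is $E$, and hence neither is the universal theory of finite groups.

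The one genuinely substantial ingredient is Theorem \ref{thm: Slobodskoi}, and thus Slobodskoi's construction behind it; any self-contained proof of Theorem \ref{thm: Universal theory} must confront the same difficulty, so that is where I expect the real obstacle to lie. The obstruction is that a finitely presented \emph{residually finite} group has solvable word problem, so for such a group ``dies in every finite quotient'' coincides with ``is trivial'' and no residually finite group can supply undecidability. One is forced to build a finitely presented group that is wildly non-residually-finite, yet whose finite quotients remain rich and structured enough to witness the halting of a Turing machine --- arranging that a suitable word $w_n$ survives in some finite quotient exactly when machine $T$ halts on input $n$, with no spurious finite quotient letting $w_n$ survive otherwise --- and then to verify that the encoding is faithful in both directions. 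That engineering is the delicate core of Slobodskoi's theorem, and where essentially all of the work lies.
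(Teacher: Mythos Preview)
Your argument is correct: the reduction $w\mapsto\tau_w$ is exactly the standard encoding of ``$w$ survives in some finite quotient of $G$'' as an existential sentence in the language of groups, and it shows that decidability of the universal theory of finite groups would yield decidability of the set $\{w\mid w\neq_{\widehat G}1\}$, contradicting Theorem~\ref{thm: Slobodskoi}.

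The paper, however, does not prove Theorem~\ref{thm: Universal theory} in this direction. It treats Theorem~\ref{thm: Universal theory} as Slobodskoi's result, quoted verbatim from \cite{slobodskoi_undecidability_1981}, and then \emph{extracts} Theorem~\ref{thm: Slobodskoi} from the internal machinery of Slobodskoi's proof (the recursively inseparable sets $S_0,S_1$ and the sentences $\Psi(k)$). So the logical flow in the paper is the reverse of yours: Slobodskoi's construction $\Rightarrow$ Theorem~\ref{thm: Universal theory} (as originally stated) and, by inspection of the same construction, $\Rightarrow$ Theorem~\ref{thm: Slobodskoi}. Your sentence $\tau_w$ is, up to negation and the use of one word rather than two, exactly the shape of Slobodskoi's $\Psi(k)$, so the two routes meet at the same encoding; you have simply run the implication back the other way. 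This is perfectly legitimate, and you correctly flag the one subtlety: in the context of the paper your deduction is formally circular, since Theorem~\ref{thm: Slobodskoi} is itself mined from Slobodskoi's proof of Theorem~\ref{thm: Universal theory}. As a free-standing observation that the two undecidability statements are inter-reducible by an elementary syntactic translation, though, your argument is clean and complete.
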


Slobodskoi's proof of Theorem \ref{thm: Universal theory} is clear and explicit. It revolves around a finitely
presented group $G=\<a_1,\dots,a_n\mid r_1, \dots,r_m\>$ 
that encodes the workings of a 2-tape Minsky machine $M$ that computes a partially recursive function.  Associated to this machine one has a disjoint pair of subsets $S_0, S_1\subseteq \mathbb{N}$ (denoted $X$ and $Y$ in \cite{slobodskoi_undecidability_1981}) that are \emph{recursively inseparable}: $S_0$ is the set of natural numbers $k$
such that $M$ halts on input $2^k$ and $S_1$ is the set of $k$ such
that on input $2^k$ the machine $M$ visits the leftmost square of at least one of its tapes infinitely often. To say that they are
recursively inseparable means that there does not exist a recursive set $D\subseteq\N$ such that $S_1\subseteq D$ and $S_0\cap D=\emptyset$.

By means of a simple recursive rule, Slobodskoi defines two sequences of words $w_1^{(k)}, w_2^{(k)}\ (k\in\N)$ in the letters $A^{\pm 1}$. He then considers the following sentences in the first-order logic of groups.
$$
\Psi(k) \equiv \forall a_1,\dots,a_n [(r_1 \neq 1)\vee \dots \vee (r_m\neq 1)
\vee (w_1^{(k)}=w_2^{(k)}=1)]
$$
Note that the sentence $\Psi(k)$ is {\em{false}} in a group $\G$ if and only if
there is a homomorphism $\phi:G\to\G$ such that at least
one of $\phi(w_1^{(k)})$
or $\phi(w_2^{(k)})$ is non-trivial.  In particular, $\Psi(k)$ is false in some finite group $\G$ if and only if either $w_1^{(k)}\neq_{\widehat{G}}1$ or $w_2^{(k)}\neq_{\widehat{G}}1$.

Slobodskoi proves  that if $k\in S_1$ then $\Psi(k)$ is true in every periodic group (in particular every finite group) \cite[Lemma 6]{slobodskoi_undecidability_1981}. He then proves that  if $k\in S_0$ then $\Psi(k)$ is false in some finite group \cite[Lemma 7]{slobodskoi_undecidability_1981}. 

\begin{proof}[Proof of Theorem \ref{thm: Slobodskoi}]
Let $G=\<A\mid R\>$ be the group constructed by Slobodskoi.
The set $\{w\in F(A)\mid w\neq_{\widehat{G}} 1\}$ is   recursively enumerable: a naive search will eventually find a finite quotient of $G$ in which $w$ survives, if one exists. If the complement $\{w\in F(A)\mid w=_{\widehat{G}} 1\}$ were recursively enumerable, then the set
\[
D= \{k\in\mathbb{N}\mid w_1^{(k)}=_{\widehat{G}}w_2^{(k)}=_{\widehat{G}}1\}
\]
would be recursive.  But $S_1\subseteq D$ 
and $S_0\subseteq \mathbb{N}\smallsetminus D$, 
so this would contradict the fact that $S_0$ and $S_1$ are recursively inseparable.
\end{proof}

\begin{remark}
Kharlampovich proved an analogue of Slobodskoi's theorem for the class of finite nilpotent groups \cite{kharlampovich_universal_1983}.
\end{remark}

\begin{remark}
It follows easily from Theorem \ref{thm: Slobodskoi} and the Hopfian property of finitely generated profinite groups that there does not exist an algorithm that, given two finite presentations, can determine if the profinite completions of the groups presented are isomorphic or not.  It is much harder to prove that the isomorphism problem remains unsolvable if one restricts to completions of finitely presented, residually finite groups \cite{bridson_isomorphism_2013}.
\end{remark}

\section{A strategy for proving Theorem \ref{t:tech}}\label{s:strategy}

In this section we lay out a strategy for proving our main technical result, Theorem \ref{t:tech}.
It is useful to think of Theorem \ref{t:tech} as a machine that, given a word in the seed group $G$,
produces a group $G_w$ so that the (non)triviality of $w\in \wh G$ is translated into the (non)triviality 
of the profinite completion $\wh G_w$.  Although the techniques required to prove this are quite different from the arguments
used to prove the corresponding result for discrete groups (which are straightforward from a modern perspective),
the broad outline of the proof in that setting will serve us well as a framework on which to hang various technical results. 
The notation established here will be used consistently in later sections.

\subsection{The discrete case}\label{ss:disc}
We fix a finitely presented group $G=\langle A\mid R\rangle$ and seek an algorithm that, given a word $w\in F(A)$, will produce a finitely presented group
$G^w$ so that $G^w\cong\{1\}$ if  $w=_G1$ and  $G\hookrightarrow G^w$
if $w\neq_G1$.
The first such algorithm was described by Adyan \cite{adyan_algorithmic_1955,adyan_unsolvability_1957} and Rabin \cite{rabin_recursive_1958}. There are many ways to vary the construction; cf.~\cite{miller_iii_decision_1992}.

\medskip

Replacing $G$ by $G*\langle a_0\rangle$ and  $a\in A$ by $a'= aa_0$, if necessary,
we may assume that $A=\{a_0,\dots,a_m\}$ where each $a_i$ has infinite order.
And replacing $w$ by $[w,a_0]$, we may assume that if $w$ is non-trivial in $G$ then it has infinite order.

Let $
G_1=G*\langle b_0,\ldots, b_m\rangle/\langle\!\langle w^{b_i}=a_i\mid i=0,\ldots, m\rangle\!\rangle
$ and let  $G_2=G_1*\langle b_{m+1}\rangle$.  Note that if $w=1$ then $G_2$ is freely generated by the $b_i$, whereas if $w\neq 1$ then $G_2$ is a multiple HNN extension of $G$ with stable letters $b_i$, whence the natural map $G\to G_2$ is injective.  Choose $m+2$ words that freely generate a 
subgroup of the normal closure of $w\in F(w,\,b_{m+1})$, say
$
c_j=(w^{b_{m+1}})^{j+1}w(w^{b_{m+1}})^{-1-j}.
$
Define $F_0:= \langle b_0,\ldots, b_m\rangle<G_1$, and further define subgroups of $G_2$ by
$$F_1:=\langle b_0,\ldots , b_{m+1}\rangle \ \ 
F_2:=\langle c_0,\ldots, c_{m+1}\rangle\ \ F:=\langle F_1,F_2 \rangle.
$$
The subgroup $F_1 < G_2$  is free of rank $m+2$.
If $w=_G 1$ then $F_2<G_2$ is trivial.
If $w\neq_G 1$ then  
 $F_2$ is free of rank $m+2$ and $F=F_1\ast F_2$ is the free product. 

We take two copies $G_2$ and $G_2'$ of $G_2$ and distinguish the elements and subgroups of $G_2'$ by primes.
Define $G^w$ to be the quotient of $G_2*G'_2$ by the relations
\[
\{c_i=b'_i, b_i=c'_i\mid i=0,\ldots,m+1\}~.
\]
If $w=_G1$ then $G^w\cong 1$.  If $w\neq_G 1$ then $G^w$ is an amalgamated product
\[
G_2*_{F\cong F'} G'_2
\]
where the isomorphism $F\cong F'$ identifies $F_1$ with $F_2'$ and $F_2$ with $F_1'$.
In particular, 
the natural map $G\to G_2\to G^w$ is injective and  $G^w\ncong 1$.

\subsection{The profinite case}
Given  $G=\<A\mid R\>$ and   $w\in F(A)$, we have to construct, in an algorithmic manner, a finite presentation for a group $G_w$ so that $\wh{G}_w=1$ if and only if $w=_{\wh{G}}1$. The difficult thing to arrange is that $G_w$ must have some non-trivial finite quotient if $w\neq_{\wh{G}}1$.

\begin{remark}
Of the many problems one faces in adapting the preceding argument to the profinite setting, the most fundamental concerns our use of HNN (equivalently, Bass--Serre) theory to see that the natural map $G\to G^w$  is injective if $w\neq 1$. Sobering examples in this connection are the {\em simple} groups of Burger and Mozes \cite{burger_finitely_1997}: these are amalgamations $L_1\ast_{\Lambda_1\cong \Lambda_2} L_2$ where $L_1\cong L_2$ is a finitely generated free group and $\Lambda_i<L_i$ is a subgroup of finite index. (Earlier examples in a similar vein were given by Bhattacharjee \cite{bhattacharjee_constructing_1994} and Wise \cite{wise_complete_2007}.)
\end{remark}

\subsection*{Step 1: controlling the order of the generators $a_i$ and of $w$}

What matters now is the order of $a_i$ and $w$ in finite quotients of $G$. In order to
retain enough finite quotients after performing the HNN extensions in step 2, we must ensure that 
if $w\neq 1$ in $\wh{G}$ then $w$ and the generators all have the same order in {\em some} finite quotient of $G$
(or a proxy of $G$).
It will transpire that in fact we need significantly  more control than
this. This control is established in Section \ref{s:omni}, where the key result is Theorem \ref{thm: Strengthened omnipotence}. 

\subsection*{Step 2: a map $G\to \wh{G}_1$ whose image is trivial iff $w=1$} 
We define $G_1$ as above, making the $a_i$ conjugate to $w$. If $w=_{\wh{G}}1$   then
$G\to \wh{G}_1$ is trivial and $\wh{G}_1$ is the profinite completion of the free group
$F_0$ on the stable letters $b_i$. When $w\neq_{\wh{G}}1$, we obtain finite quotients of $G_1$
in which $w$ survives. But this is not enough: for reasons that will become apparent in step 4, we have to work hard to find virtually free quotients $\eta:G_1\to\Gamma_0$ where $F_0$ injects and is {\em malnormal}.
 
We remind the reader that a subgroup $H<G$ is termed {\em malnormal} if $g^{-1}Hg\cap H=1$ for all $g\notin H$. This is the central concept of Section \ref{s: Malnormal subgroups} and continues to be a major focus in Section \ref{s:proof}.

\subsection*{Step 3: the construction of $\Gamma$ and $F$}
In $G_2=G_1\ast\<b_{m+1}\>$ we have
 to demand far more of the subgroup $F_2$ than in the discrete case. Consequently, a much more
subtle construction of the elements $c_i$ is required, and this is 
the subject of Section \ref{s: Malnormal subgroups}.  If $w=_{\wh{G}}1$ then
$F_2$ is trivial
in every finite quotient of $G_2$. If $w\neq_{\wh{G}}1$ 
then $F_1\cong F_2$ and $F\cong F_1\ast F_2$ injects into a virtually free quotient 
$\Gamma$ of $G_2$
(Lemma \ref{lem: Free product}) where it  is {\em malnormal} (Proposition \ref{prop: Malnormality of F}).

\subsection*{Step 4}

With our more sophisticated definition of $c_i$ and $F$ in hand, we  define
$G_{w}$ to be the quotient of $G_2*G'_2$ by the relations
\[
\{c_i=b'_i, b_i=c'_i\mid i=0,\ldots,m+1\}~.
\]
It is clear that $\wh{G}_w=1$ if $w=_{\wh{G}}1$. If $w\neq_{\wh{G}}1$, then 
$G_w$  maps onto $\G\ast_{{F}\cong{F}'}\G'$; as a malnormal amalgamation of virtually free groups, this is residually finite, by a theorem of Wise \cite{wise_residual_2002}.  

\medskip

\begin{remarks}

(1) A crucial feature of the above process is that each step is algorithmic: judicious choices
are made, but these choices depend in an algorithmic manner on the parameter $w$ alone.
In particular, the algorithm gives
an explicit finite presentation for $G_w$.

(2) The definition of $G_w$ makes no assumption about the existence or nature of the finite quotients of
$G$ in which $w$ has non-trivial image.
Equally, the proof that $G_w$ has a non-trivial finite quotient if $w\neq_{\wh{G}}1$
requires only the {\em existence} of a finite quotient in which $w$ has non-trivial image;
it does not require any knowledge about the nature of such a quotient.
\end{remarks}

\section{A strengthening of omnipotence}\label{s:omni}

The main result of this section (Theorem \ref{thm: Strengthened omnipotence})
strengthens Wise's theorem on the {\em omnipotence} of free
groups \cite{wise_subgroup_2000}.

Given a virtually free group $\G$ and a finite list of elements $\g_1,\dots, \g_n\in\G$, we would  like to control the (relative) orders of these elements in finite quotients of $\G$. Ideally, we would like to dictate orders arbitrarily, but this is too much to expect. For example, if $\g_1$ and $\g_2$ have conjugate powers in $\G$, then the possible orders for the image of $\g_2$ are constrained by those of $\g_1$. To isolate this problem, we make the following definition.

\begin{definition}\label{def: Independent}
Let $\G$ be a group. Elements $\g_1,\g_2\in \G$ of infinite order are said to be {\em{independent}} if  no non-zero power of $\g_1$ is conjugate to a non-zero power of $\g_2$. An $m$-tuple $(\gamma_1,\ldots,\gamma_m)$ of elements from $\G$ is  \emph{independent} if $\g_i$ and $\g_j$ are independent whenever $1\le i < j\le m$.
\end{definition}

The next definition 
makes precise the idea that the orders of independent sets of elements can be controlled in finite quotients. 

\begin{definition}\label{d:omnip}
A  group $\Gamma$ is \emph{omnipotent} if, for every $m\ge 2$ and every independent $m$-tuple $(\gamma_1,\ldots,\gamma_m)$ of elements in $\G$,   there exists a positive integer $\k$ such that, for every $m$-tuple of natural numbers $(e_1,\ldots,e_m)$ there is a homomorphism to a finite group
\[
q:\Gamma\to Q
\]
such that $o(q(\gamma_i))=\k e_i$ for $i=1,\dots,m$.
\end{definition}

The preceding definitions are due to Wise \cite{wise_subgroup_2000}, who proved that free groups are omnipotent.
Bajpai extended this to surface groups \cite{bajpai_omnipotence_2007}, and the second author proved that all Fuchsian groups are omnipotent \cite{wilton_virtual_2010}.  It follows from Wise's recent deep work on special cube complexes (specifically, from the Malnormal Special Quotient Theorem \cite{wise_structure_2012}), that virtually special groups are omnipotent.  In particular, virtually free groups are known to be omnipotent.  However, we do not want to obscure our current setting with the extra complications of special cube complexes and, more importantly, Wise's method of proof does not provide the additional strengthening contained in item (2) of the following theorem. This refinement is a vital component of the strategy described in the previous section:
it will be needed to establish malnormality in Lemma \ref{lem: Malnormality of F_0} and Proposition \ref{prop: Malnormality of F}.

\begin{theorem}\label{thm: Strengthened omnipotence}
Let $\Gamma$ be a virtually free group and let $(\gamma_1,\ldots,\gamma_m)$ be an independent $m$-tuple of elements of $\Gamma$.  There is a positive integer $\kappa$ such that, for every $m$-tuple of positive integers $(e_1,\ldots,e_m)$, there is a homomorphism to a finite group
\[
q:\Gamma\to Q
\]
such that:
\begin{enumerate}
\item $o(q(\gamma_i))=\k e_i$ for $i=1,\dots,m$; and,
\item furthermore, $\langle q(\gamma_i)\rangle\cap\langle q(\gamma_j)\rangle=1$ whenever $i\neq j$.
\end{enumerate}
\end{theorem}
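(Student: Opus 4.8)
The plan is to deduce Theorem~\ref{thm: Strengthened omnipotence} from the (known) omnipotence of virtually free groups by a two-stage argument: first produce a finite quotient realizing the prescribed orders $\k e_i$ as in Definition~\ref{d:omnip}, and then pass to a further finite quotient that additionally separates the cyclic subgroups $\langle q(\g_i)\rangle$. The point of the refinement in item~(2) is that although the $\g_i$ are independent (no conjugate powers coincide), their \emph{images} in a finite quotient can accidentally have a common power, and we must rule this out.

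First I would fix a finite-index free subgroup $F\trianglelefteq\G$ and use a standard trick to replace each $\g_i$ by a power lying in $F$ (carefully, so as not to destroy independence or the control over orders — this is why one works with relative orders $\k e_i$ rather than absolute ones). Then apply the omnipotence of virtually free groups to get $\k$ and, for a given tuple $(e_1,\dots,e_m)$, a homomorphism $q_0:\G\to Q_0$ with $o(q_0(\g_i))=\k e_i$. The subgroups $C_i:=\langle q_0(\g_i)\rangle$ are finite cyclic, and the potential failure is that $C_i\cap C_j\neq 1$ for some $i\neq j$. The key observation is that, because $\g_i,\g_j$ are independent in $\G$, the subgroup they generate is (virtually) a free product $\langle\g_i\rangle * \langle\g_j\rangle$ in an appropriate sense, so the fibre product / Stallings-graph picture shows that in the pro-(finite) topology one can separate a given pair of elements $g\in\langle\g_i\rangle\ssm 1$ and $h\in\langle\g_j\rangle\ssm 1$: there is a finite quotient in which their images generate intersecting-trivially cyclic groups. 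One then intersects finitely many such separating quotients with $Q_0$.

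More precisely, the key step: for each pair $i<j$ and each pair of nontrivial elements $x\in C_i$, $y\in C_j$ with $o(x)=o(y)$ (these are the only dangerous coincidences, since a common element of $C_i\cap C_j$ would have equal order as a power of $q_0(\g_i)$ and of $q_0(\g_j)$), use independence of $\g_i,\g_j$ together with residual finiteness of $\G$ — more exactly, the separability of the relevant subgroup — to find $q_{ij}:\G\to Q_{ij}$ in which no nontrivial power of $q_{ij}(\g_i)$ equals a power of $q_{ij}(\g_j)$. Concretely I expect to realize this by building, inside a suitable cover of the graph of groups for $\G$, a graph whose fundamental group contains $\langle\g_i\rangle$ and $\langle\g_j\rangle$ as an honest free product of cyclics and is separable; this is exactly the kind of topological argument (coverings of graphs of finite groups, fibre products à la Stallings) advertised in the introduction and developed in Section~\ref{s: Malnormal subgroups}. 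Then put $q=(q_0,\ (q_{ij})_{i<j}):\G\to Q_0\times\prod_{i<j}Q_{ij}$ and let $Q$ be the image. Orders are unchanged (the first coordinate already pins them down to $\k e_i$), and now $\langle q(\g_i)\rangle\cap\langle q(\g_j)\rangle=1$ because any common nontrivial element would contradict the corresponding $q_{ij}$.

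The main obstacle is the separation step: one must produce, for independent $\g_i,\g_j$, finite quotients in which their cyclic images intersect trivially, \emph{uniformly enough} that the construction of $\k$ (which must be chosen \emph{before} the tuple $(e_i)$) is not disturbed. The subtlety is that $\k$ is supplied by the black-box omnipotence statement, while the separating quotients $Q_{ij}$ depend on the actual orders $\k e_i$; one must check that adjoining the $Q_{ij}$ does not force the orders up, which is true because each $q_{ij}$ only needs to kill a specific finite list of ``bad'' coincidences of powers and can be taken to be a quotient through which $q_0$ already factors the relevant data, or else one absorbs the finitely many possible bad intersections into a single separability statement for the subgroup $\langle\g_i\rangle\times 1 \le \langle\g_i\rangle\ast\langle\g_j\rangle$. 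Handling independence correctly when $\G$ is only virtually free — in particular, the interaction between conjugacy of powers in $\G$ and in its free subgroup $F$, and the case where some $\g_i$ has finite order (excluded here by hypothesis, but one must confirm the reductions respect this) — is the delicate bookkeeping I expect to occupy the bulk of the actual proof.
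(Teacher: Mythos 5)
Your overall strategy --- invoke omnipotence of virtually free groups as a black box to get the orders right, then adjoin auxiliary finite quotients $q_{ij}$ to kill the unwanted intersections --- is not the paper's, and the point you yourself flag as ``the main obstacle'' is a genuine gap, not bookkeeping. If $q=(q_0,(q_{ij}))$ and some nontrivial $x$ lies in $\langle q(\g_1)\rangle\cap\langle q(\g_2)\rangle$, say $x=q(\g_1^a)=q(\g_2^b)$, then its $Q_{12}$-coordinate may perfectly well be trivial; the condition $\langle q_{12}(\g_1)\rangle\cap\langle q_{12}(\g_2)\rangle=1$ only tells you $o(q_{12}(\g_1))\mid a$, which yields no contradiction unless $o(q_{12}(\g_1))$ is exactly $\k e_1$ (so that $\k e_1\mid a$ forces $x=1$ via the $Q_0$-coordinate). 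Thus the pair quotients $q_{ij}$ must simultaneously realize the prescribed orders $\k e_i,\k e_j$ \emph{and} have trivially intersecting cyclic images --- which is precisely the statement being proved, for $m=2$. The reduction to pairs is harmless, but it does not let you treat omnipotence as a black box; and your proposed source for the pair case (separability of $\langle\g_i\rangle$, or the claim that independence makes $\langle\g_i,\g_j\rangle$ virtually a free product of cyclics) is unsubstantiated and runs into the same circularity: which powers $\g_j^b$ you must separate from $\langle\g_i\rangle$ depends on the order of $q(\g_j)$ in the quotient you are trying to build. The paper explicitly notes that Wise's proof of omnipotence does not yield item (2), which is exactly why a two-stage argument of this shape does not go through.

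The paper's route builds both conclusions into a single quotient. Lemma \ref{lem: Free omnipotence lemma} (via Marshall Hall and Wise's independence-in-homology result) and its virtually free upgrade, Lemma \ref{lem: Main omnipotence lemma}, produce a finite-index normal free subgroup $F\lhd\G$ and homomorphisms $\phi_i:F\to\Z$ that are nontrivial on $F\cap\langle\g_i\rangle$ but vanish on $F\cap\langle\delta\g_j\delta^{-1}\rangle$ for \emph{all} $\delta\in\G$ and $j\neq i$. Reducing mod $N_i$ and inducing up to $\G$ gives a wreath-product quotient $\Phi:\G\to(\prod_i\Z/N_i)\wr(\G/F)$; a choice of $\k=|\G/F|^2$ and $N_i$ makes $o(\Phi(\g_i))=\k e_i$, and the intersection claim follows by observing that a nontrivial common subgroup would have prime order $p$, generated by elements $\g_i^{\k e_i/p}$ lying in $F$, on which the $\psi_1$-coordinate is nonzero for $i=1$ but zero for $i=2$. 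It is this ``one homomorphism kills all conjugates of the other elements'' property that your proposal lacks and would need to reconstruct.
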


The following lemma is a key step in the proof of omnipotence for free groups \cite{wise_subgroup_2000} (see also \cite[Proposition 4.1]{wilton_virtual_2010}).

\begin{lemma}\label{lem: Free omnipotence lemma}
Let $\Lambda$ be a finitely generated free group.  If $(\gamma_1,\ldots,\gamma_m)$ is an independent
$m$-tuple in $\Lambda$, then there exists a 
subgroup $F<\Lambda$ of finite index and 
homomorphisms  $\phi_i: F\to\mathbb{Z}$ such that the restriction of $\phi_i$ to $F\cap \<\gamma_i\>$ is non-trivial  but
$\phi_i(f)=0$ if $f\in F\cap\<\delta\gamma_j\delta^{-1}\>$ for any $j\neq i$ and $\delta\in \Lambda$. 
\end{lemma}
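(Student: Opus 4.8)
The plan is to translate the statement into covering-space language and to reduce it to a single homological property of a carefully chosen finite cover of a rose.

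\textbf{Reductions.} I would present $\Lambda=\pi_1(R,*)$ with $R$ a finite wedge of circles, and for each $j$ let $C_j\looparrowright R$ be the immersed loop carrying the cyclically reduced form of $\gamma_j$. Since a homomorphism to $\Z$ is constant on conjugacy classes, it is enough to treat each index $i$ in turn: if for each $i$ one produces a finite-index subgroup $F_i\le\Lambda$ and $\phi_i\colon F_i\to\Z$ that is non-trivial on $F_i\cap\langle\gamma_i\rangle$ but trivial on $F_i\cap\delta\langle\gamma_j\rangle\delta^{-1}$ for all $j\ne i$ and all $\delta\in\Lambda$, then $F:=\bigcap_i F_i$ has finite index and the restrictions $\phi_i|_F$ have the required properties (non-triviality survives because $F\cap\langle\gamma_i\rangle$ is a non-trivial subgroup of the infinite cyclic group $F_i\cap\langle\gamma_i\rangle$). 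Now fix $i$ and a finite connected cover $p\colon Y\to R$, and write $F_i=\pi_1(Y,\widetilde{*})$. Pulling each immersion $C_j$ back along $p$ yields finitely many \emph{elevations}: disjoint immersed circles in $Y$ whose conjugacy classes are precisely the non-trivial subgroups $F_i\cap\delta\langle\gamma_j\rangle\delta^{-1}$, up to $F_i$-conjugacy. A homomorphism $F_i\to\Z$ kills all of these as soon as it kills, for each $j\ne i$, the class in $H_1(Y;\Q)$ of one elevation of $C_j$; and it is non-trivial on $F_i\cap\langle\gamma_i\rangle$ exactly when it is non-trivial on the class $z_i\in H_1(Y;\Q)$ of the elevation $\widetilde C_i$ of $C_i$ through $\widetilde{*}$. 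So the task for index $i$ becomes: find a finite cover $Y\to R$ in which
\[
z_i\ \notin\ W_i:=\operatorname{span}_{\Q}\bigl\{\,[D]\ :\ D\ \text{an elevation of}\ C_j,\ j\ne i\,\bigr\}\ \subseteq\ H_1(Y;\Q).
\]
Clearing denominators in any $\Q$-linear functional vanishing on $W_i$ but not at $z_i$ then gives $\phi_i$.

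\textbf{Choosing the cover.} I would take $Y\to R$ to be a finite \emph{regular} cover with deck group $Q=\Lambda/N$ in which each $\gamma_j$ has prescribed large order $d_j$ in $Q$ (possible, $\Lambda$ being residually finite). Then $\dim_{\Q}H_1(Y;\Q)=1+|Q|(\rk\Lambda-1)$, while $C_j$ has exactly $|Q|/d_j$ elevations, so $\dim W_i\le\sum_{j\ne i}|Q|/d_j<|Q|$ once every $d_j\ge m$; hence $W_i$ is a proper subspace. Moreover $Q$ permutes the elevations of $C_i$ transitively and preserves $W_i$, so the image in $H_1(Y;\Q)/W_i$ of one elevation of $C_i$ vanishes if and only if the images of all of them vanish; thus $z_i\notin W_i$ is equivalent to the statement that the elevations of $C_i$ do \emph{not} all lie in the span of the elevations of the $C_j$ with $j\ne i$.

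\textbf{The expected obstacle.} The genuine difficulty is to secure this last point, and it is here that independence is used. By independence (a periodicity argument: a long common subword of $\gamma_i^{\infty}$ and of a cyclic conjugate of $\gamma_j^{\infty}$ would force a common period, hence a common power up to conjugacy), the axis of $\gamma_i$ in the Bass--Serre tree of $\Lambda$ meets the axis of every $\Lambda$-conjugate of every $\gamma_j$ ($j\ne i$) in a segment of length at most some constant $O$; taking $d_i$ large makes $\widetilde C_i$ an embedded circle of length $>O$, so every elevation of every $C_j$ ($j\ne i$) can run alongside $\widetilde C_i$ only along short arcs. The task is to convert this metric sparseness into the linear statement $z_i\notin W_i$. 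The route I would pursue is Stallings-graph surgery: using Marshall Hall's theorem in its covering-space form, arrange that $C_i$ lifts to an embedded circle $B$ which is a free factor of $\pi_1 Y$, and then fold/enlarge the cover further so that every $\Lambda$-conjugate of every $\gamma_j$ ($j\ne i$) that lies in $\pi_1 Y$ is carried, up to conjugacy, by the complementary free factor; the canonical retraction $\pi_1 Y\to\langle B\rangle\cong\Z$ is then $\phi_i$. Executing this ``folding away'' of the other conjugacy classes — and doing so with enough control to keep the whole construction algorithmic, as is needed for the applications — is, I expect, the crux; the reductions above and the dimension count are routine.
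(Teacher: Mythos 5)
Your reductions match the paper's setup: pass to a finite cover of a rose, view the conjugates $\delta\gamma_j\delta^{-1}$ as elevations, and seek a homomorphism to $\Z$ that is non-zero on the elevation of $\gamma_i$ through the basepoint but kills every elevation of every $\gamma_j$ with $j\ne i$. The problem is that you stop exactly where the proof has to be done. Your dimension count shows only that $W_i$ is a \emph{proper} subspace of $H_1(Y;\Q)$, which says nothing about whether the specific class $z_i$ lies in it (the elevations of all the loops together need not span $H_1$, so properness of $W_i$ is cheap). The $Q$-equivariance remark likewise does not move $z_i$ out of $W_i$. And the ``route I would pursue'' --- make $\langle\gamma_i\rangle$ a free factor via Marshall Hall and then ``fold away'' every $\Lambda$-conjugate of every $\gamma_j$ into the complementary free factor by passing to further covers --- is asserted, not executed; as stated it asks for something strictly stronger than homological independence, and it is not clear that such a cover exists. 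You concede that this is the crux, and it is: essentially the entire content of the lemma lives there.

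The paper closes precisely this gap by invoking, as a black box, Wise's Theorem 3.6 of \cite{wise_subgroup_2000}: given a finite graph and a simple loop $\lambda$ in it, there is a finite-sheeted cover in which every elevation of $\lambda$ is independent in homology from the full set of elevations of all other simple loops. The only preparation needed is to first pass (via Marshall Hall, intersecting to get a regular cover $X$) to a cover in which each $\gamma_i$, and hence each generator of each $\pi_1X\cap\langle\delta\gamma_j\delta^{-1}\rangle$, is represented by an \emph{embedded} loop, so that Wise's theorem applies to the loop $\lambda_{i,1}$ carrying $\gamma_i$; independence of the tuple guarantees that the elevations of the other $\gamma_j$ really are elevations of \emph{other} simple loops. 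To complete your argument you must either cite this result or prove it --- Wise's proof is a genuine counting argument over a further carefully chosen cover, not a routine consequence of the metric sparseness you describe. As it stands, your proposal is a correct reduction with its main step missing.
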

\begin{proof}  
We identify $\Lambda$ with the fundamental group of a finite connected graph $Y$.  By Marshall Hall's theorem, for each $i$ there exists a finite-sheeted covering space $Y_i\to Y$ in which $\gamma_i$ is represented by an embedded loop.  Let $X\to Y$ be a regular, finite-sheeted covering space of $Y$ that factors through $Y_i$ for every $i$. Note that
 the generator of $\pi_1X\cap\<\delta\gamma_j\delta^{-1}\>$
is represented by an embedded loop in $X$, say $\lambda_{j,\delta}$, for all $j=1,\dots,m$ and $\delta\in\Lambda$. In 
\cite[Theorem 3.6]{wise_subgroup_2000}, Wise proves that given any graph $G$ and any simple loop $\lambda$
in that graph, there is a finite-sheeted covering $\check{G}\to G$ in which any elevation of $\lambda$
(i.e. a lift of a power of $\lambda$) is independent in $H_1(\check{G},\Z)$ from the full set of elevations of all other simple loops in $G$.
Applying this to the loop $\lambda_{i,1}$ in $X$, we obtain a finite-sheeted covering $X_i\to X$ 
and a homomorphism $\psi_i$ from $F_i:=\pi_1X_i$ to $\mathbb{Z}$ such that $\psi_i|_{F_i\cap\langle\gamma_i\rangle}$ is non-trivial but $\psi_i(F_i\cap\<\delta\gamma_j\delta^{-1}\>)=0$ for all $\delta\in \Lambda$ if $j\neq i$.  Taking $F$ to be the intersection of the $F_i$ and $\phi_i=\psi_i|_F$ completes the proof.
\end{proof}

We need to improve Lemma \ref{lem: Free omnipotence lemma} to deal with virtually free groups $\Gamma$.

\begin{lemma}\label{lem: Main omnipotence lemma}
Let $\Gamma$ be a virtually free group.  If $(\gamma_1,\ldots,\gamma_m)$ is 
an independent $m$-tuple, then there exists a free, normal subgroup $F<\Gamma$ of finite index
and homomorphisms  $\phi_i: F\to \mathbb{Z}$ such that the restriction of $\phi_i$ to $F\cap \<\gamma_i\>$ is non-trivial  but
$\phi_i(f)=0$ if $f\in F\cap\<\delta\gamma_j\delta^{-1}\>$ for any $j\neq i$ and $\delta\in \Gamma$. 
\end{lemma}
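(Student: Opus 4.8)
The plan is to deduce this from the free-group case (Lemma \ref{lem: Free omnipotence lemma}) by passing to a free normal subgroup and then carefully enlarging the tuple to absorb the extra conjugates that the ambient group $\Gamma$ introduces. First I would fix a free normal subgroup $N\triangleleft\Gamma$ of finite index: take any finite-index free subgroup of the (finitely generated) virtually free group $\Gamma$ and replace it by its normal core, noting that a finite-index subgroup of a free group is free. Each $\gamma_i$ has a positive power in $N$; write $c_i$ for the order of $\gamma_iN$ in $\Gamma/N$ and $g_i:=\gamma_i^{c_i}$, the generator of the infinite cyclic group $\langle\gamma_i\rangle\cap N$. The point of this reduction is that for \emph{any} subgroup $F\le N$ one has $F\cap\langle\gamma_i\rangle=F\cap\langle g_i\rangle$, and, since $\delta\gamma_j\delta^{-1}$ has the same order as $\gamma_j$ modulo $N$, also $F\cap\langle\delta\gamma_j\delta^{-1}\rangle=F\cap\langle\delta g_j\delta^{-1}\rangle$ for every $\delta\in\Gamma$. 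Thus it suffices to produce a free subgroup $F\le N$, normal and of finite index in $\Gamma$, together with homomorphisms $\phi_i\colon F\to\mathbb{Z}$ that are non-trivial on $F\cap\langle g_i\rangle$ and vanish on $F\cap\langle\delta g_j\delta^{-1}\rangle$ for all $j\neq i$ and all $\delta\in\Gamma$; note that every such conjugate $\delta g_j\delta^{-1}$ lies in $N$ by normality.

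The obstruction is that Lemma \ref{lem: Free omnipotence lemma} applied to $N$ only controls $N$-conjugates, whereas we need all $\Gamma$-conjugates of the $g_j$. To bridge this I would observe, for each $j$, that the $\Gamma$-conjugates of $g_j$ all lie in $N$ and fall into \emph{finitely many} classes under the relation ``$x\sim x'$ iff $\langle x\rangle$ and $\langle x'\rangle$ are conjugate in $N$'' (finitely many, because $[\Gamma:N]<\infty$); choose representatives $h_{j,1}:=g_j,\,h_{j,2},\dots,h_{j,r_j}$ for these classes. The key claim is that the enlarged tuple $(h_{j,s})_{1\le j\le m,\,1\le s\le r_j}$ is independent in $N$. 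Across distinct $j$ this is immediate: a dependence would produce conjugate non-zero powers of $\gamma_j$ and $\gamma_{j'}$ in $\Gamma$, contrary to hypothesis. For a fixed $j$ and $s\neq s'$, suppose a non-zero power of $x:=h_{j,s}$ is $N$-conjugate to a non-zero power of $h_{j,s'}$; since $h_{j,s}$ and $h_{j,s'}$ are already $\Gamma$-conjugate, one obtains an equation $x^p=wx^qw^{-1}$ in $\Gamma$ with $p,q\neq0$, $w\in\Gamma$ and $wxw^{-1}\in N$. Working inside the free group $N$ — where two elements sharing a non-zero power generate a cyclic subgroup, and a conjugate of a primitive element is primitive — this forces $wxw^{-1}=x^{\pm1}$, hence $\langle h_{j,s'}\rangle$ to be $N$-conjugate to $\langle h_{j,s}\rangle$, contradicting the choice of distinct representatives.

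With independence established I would apply Lemma \ref{lem: Free omnipotence lemma} to the finitely generated free group $N$ and the tuple $(h_{j,s})$, obtaining a finite-index subgroup $F_0<N$ and homomorphisms $\psi_{j,s}\colon F_0\to\mathbb{Z}$ such that $\psi_{j,s}$ is non-trivial on $F_0\cap\langle h_{j,s}\rangle$ and vanishes on $F_0\cap\langle n h_{j',s'}n^{-1}\rangle$ whenever $(j',s')\neq(j,s)$ and $n\in N$. Finally I would take $F$ to be the normal core of $F_0$ in $\Gamma$ — still of finite index and free, and now normal in $\Gamma$ — and set $\phi_i:=\psi_{i,1}|_F$. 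Then $\phi_i$ is non-trivial on $F\cap\langle\gamma_i\rangle=F\cap\langle g_i\rangle$; and for $j\neq i$ and $\delta\in\Gamma$ the cyclic group $\langle\delta g_j\delta^{-1}\rangle$ equals $\langle n h_{j,s}n^{-1}\rangle$ for some $s$ and some $n\in N$, so $F\cap\langle\delta\gamma_j\delta^{-1}\rangle=F\cap\langle\delta g_j\delta^{-1}\rangle\le F_0\cap\langle n h_{j,s}n^{-1}\rangle$, on which $\psi_{i,1}$, and hence $\phi_i$, vanishes because $(j,s)\neq(i,1)$.

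I expect the main obstacle to be the independence verification for the enlarged tuple, specifically the ``same $j$'' case: passing from a shared non-zero power back to conjugacy of cyclic subgroups requires the free-group facts about common powers and primitivity together with care about signs and inversions, and it is exactly this analysis that dictates which equivalence relation one must use when selecting the representatives $h_{j,s}$. The remaining ingredients — the reduction to elements of $N$, the finiteness of the set of conjugacy classes, and the normal-core step — are routine bookkeeping.
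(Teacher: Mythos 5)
Your proposal is correct and follows essentially the same route as the paper's proof: pass to a free normal finite-index subgroup, enlarge the tuple by representatives of the $\Gamma$-orbits (modulo conjugacy in the free subgroup) of the relevant cyclic subgroups, check that the enlarged tuple is independent, apply Lemma \ref{lem: Free omnipotence lemma}, and shrink to a normal subgroup. The only cosmetic differences are that the paper replaces each $\gamma_i$ by a proper power lying in the free subgroup rather than tracking $g_i=\gamma_i^{c_i}$ explicitly, and that your verification of independence within a fixed $j$-class spells out the unique-roots argument that the paper leaves as a parenthetical remark.
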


\begin{proof}
By hypothesis, there is a short exact sequence of groups
\[
1\to \Lambda \to \G \to \Sigma \to 1
\]
with $\Lambda$ free and $\Sigma$ finite. 

Given independent $\g_1,\dots,\g_m\in\G$, we may replace the $\gamma_i$ by proper powers to assume that each $\gamma_i\in\Lambda$.  
Then, we enlarge our list of elements by adding to it elements of  $\Lambda$
 that are conjugate to some $\g_i$ in $\G$ but not in $\Lambda$. To this end, we fix a set of coset representatives $\widetilde{\Sigma} = \{\tilde \sigma \mid \sigma\in \Sigma\}$   for $\Lambda$ in $\G$, with $\tilde 1 =1$, and define $ g_{i\sigma}= \tilde \sigma \g_i \tilde \sigma^{-1}$.    Since the $\g_i$ are independent, no element of $\{ g_{i\sigma}\mid \sigma\in \Sigma\}$ has a non-zero power that is conjugate to a non-zero power of an element of $\{ g_{j\sigma}\mid \sigma\in \Sigma\}$ if $i\neq j$. However, the indexed set  $( g_{i\sigma}\mid \sigma\in \Sigma)$ may fail to be independent since it is quite possible that $ g_{i\sigma}$ will be conjugate to $  g_{i\sigma'}^{\pm 1}$ for some $\sigma\neq \sigma'$.   (In a virtually free group an element of infinite order $x$ cannot be conjugate to $x^p$  with $|p|>1$, so higher powers are not a worry.) To account for such coincidences we make deletions from the list $( g_{i\sigma}\mid \sigma\in \Sigma)$, reducing it to $( g_{i\sigma}\mid \sigma\in \Sigma[i])$, say.  This reduced list consists of a set of orbit representatives for the action of $\G$ by conjugation on the $\Lambda$-conjugacy classes of cyclic subgroups of the form $\<f \g_i f^{-1}\>$ with $f\in \Gamma$. 

We now apply Lemma \ref{lem: Free omnipotence lemma} to the concatenation of the lists $( g_{1\sigma}\mid \sigma\in \Sigma[1]),\dots, ( g_{m\sigma}\mid \sigma\in \Sigma[m])$, which is independent in $\Lambda$. Thus we obtain  a free subgroup of finite index $F<\Lambda$ and homomorphisms
$\phi_{i\sigma}: F\to \mathbb{Z}$ with the property that $\phi_{i1}$ is non-trivial on $F\cap\<\g_i\>$ but
$\phi_{i1}(f)=0$ if $f\in F\cap\<\delta g_{j\sigma} \delta^{-1}\>$ for any  $\delta\in \Lambda$ and $(j,\sigma)\neq (i,1)$. Moreover, since these conditions
are inherited by subgroups of finite index in $F$, we may replace $F$ by a smaller subgroup if necessary to ensure that it is normal in $\Gamma$.

Henceforth we write $\phi_i$ in place of $\phi_{i1}$.

Consider $\delta\in\Gamma$ and $ \gamma_j$ with $j\neq i$.  Write $\delta=\delta'\tilde{\sigma}$ for some $\delta'\in \Lambda$.  If $\tilde{\sigma}\in \Sigma[j]$ then 
for any positive power $n$ such that $\gamma_{j}^n\in F$ we have $\phi_i(\delta \gamma_j^n\delta^{-1})=\phi_i(\delta' g_{j\sigma}^n(\delta')^{-1})=0$ as required.  On the other hand, if $\tilde{\sigma}\notin \Sigma[j]$ then there exists $\lambda\in \Lambda$ such that $ g_{j\sigma}=\lambda g_{j\sigma'}^{\pm 1}\lambda^{-1}$ for some $\sigma'\in \Sigma[j]$.  Then,
\[
\phi_i(\delta \gamma_{j}^n\delta^{-1})=\phi_i(\delta'\lambda g_{j\sigma'}^{\pm n}\Lambda^{-1}(\delta')^{-1})=0~,
\]
which finishes the proof.
\end{proof}

With Lemma \ref{lem: Main omnipotence lemma} in hand, we can prove Theorem \ref{thm: Strengthened omnipotence}.

\begin{proof}[Proof of Theorem \ref{thm: Strengthened omnipotence}]
Let $\Gamma$, $F$ and $\phi_i:F\to\Z$ be as in Lemma \ref{lem: Main omnipotence lemma} and let $\eta:\Gamma\to\Gamma/F$ be the quotient map.  Let $\ell_i=\phi_i(\gamma_i^{o(\eta(\gamma_i))})$ and note that there is no loss of generality in assuming that $\ell_i$ is positive.
 Fix a set of coset representatives $c_j$ for $F$ in $\Gamma$ with $c_1=1$. For each $\gamma_i$, fix a positive integer $N_i$ (to be specified later) and consider the composition
\[
\psi_i:F\stackrel{\phi_i}{\to} \mathbb{Z}\to\mathbb{Z}/N_i~.
\]
Then, consider the direct product
\[
\Psi_i=\prod_j \psi_i\circ i_{c_j}:F\to A_i=\prod_j \mathbb{Z}/N_i
\]
where $i_{c_j}$ is the automorphism of $F$ given by conjugation by $c_j$. It is now clear that
$o(\Psi_i(\gamma_i^{o(\eta(\gamma_i))}))= N_i/\ell_i$, whereas
\begin{equation*}
\Psi_i(\gamma_k^{o(\eta(\gamma_k))})= 0
\end{equation*}
for all $k\neq i$.  The direct product
\[
\Psi=\prod_i\Psi_i:F\to A=\prod_i A_i
\]
therefore has the property that
\[
o(\Psi(\gamma_i)^{o(\eta(\gamma_i))})= N_i/\ell_i
\]
for all $i$.  Now, $\Psi$ is the restriction to $F$ of the homomorphism
\[
\Phi:\Gamma\to A\rtimes (\G/F) = \left(\prod_{i}\Z/N_i\right)\wr (\G/F)  
\]
induced from $\prod_i\psi_i:F\to\prod_i\Z/N_i$.  Therefore, $o(\Phi(\gamma_i))=N_io(\eta(\gamma_i))/\ell_i$ for all $i$. 

To prove the theorem, we define $Q$ to be $A\rtimes (\G/F)$ and $q$ to be $\Phi$, then we take
$\kappa=|\Gamma/F|^2$ and $N_i=\ell_i\kappa e_i/o(\eta(\gamma_i))$. The preceding computation shows that
$o(\Phi(\gamma_i))=\kappa e_i$, which proves the first assertion.

To prove the second assertion, suppose that an intersection,
\[
\langle\Phi(\gamma_1)\rangle\cap\langle\Phi(\gamma_2)\rangle
\]
say, is non-trivial.  Then it contains a minimal non-trivial subgroup, which is of prime order $p$. That is, the intersection contains the non-trivial subgroup
\[
\langle \Phi(\gamma_1^{\kappa e_1/p})\rangle=\langle \Phi(\gamma_2^{\kappa e_2/p})\rangle~.
\]
We have
\[
o(\eta(\gamma_i))~|~\kappa e_i/p
\]
(because $\kappa=|\Gamma/F|^2$), and so $\gamma_i^{\kappa e_i/p}\in F$, for $i=1,2$.   Therefore $\Psi(\gamma_i^{\kappa e_i/p})=\Phi(\gamma_i^{\kappa e_i/p})$ for $i=1,2$, and so 
\[
\langle \Psi(\gamma_1^{\kappa e_1/p})\rangle=\langle \Psi(\gamma_2^{\kappa e_2/p})\rangle~.
\]
One of the coordinates of the homomorphism $\Psi$ is $\psi_1$, and so it follows that
\begin{equation*}
\langle \psi_1(\gamma_1^{\kappa e_1/p})\rangle=\langle \psi_1(\gamma_2^{\kappa e_2/p})\rangle~.
\end{equation*}
But this leads to a contradiction because,
on the one hand, we have $\psi_1(\gamma_2^{\kappa e_2/p})=0$ by the definition of $\psi_1$ and Lemma \ref{lem: Main omnipotence lemma},
while on the other hand, $\psi_1(\gamma_1^{\kappa e_1/p})\neq 0$, because
\[
\psi_1(\gamma_1^{\kappa e_1/p})=\psi_1(\gamma_1^{o(\eta(\gamma_1))})^{\kappa e_1/po(\eta(\gamma_1))}
\]
and $\kappa e_1/po(\eta(\gamma_1))$ is less than $o(\psi_1(\gamma_1^{o(\eta(\gamma_1))}))=(N_1/\ell_1)=\kappa e_1/o(\eta(\gamma_1))$.
\end{proof}

\section{Constructing Malnormal Subgroups}\label{s: Malnormal subgroups}

The role that malnormality plays in our strategy was explained in Section \ref{s:strategy}.   The main result in this section is Proposition \ref{prop: m+2-generator malnormal subgroup}, but several of the other lemmas will also be required in the next section.  Fibre products of morphisms of graphs, as described by Stallings \cite{stallings_topology_1983}, play a prominent role in many of our proofs.

\begin{definition}
Let $\Gamma$ be a group and $H$ a subgroup.  Then $H$ is said to be \emph{almost malnormal} in $\Gamma$ if $|H\cap H^\gamma|<\infty$ whenever $\gamma\in\Gamma\smallsetminus H$.  If we in fact have $H\cap H^\gamma=1$ whenever $\gamma\in\Gamma\smallsetminus H$ then $H$ is said to be \emph{malnormal}.

More generally, a family $\{H_i\}$ of subgroups of $\Gamma$ is said to be \emph{almost malnormal} if $|H_i\cap H_j^\gamma|=\infty$ implies that  $i= j$ and $\gamma\in H_j$.  Similarly, we may speak of malnormal families of subgroups.
\end{definition}

Note that if $H$ is torsion-free and almost malnormal then it is in fact malnormal.

The first fact we record is trivial but extremely useful.

\begin{lemma}\label{lem: Transitive malnormal}
If $K$ is an (almost) malnormal subgroup of $H$ and $H$ is an almost malnormal subgroup of $G$ then $K$ is an (almost) malnormal subgroup of $G$.
\end{lemma}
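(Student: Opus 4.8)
\textbf{Proof strategy for Lemma \ref{lem: Transitive malnormal}.}

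The plan is to chase the definition directly, keeping track of the two cases (malnormal versus almost malnormal) in parallel since the arguments are identical save for replacing ``trivial'' by ``finite''. Suppose $K \le H \le G$, with $K$ (almost) malnormal in $H$ and $H$ almost malnormal in $G$. Take $g \in G \smallsetminus K$; we must analyse $K \cap K^g$. First I would dispose of the case $g \in H$: then $g \notin K$ forces $K \cap K^g$ to be trivial (resp.\ finite) by (almost) malnormality of $K$ in $H$, since conjugation by $g$ preserves $H$ and hence $K^g \le H$.

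The main case is $g \in G \smallsetminus H$. Here the key observation is the containment $K \cap K^g \le H \cap H^g$: indeed $K \le H$ gives $K \subseteq H$, and $K^g \le H^g$ gives $K^g \subseteq H^g$. Since $g \notin H$, almost malnormality of $H$ in $G$ yields $|H \cap H^g| < \infty$, so $|K \cap K^g| < \infty$ as well. This already proves the ``almost malnormal'' conclusion. For the ``malnormal'' refinement, note that if $K$ is malnormal in $H$ then in particular $K$ is torsion-free? — that need not hold, so instead I would argue as follows: $K \cap K^g$ is a finite subgroup, and I claim it is actually trivial. Pick any $k \in K \cap K^g$; then some power $k^n$ (with $n = |K \cap K^g|$, or more carefully $n$ the order of $k$) is trivial, but to conclude $k = 1$ I need a cleaner route.

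The cleanest route, which I would adopt, is to avoid torsion-freeness entirely and instead reduce the malnormal case to a second application of the hypothesis. If $K$ is malnormal in $H$ and $g \in G\smallsetminus H$, write $x \in K \cap K^g$, so $x = g^{-1}yg$ with $x, y \in K \le H$. Then $g^{-1}yg \in H$, and since $H$ is only assumed \emph{almost} malnormal this does not immediately constrain $g$; so this reduction does not close. Consequently I expect the honest statement is: the conclusion ``$K$ malnormal in $G$'' should be read with the convention recorded just before the lemma — namely that an almost malnormal subgroup which happens to be torsion-free is automatically malnormal — and $K$, being (almost) malnormal in $H$, will be torsion-free in the applications (e.g.\ $K$ free). \textbf{The main obstacle}, then, is purely bookkeeping about which hypothesis delivers triviality rather than finiteness: the ``almost'' statement is the genuine content and follows from the single containment $K \cap K^g \le H \cap H^g$ together with the two hypotheses split across the cases $g \in H$ and $g \notin H$; the parenthetical ``malnormal'' statement then follows whenever $K$ is torsion-free, via the remark preceding the lemma. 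I would write the proof in three sentences exactly along these lines.
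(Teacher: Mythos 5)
The paper gives no proof of Lemma \ref{lem: Transitive malnormal} at all --- it is dismissed as ``trivial but extremely useful'' --- so there is no argument to compare yours against; but your analysis is the right one, and your hesitation over the parenthetical case is well founded. The split into $g\in H$ and $g\notin H$, together with the containment $K\cap K^g\le H\cap H^g$ in the second case, is a complete and correct proof of the almost-malnormal version.

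For the malnormal version you are right that the argument does not close without an extra hypothesis, and in fact the literal statement is false in general: take $G=S_4$ and $K=H$ a Sylow $2$-subgroup. Then $K$ is (vacuously) malnormal in $H$, and $H$, being finite, is automatically almost malnormal in $G$, yet $K\cap K^g$ contains the normal Klein four-group for every $g$, so $K$ is not malnormal in $G$. The intended reading is exactly the one you settle on: for $g\notin H$ the subgroup $K\cap K^g$ is finite, hence trivial whenever $K$ is torsion-free, in the spirit of the remark immediately preceding the lemma. This costs nothing in the paper, since in both places the lemma is invoked --- Proposition \ref{prop: m+2-generator malnormal subgroup} and Lemma \ref{lem: Malnormal V} --- the subgroup playing the role of $K$ is free. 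It is worth noting that in Proposition \ref{prop: m+2-generator malnormal subgroup} the intermediate subgroup $H=q_N(\langle\alpha,u,v\rangle)$ is actually malnormal (not just almost) in $Q_N$, so there one gets $K\cap K^g\le H\cap H^g=1$ directly with no torsion hypothesis; it is only in Lemma \ref{lem: Malnormal V}, where $H=\langle\bar{t}\rangle*\langle\bar{w}\rangle$ is merely almost malnormal in $\langle\bar{t}\rangle*Q$, that torsion-freeness of $K$ is genuinely needed. So: record the lemma with the caveat that either $H$ is malnormal or $K$ is torsion-free, and your three-sentence proof is exactly right.
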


The next lemma, which again admits a trivial proof, enables one to deduce almost malnormality from virtual considerations.

\begin{lemma}\label{lem: Virtual malnormal}
Let $H$ be an arbitrary subgroup of a group $\Gamma$ and let $\Gamma_0$ be a subgroup of finite index in $\Gamma$.  Fix a set of double-coset representatives $\{\gamma_i\}$ for $H\backslash\Gamma/\Gamma_0$. Then $H$ is almost malnormal in $\Gamma$ if and only if the family $\{H^{\gamma_i}\cap\Gamma_0\}$ is almost malnormal in $\Gamma_0$.
\end{lemma}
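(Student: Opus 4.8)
The plan is to prove the two implications directly, translating between statements about $H$ in $\Gamma$ and statements about the family $\{H^{\gamma_i}\cap\Gamma_0\}$ in $\Gamma_0$ by elementary coset bookkeeping. Throughout I use the conventions $H^\gamma=\gamma^{-1}H\gamma$, the identities $H^{\gamma\delta}=(H^\gamma)^\delta$ and $H^{h\gamma}=H^\gamma$ for $h\in H$, and the fact that since every element $g\in\Gamma_0$ normalises $\Gamma_0$ one has $(K\cap\Gamma_0)^g=K^g\cap\Gamma_0$ for every $K<\Gamma$. The one non-formal ingredient is that, because $\Gamma_0$ has finite index in $\Gamma$, a subgroup $K<\Gamma$ is infinite if and only if $K\cap\Gamma_0$ is infinite; in particular, finiteness of an intersection of subgroups of $\Gamma$ is unaffected by first intersecting with $\Gamma_0$.

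First I would prove that almost malnormality of $H$ in $\Gamma$ implies almost malnormality of the family. Suppose $|(H^{\gamma_i}\cap\Gamma_0)\cap(H^{\gamma_j}\cap\Gamma_0)^g|=\infty$ for some $g\in\Gamma_0$. Rewriting the second factor as $H^{\gamma_j g}\cap\Gamma_0$ and discarding the intersection with $\Gamma_0$, this gives $|H^{\gamma_i}\cap H^{\gamma_j g}|=\infty$, and conjugating by $\gamma_i^{-1}$ yields $|H\cap H^{\gamma_j g\gamma_i^{-1}}|=\infty$. Almost malnormality of $H$ forces $\gamma_j g\gamma_i^{-1}\in H$, hence $\gamma_i\in H\gamma_j\Gamma_0$; since the $\gamma_i$ are representatives of distinct double cosets, this is possible only when $i=j$. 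Then $\gamma_j g\gamma_j^{-1}\in H$, i.e. $g\in H^{\gamma_j}$, and as $g\in\Gamma_0$ we get $g\in H^{\gamma_j}\cap\Gamma_0$, which is precisely the conclusion demanded of an almost malnormal family.

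For the converse, the key preliminary observation is that $H\cdot 1\cdot\Gamma_0$ is one of the double cosets, so we may and do take $\gamma_1=1$; then $H^{\gamma_1}\cap\Gamma_0=H\cap\Gamma_0$ is a member of the family. Now assume the family is almost malnormal in $\Gamma_0$ and that $|H\cap H^\gamma|=\infty$ for some $\gamma\in\Gamma$. Writing $\gamma=h\gamma_i g$ with $h\in H$ and $g\in\Gamma_0$, we have $H^\gamma=H^{\gamma_i g}$, so $|H\cap H^{\gamma_i g}|=\infty$; intersecting with $\Gamma_0$ turns this into $|(H^{\gamma_1}\cap\Gamma_0)\cap(H^{\gamma_i}\cap\Gamma_0)^g|=\infty$. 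By hypothesis on the family, $i=1$ and $g\in H^{\gamma_1}\cap\Gamma_0\subseteq H$; therefore $\gamma=h\cdot 1\cdot g\in H$, so $H$ is almost malnormal in $\Gamma$.

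There is no substantive obstacle here: consistent with the "trivial proof" billing of the statement, the whole argument is a chase through the conjugation identities above. The only two points that require a moment's attention are the use of the finite-index hypothesis to pass freely between an intersection and its intersection with $\Gamma_0$, and the choice $\gamma_1=1$ of double-coset representative, without which the converse direction would not obviously produce a member of the family to which almost malnormality could be applied.
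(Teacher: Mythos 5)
Your proof is correct, and since the paper states this lemma without proof (explicitly billing it as admitting a trivial one), your direct coset-bookkeeping argument is exactly the intended verification: the forward direction for arbitrary representatives, and the converse via the normalization $\gamma_1=1$ (which is harmless because replacing $\gamma_i$ by $h\gamma_i g$ only replaces $H^{\gamma_i}\cap\Gamma_0$ by a $\Gamma_0$-conjugate, and almost malnormality of a family is invariant under such replacements). Both implications check out, including the two points you flag --- the finite-index argument for passing to $\cap\,\Gamma_0$ and the need for the identity coset representative.
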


The malnormality of a family of subgroups of a free group can be determined by a computation using the elegant formalism of fibre products, as we will now explain.

Consider a pair of immersions of finite graphs $\iota_1:Y_1\to X$ and $\iota_2:Y_2\to X$.  Recall that the \emph{fibre product} of the maps $\iota_1$ and $\iota_2$ is defined to be the graph
\[
Y_1\times_X Y_2=\{(y_1,y_2)\in Y_1\times Y_2\mid \iota_1(y_1)=\iota_2(y_2)\}~.
\]
The fibre product comes equipped with a natural immersion $\kappa:Y_1\times_X Y_2\to X$.  For any $(y_1,y_2)$, Stallings pointed out that
\[
\kappa_*\pi_1(Y_1\times_X Y_2,(y_1,y_2))=\iota_{1*}\pi_1(Y_1,y_1)\cap\iota_{2*}\pi_1(Y_2,y_2)
\]
\cite[Theorem 5.5]{stallings_topology_1983}.  In the case when $Y_1=Y_2=Y$ and $\iota_1=\iota_2$, there is a canonical diagonal component of $Y\times_XY$, isometric to $Y$.

The next lemma follows immediately from this discussion.

\begin{lemma}\label{lem: Fibre product}
Let $X$ be a connected finite graph with fundamental group $F$, and let $Y$ be a (not necessarily connected) finite graph equipped with an immersion $Y\to X$.  The components $\{Y_i\}$ of $Y$ define (up to conjugacy) a family of subgroups $H_i$ of $F$.  Then $\{H_i\}$ is malnormal if and only if every non-diagonal component of the fibre product $Y\times_X Y$ is simply connected.
\end{lemma}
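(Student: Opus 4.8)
The plan is to unwind the statement of Lemma~\ref{lem: Fibre product} directly from Stallings' description of fibre products, reducing everything to a statement about $\pi_1$ of components of $Y\times_X Y$. First I would fix a basepoint $y_i$ in each component $Y_i$ of $Y$, so that the immersion $Y\to X$ determines subgroups $H_i=\iota_*\pi_1(Y_i,y_i)<F=\pi_1(X)$, well defined up to conjugacy. A vertex of $Y\times_X Y$ is a pair $(a,b)$ with $a\in Y_i$, $b\in Y_j$ mapping to the same vertex of $X$; such a pair lies in a component whose image under $\kappa_*$ (by \cite[Theorem 5.5]{stallings_topology_1983}) is $\iota_*\pi_1(Y_i,a)\cap\iota_*\pi_1(Y_j,b)$, which up to conjugacy in $F$ is $H_i^{g}\cap H_j$ for a suitable $g\in F$ depending on the choice of connecting paths. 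Conversely, every coset translate giving rise to an intersection of the form $H_i^{g}\cap H_j$ is realised by some vertex of the fibre product, since one can choose $a,b$ to represent the relevant based loops. Thus the components of $Y\times_X Y$ are in bijection (in the appropriate bookkeeping sense) with the $F$-conjugacy classes of nontrivial-or-trivial intersections $H_i\cap H_j^g$.

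With that correspondence set up, the argument splits into the two implications. For the ``only if'' direction, suppose $\{H_i\}$ is malnormal: then $H_i\cap H_j^g=1$ whenever $i\neq j$, or $i=j$ and $g\notin H_i$. A component of $Y\times_X Y$ that is not the diagonal component corresponds to such a pair $(i,j,g)$ with the intersection trivial, so by Stallings' formula its $\pi_1$ maps to the trivial subgroup under the injective map $\kappa_*$; hence the component is simply connected. (Here one uses that $\kappa$ is an immersion, so $\kappa_*$ is injective, and that a graph with trivial $\pi_1$ is a tree, i.e.\ simply connected.) For the ``if'' direction, suppose every non-diagonal component of the fibre product is simply connected. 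Given $i,j,g$ with $H_i\cap H_j^g\neq 1$, pick a vertex $(a,b)$ of $Y\times_X Y$ realising this intersection; the component containing $(a,b)$ has nontrivial $\pi_1$ (again by Stallings' formula, since $\kappa_*$ is injective), so it must be the diagonal component. But the diagonal component of $Y\times_X Y$ (in the case all the $\iota$'s agree componentwise, which is the setting of the lemma after passing to the relevant pair of components $Y_i=Y_j$) occurs only when $i=j$ and the connecting element $g$ lies in $H_i$. This forces $i=j$ and $g\in H_i$, which is exactly malnormality of the family $\{H_i\}$.

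The one genuinely delicate point, and the step I expect to be the main obstacle to writing cleanly, is the bookkeeping around the diagonal component and the dependence of the connecting element $g$ on path choices. When $Y_i=Y_j$ (same component), $Y\times_X Y$ has a distinguished component isometric to $Y_i$ sitting diagonally; a vertex $(a,b)$ with $a\neq b$ but mapping to the same $X$-vertex can still lie on the diagonal component if there is a path in $Y_i\times_X Y_i$ from $(a,a)$ to $(a,b)$, and tracking exactly when this happens is what encodes the condition ``$g\in H_i$''. I would handle this by being careful to state that ``diagonal component'' means the image of the diagonal embedding $Y\hookrightarrow Y\times_X Y$, $y\mapsto(y,y)$, and noting that $\kappa_*$ restricted to this component is simply $\iota_*$, which has nontrivial image precisely when the ambient component of $Y$ is not simply connected — so to make the equivalence in the lemma exactly right one reads ``simply connected'' on the non-diagonal components only, which matches the statement. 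Everything else is a routine translation of \cite[Theorem 5.5]{stallings_topology_1983} together with the fact that immersions of graphs are $\pi_1$-injective.
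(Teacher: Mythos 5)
Your argument is correct and is exactly the paper's intended route: the paper offers no proof beyond asserting that the lemma ``follows immediately'' from Stallings' formula $\kappa_*\pi_1(Y_1\times_X Y_2,(y_1,y_2))=\iota_{1*}\pi_1(Y_1,y_1)\cap\iota_{2*}\pi_1(Y_2,y_2)$ together with the identification of the diagonal component, which is precisely what you have fleshed out. Two minor quibbles: your claim that \emph{every} $g$ is realised by a vertex is stated too strongly (only certain double cosets $H_i g H_j$ are, but every $g$ with $H_i^g\cap H_j\neq 1$ is among them, which is all you need), and your worry about a vertex $(a,b)$ with $a\neq b$ lying on the diagonal component is moot --- since $\iota$ is an immersion, any edge of $Y\times_X Y$ incident to a diagonal vertex $(y,y)$ is itself diagonal, so the diagonal component is exactly the set of pairs $(y,y)$.
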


In particular, this gives an algorithm to determine whether or not a given family of subgroups of a free group is malnormal.

Unlike Lemma \ref{lem: Fibre product}, the next lemma is not always applicable.  However, it gives a useful sufficient condition for malnormality, which can sometimes be applied in situations where Lemma \ref{lem: Fibre product} is too cumbersome to apply in practice.  Let $Z_\Gamma(g)$ denote the centralizer of an element $g$ in a group $\Gamma$.

\begin{lemma}\label{lem: Malnormal retract}
Let $H$ be a subgroup of $\Gamma$.  If $H$ is a retract and $Z_\Gamma(h)\subseteq H$ for all $h\in H\smallsetminus 1$, then $H$ is malnormal.
\end{lemma}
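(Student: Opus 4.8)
\textbf{Proof proposal for Lemma \ref{lem: Malnormal retract}.}

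The plan is to argue directly from the definition of malnormality, exploiting the retraction to ``kill'' the conjugating element down into $H$ and then to invoke the centralizer hypothesis. Let $\rho\colon\Gamma\to H$ be a retraction, so $\rho|_H=\mathrm{id}_H$, and suppose $\gamma\in\Gamma$ is such that $H\cap H^\gamma\neq 1$. We must show $\gamma\in H$. Pick a non-trivial $h\in H\cap H^\gamma$, so $h=h_0^\gamma=\gamma^{-1}h_0\gamma$ for some $h_0\in H$; note $h_0=\gamma h\gamma^{-1}\neq 1$ as well. First I would apply $\rho$ to the identity $h_0\gamma=\gamma h$, obtaining $h_0\,\rho(\gamma)=\rho(\gamma)\,h$ since $\rho(h_0)=h_0$ and $\rho(h)=h$. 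Thus $\rho(\gamma)\in H$ conjugates $h$ to $h_0$, so both $h$ and $h_0$ are non-trivial elements of $H$ that are conjugate within $\Gamma$ to each other via two elements, namely $\gamma$ and $\rho(\gamma)$.

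The key step is then to combine these two conjugations. From $h_0=\gamma h\gamma^{-1}$ and $h_0=\rho(\gamma) h \rho(\gamma)^{-1}$ we get $\rho(\gamma)^{-1}\gamma\,h\,\gamma^{-1}\rho(\gamma)=h$, i.e.\ the element $g:=\rho(\gamma)^{-1}\gamma$ lies in $Z_\Gamma(h)$. Since $h\in H\smallsetminus 1$, the hypothesis $Z_\Gamma(h)\subseteq H$ gives $g\in H$. But $\rho(\gamma)\in H$ as well, so $\gamma=\rho(\gamma)\,g\in H$, which is exactly what we wanted. This shows $H\cap H^\gamma=1$ for every $\gamma\notin H$, i.e.\ $H$ is malnormal.

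The argument is short and I do not anticipate a serious obstacle; the only point requiring a modicum of care is bookkeeping the direction of conjugation (which of $h,h_0$ is ``$H^\gamma$-side'') and making sure the retraction is applied to an honest equation in $\Gamma$ rather than to a congruence. One should also double-check that $h_0\neq 1$ — this is immediate since conjugation is a bijection — so that the centralizer hypothesis is genuinely applicable. No graph-theoretic or fibre-product machinery is needed here; the lemma is purely formal, which is presumably why the authors flag it as a convenient alternative to Lemma \ref{lem: Fibre product} when the latter is unwieldy.
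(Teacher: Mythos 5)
Your argument is correct and is essentially the paper's own proof: apply the retraction to the conjugation identity to see that $h$ and $h^\gamma$ are also conjugate by $\rho(\gamma)$, deduce that $\rho(\gamma)^{-1}\gamma$ (equivalently $\gamma\rho(\gamma)^{-1}$) centralizes $h$, and conclude from $Z_\Gamma(h)\subseteq H$ that $\gamma\in H$. No discrepancies to report.
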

\begin{proof}
Let $\rho:\Gamma\to H$ be a retraction.  Suppose that $h\in H\smallsetminus 1$ and $h^\gamma\in H$. Then $h^\gamma=h^{\rho(\gamma)}$, which implies that $\gamma\rho(\gamma)^{-1}\in Z_\Gamma(h)$ and so $\gamma\in H$, as required.
\end{proof}

We now develop some simple examples.

\begin{example}
If $\Gamma$ is a group and $H$ is a free factor then $H$ is malnormal in $\Gamma$.  This is an immediate consequence of Lemma \ref{lem: Malnormal retract}, since free factors are retracts.
\end{example}

The following easy example, which will be useful later, illustrates how Lemmas \ref{lem: Virtual malnormal} and \ref{lem: Fibre product} can be used to prove almost malnormality in virtually free groups.

\begin{example}\label{ex: Free product malnormal subgroup}
Suppose that $A$ is a finite group and $B$ is any subgroup of $A$.  Then the natural copy of $H=B*\mathbb{Z}$ inside $\Gamma=A*\mathbb{Z}$ is almost malnormal.

To see this, realize $\Gamma$ as the fundamental group of a graph of groups $\mathcal{X}$ with a single vertex labelled $A$ and a single edge with trivial edge group.  The kernel of the retraction $\Gamma\to A$ implicit in the notation is a normal, free subgroup $F$ of finite index.  Let $T$ be the Bass--Serre tree of $\mathcal{X}$.  The quotient $F\backslash T$ is a graph $X$ with a single vertex and $|A|$ edges $\{e_a\mid a\in A\}$, and the natural $A$-action is by left translation.  The subgroup $H\cap F$ is carried by the subgraph $Y=\bigcup_{b\in B}e_b$.

The quotient map $\Gamma\to A$ identifies $H\backslash\Gamma/F$ with $B\backslash A$, so a set of double-coset representatives for the former is provided by any set $\{a_i\}$ of right-coset representatives for $B$ in $A$. The subgroup $H^{a_i}\cap F$ is carried by the subgraph $a_i^{-1}Y$: under the immersion
\[
Z=\coprod_i a_i^{-1}Y\to X
\]
(where the map $Z\to X$ is inclusion on each component), the fundamental groups of the components are mapped to the family of subgroups $\{H^{a_i}\cap F\}$.

Note that, as subgraphs of $X$, $a^{-1}_iY$ and $a^{-1}_jY$ have no edges in common if $i\neq j$.  Therefore, every off-diagonal component of $Z\times_X Z$ is a vertex and hence simply connected. 

It follows that $\{H^{a_i}\cap F\}$ forms a malnormal family in $F$ by Lemma \ref{lem: Fibre product}, and so $H$ is almost malnormal in $\Gamma$ by Lemma \ref{lem: Virtual malnormal}. 
\end{example}

The following construction provides us with the supply of malnormal subgroups that we shall need to prove Theorem \ref{thm: Main theorem}.

\begin{lemma}\label{lem: 3-generator malnormal subgroup}
Let $\Lambda_2\cong \langle \alpha,\beta\rangle$ be free of rank two.  For each integer $N$,  let
\[
q_N:\Lambda_2\to Q_N= \Lambda_2/\langle\!\langle \beta^N\rangle\!\rangle
\]
be the quotient map.  Consider $u=\alpha^\beta(\alpha^{\beta^2})^{-1}$, $v=\alpha^{\beta}(\alpha^{\beta^{-1}})^{-1}$.  For all $N>6$, the subgroup $q_N(\langle \alpha,u,v\rangle)$ is malnormal in $Q_N$ and free of rank 3.
\end{lemma}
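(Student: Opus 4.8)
The plan is to realise $Q_N$ geometrically and then read off both freeness and malnormality from Stallings' fibre-product calculus (Lemmas~\ref{lem: Virtual malnormal} and~\ref{lem: Fibre product}). Write $a=q_N(\alpha)$ and $b=q_N(\beta)$, so that $Q_N=\langle a\rangle\ast\langle b\rangle\cong\Z\ast(\Z/N)$, and let $M$ be the normal closure of $\langle a\rangle$ in $Q_N$, i.e.\ the kernel of the retraction $Q_N\to\Z/N$ that kills $a$. Then $M$ has index $N$, and a Reidemeister--Schreier computation with transversal $\{1,b,\dots,b^{N-1}\}$ (equivalently: $M$ acts freely on the Bass--Serre tree of $Q_N$ with quotient a rose with $N$ petals) shows that $M$ is free on $\{\alpha_k:=b^{-k}ab^{k}\mid k\in\Z/N\}$. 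I identify $M$ with $\pi_1 X$, where $X$ is a wedge of $N$ circles with edges labelled by the $\alpha_k$. Reading $\alpha^{\beta^k}$ as $\alpha_k$, one gets $q_N(u)=\alpha_1\alpha_2^{-1}$ and $q_N(v)=\alpha_1\alpha_{-1}^{-1}$, so $H:=q_N(\langle\alpha,u,v\rangle)=\langle\alpha_0,\alpha_1\alpha_2^{-1},\alpha_1\alpha_{-1}^{-1}\rangle\le M$; the indices $0,1,2,-1$ are pairwise distinct mod $N$ because $N>3$.

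Next I would build the Stallings core graphs. Folding the wedge of loops that spell the three generators of $H$ yields a graph $\Delta_0$ with two vertices $p,q$, a loop at $p$ labelled $\alpha_0$, and three edges $p\to q$ labelled $\alpha_1,\alpha_2,\alpha_{-1}$; this is already immersed in $X$, has no valence-one vertex, and has $\pi_1$ of rank $|E|-|V|+1=3$. Hence it is the core graph of $H$, and $q_N(\langle\alpha,u,v\rangle)$ is free of rank $3$ (in particular torsion-free). More generally $H^{b^i}=\langle\alpha_i,\alpha_{i+1}\alpha_{i+2}^{-1},\alpha_{i+1}\alpha_{i-1}^{-1}\rangle$ has core graph $\Delta_i$, obtained from $\Delta_0$ by shifting every label by $i$; its edge-label set is $L_i=\{\alpha_{i-1},\alpha_i,\alpha_{i+1},\alpha_{i+2}\}$ (four distinct letters since $N\ge4$), each letter occurs exactly once in $\Delta_i$, and every edge of $\Delta_i$ issues from $p_i$.

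For malnormality I would pass to $M$. Since $H\le M$ and $M$ is normal, $\{b^i\mid i\in\Z/N\}$ is a transversal for $H\backslash Q_N/M$ and $H^{b^i}\cap M=H^{b^i}$; so by Lemma~\ref{lem: Virtual malnormal} it suffices to prove the family $\{H^{b^i}\}_{i\in\Z/N}$ malnormal in $M$, and as $H$ is torsion-free this yields \emph{malnormality} of $H$ in $Q_N$, not just almost malnormality. By Lemma~\ref{lem: Fibre product}, applied to $Y=\coprod_i\Delta_i\to X$, this reduces to showing that every non-diagonal component of $Y\times_X Y=\coprod_{i,j}\Delta_i\times_X\Delta_j$ is simply connected. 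Because each label occurs once in each $\Delta_i$, the term $\Delta_i\times_X\Delta_i$ is just the diagonal copy of $\Delta_i$ plus two isolated vertices. For $i\ne j$, the edges of $\Delta_i\times_X\Delta_j$ biject with $L_i\cap L_j$, all of them issue from $(p_i,p_j)$, and each terminates at $(p_i,q_j)$, $(q_i,p_j)$ or $(q_i,q_j)$ according to which coordinate(s) use the loop edge; hence the component of $(p_i,p_j)$ is a star, and so a tree, \emph{unless} two distinct edges run to $(q_i,q_j)$, i.e.\ unless $(L_i\cap L_j)\smallsetminus\{\alpha_i,\alpha_j\}$ has at least two elements.

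The crux, and the only delicate point, is that this last configuration is exactly what $N>6$ excludes. I would simply inspect the arcs $L_i=\{\alpha_{i-1},\dots,\alpha_{i+2}\}$: for $N\ge 7$ and $i\ne j$ one has $|L_i\cap L_j|\le 3$, with equality only at cyclic distance $1$, and a short case check over $j-i\in\{\pm1,\pm2,\pm3\}$ (larger differences exist only when $N\ge 8$ and then force $L_i\cap L_j=\varnothing$) shows $|(L_i\cap L_j)\smallsetminus\{\alpha_i,\alpha_j\}|\le 1$ in every case. For $N=6$ this genuinely fails: taking $j=i+3$, the two length-$4$ arcs wrap around and meet at both ends, so $L_i\cap L_j=\{\alpha_{i-1},\alpha_{i+2}\}$ avoids both $\alpha_i$ and $\alpha_j$, producing a bigon; this is why the hypothesis $N>6$ is sharp. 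Granting the bound, every non-diagonal component of $Y\times_X Y$ is a tree or a point, so $\{H^{b^i}\}$ is malnormal in $M$ and the lemma follows. I expect this arc-bookkeeping, with its off-by-one around $N=6,7$, to be the main obstacle; the rest is mechanical.
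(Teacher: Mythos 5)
Your proof is correct and follows essentially the same route as the paper: pass to the index-$N$ normal free subgroup $M=\ker(Q_N\to\Z/N)$, identify the core graph of $H$ (which is exactly the paper's graph $Y$), and verify via Lemmas \ref{lem: Virtual malnormal} and \ref{lem: Fibre product} that all off-diagonal components of the fibre product of the translated core graphs are trees. The only difference is cosmetic: where the paper computes the fibre products $Y_0\times_X Y_i$ for $i=0,1,2,3$ directly (with figures), you package the same check as the criterion $|(L_i\cap L_j)\smallsetminus\{\alpha_i,\alpha_j\}|\le 1$ on the label sets, and your observation that this genuinely fails at $N=6$ (via $j=i+3$) is a correct and worthwhile addition.
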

\begin{proof}
Consider the images $\bar{\alpha}=q_N(\alpha)$, $\bar{\beta}=q_N(\beta)$, $\bar{u}=q_N(u)$ and $\bar{v}=q_N(v)$.  Let $F$ be the kernel of the retraction $Q_N\to \Z/N$ that maps $\bar{\alpha}\mapsto 0$ and $\bar{\beta}\mapsto 1$.  As above, $F$ may be thought of as the fundamental group  of a graph $X$ with a single vertex, and with $N$ edges $\{e_i\}_{i\in\Z/N}$, on which $\mathbb{Z}/N=\langle \bar{\beta}\rangle$ acts by left translation.   Represent $\langle \bar{\alpha},\bar{u},\bar{v}\rangle$ by the usual immersion of core graphs $\iota:Y\to X$.  As long as $N\geq 4$, the core graph $Y$ is easily computed explicitly using Stallings folds (see Figure \ref{fig: Core Y}), and is seen to have rank 3 as required.

\begin{figure}[htp]
\begin{center}
 \centering \def\svgwidth{75pt}
 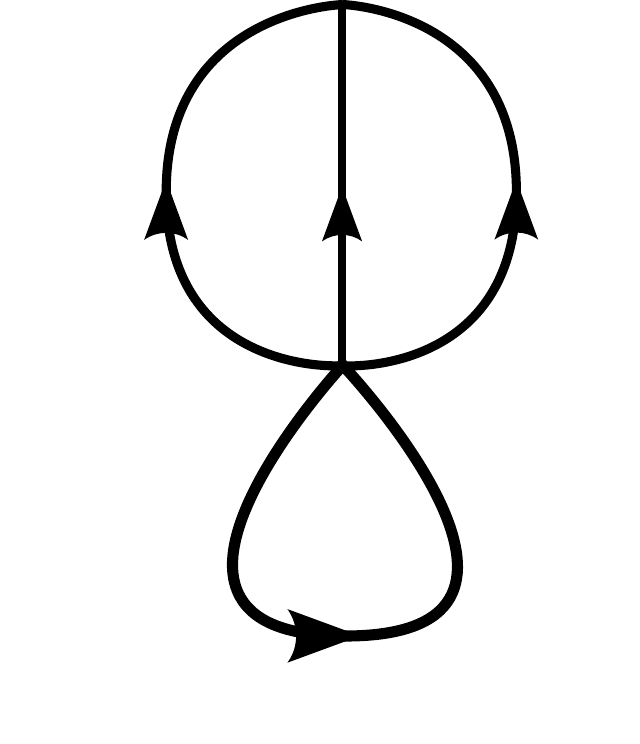
\caption{The core graph $Y$.  The edges are labelled with their images in $X$.}\label{fig: Core Y}
\end{center}
\end{figure}

Let $Y_i=Y$ for each $i=0,\ldots,N-1$, and consider the disjoint union
\[
Z=\coprod_{i=0}^{N-1} Y_i\to X
\]
where the map on $Y_i$ is $\bar{\beta}^i\circ\iota$.  To prove malnormality, it suffices to argue that every off-diagonal component of the fibre product $Z\times_XZ$ is simply connected.

Suppose some off-diagonal component is not simply connected. Translating by an element of $\langle\bar{\beta}\rangle$, we may assume that it arises as part of the fibre product $Y_0\times_X Y_i$ for some $i$. Since the image of $\iota$ only contains the edges $e_{i}$ for $-1\leq i\leq 2$, this fibre product contains no edges unless $0\leq i\leq 3$ (because $N>6$).

Therefore, it is enough to check that the off-diagonal components of $Y_0\times_X Y_0$ are simply connected, and that every component of $Y_0\times_X Y_i$ is simply connected, where $i=1,2,3$.  The off-diagonal components of $Y_0\times_X Y_0$ are points; for $i=1,2,3$, the fibre product $Y_0\times_X Y_i$ has $4-i$ edges, and a direct computation shows that each of these is a forest.  The fibre products $Y_0\times_X Y_0$ and $Y_0\times_X Y_1$ are illustrated in Figure \ref{fig: Fibre products}, while $Y_0\times_X Y_2$ and $Y_0\times_XY_3$ are left as easy computations for the reader. 
\end{proof} 

\begin{figure}[htp]
\begin{center}
 \centering \def\svgwidth{350pt}
 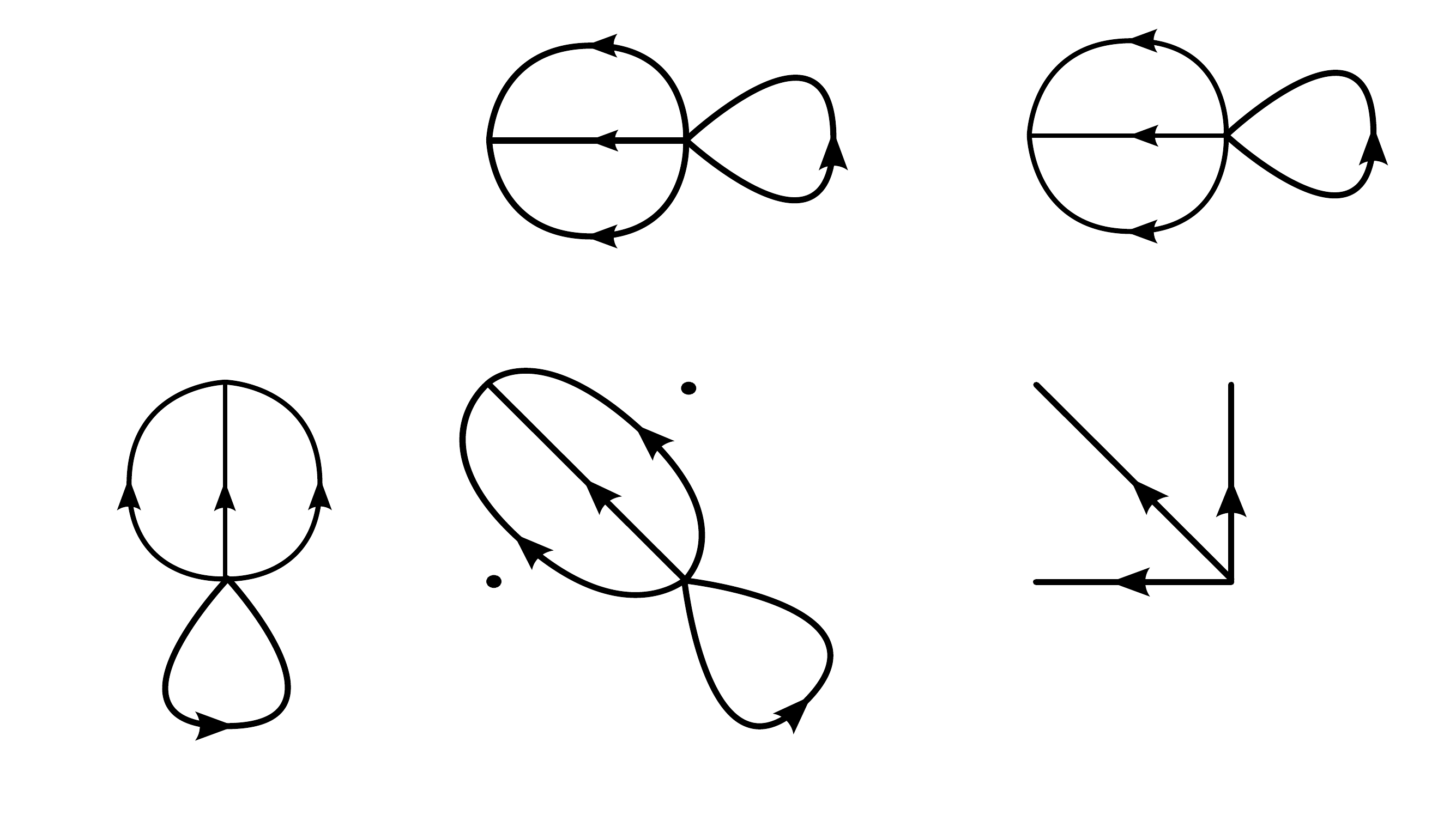
\caption{The fibre products $Y_0\times_X Y_0$ and $Y_0\times_X Y_1$, displayed as subsets of the direct products $Y_0\times Y_0$ and $Y_0\times Y_1$.  Note that the only non-simply-connected component is the diagonal component of $Y_0\times_X Y_0$.}\label{fig: Fibre products}
\end{center}
\end{figure}

From the 3-generator case, we immediately obtain malnormal subgroups with arbitrarily many generators.

\begin{proposition}\label{prop: m+2-generator malnormal subgroup}
Let $\Lambda_2\cong \langle \alpha,\beta\rangle$ be free of rank two.  For each integer $N$,  let
\[
q_N:\Lambda_2\to Q_N= \Lambda_2/\langle\!\langle \beta^N\rangle\!\rangle
\]
be the quotient map. For any $m$, there exist $\{\gamma_0,\ldots,\gamma_{m+1}\}\in [\Lambda_2,\Lambda_2]$ such that, for all $N>6$, the subgroup $q_N(\langle \alpha,\gamma_0,\ldots,\gamma_{m+1}\rangle)$ is malnormal in $Q_N$ and free of rank $m+3$.
\end{proposition}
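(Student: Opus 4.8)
The plan is to bootstrap from the $3$-generator case (Lemma \ref{lem: 3-generator malnormal subgroup}) by taking $m+2$ suitably spread-out copies of the construction $u = \alpha^\beta(\alpha^{\beta^2})^{-1}$, $v = \alpha^\beta(\alpha^{\beta^{-1}})^{-1}$, shifted far apart along the $\beta$-axis. Concretely, fix a spacing constant, say $s=10$, and set $\gamma_k := \alpha^{\beta^{sk}}(\alpha^{\beta^{sk+1}})^{-1}$ together with a second family $\gamma_k' := \alpha^{\beta^{sk}}(\alpha^{\beta^{sk-1}})^{-1}$ (relabelled as $\gamma_0,\ldots,\gamma_{m+1}$ after interleaving); each $\gamma_k$ lies in $[\Lambda_2,\Lambda_2]$ since it is a product of a conjugate of $\alpha$ with the inverse of another conjugate of $\alpha$, hence maps to $0$ in the abelianization. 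Since each $\gamma_k$ (resp.\ $\gamma_k'$) involves only edges $e_{sk}, e_{sk\pm1}$ of the graph $X$ described in the proof of Lemma \ref{lem: 3-generator malnormal subgroup}, and the spacing $s$ exceeds the ``width'' $4$ of a single block, the core graphs representing different blocks occupy disjoint edge-sets of $X$ (for $N$ larger than $s(m+2)$; one absorbs this into the hypothesis by allowing $N$ large — but the statement claims $N>6$, so see below).

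The main step is the fibre-product computation. Let $\iota: Y\to X$ be the immersed core graph of $q_N(\langle\alpha,\gamma_0,\ldots,\gamma_{m+1}\rangle)\cap F$, where $F$ is the kernel of the retraction $Q_N\to\Z/N$. As in Lemma \ref{lem: 3-generator malnormal subgroup}, set $Y_i = Y$ with map $\bar\beta^i\circ\iota$, and $Z = \coprod_{i=0}^{N-1}Y_i\to X$; malnormality of $q_N(\langle\alpha,\gamma_0,\ldots,\gamma_{m+1}\rangle)$ in $Q_N$ follows (via Lemmas \ref{lem: Virtual malnormal} and \ref{lem: Fibre product}) once every off-diagonal component of $Z\times_X Z$ is simply connected. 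Translating by $\langle\bar\beta\rangle$, it suffices to examine $Y_0\times_X Y_i$ for each $i$. Because $Y$ decomposes (as a subgraph of $X$) into the ``$\alpha$-loop at each vertex'' together with $m+2$ well-separated ``blocks'' each of the shape analyzed in Lemma \ref{lem: 3-generator malnormal subgroup}, the fibre product $Y_0\times_X Y_i$ splits as a union of (a) pieces coming from a block of $Y_0$ overlapping a block of $Y_i$ — each of which is, up to the $\bar\beta$-translation matching the block offsets, exactly one of the fibre products $Y_0\times_X Y_j$ ($j=0,1,2,3$) already shown in Lemma \ref{lem: 3-generator malnormal subgroup} to be a forest (off-diagonal) — and (b) pieces where no two blocks overlap, which contain no edges. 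The only non-simply-connected component in the whole of $Z\times_X Z$ is therefore the diagonal of $Y_0\times_X Y_0$, as required. The rank count gives $m+3$: one loop from $\bar\alpha$ plus one from each $\gamma_k$, and the fibre-product argument shows there are no further relations.

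The one genuine wrinkle is the claimed uniformity ``for all $N>6$''. For small $N$ the shifts $\beta^{sk}$ wrap around and distinct blocks can collide. The clean fix is to reduce the general case to the $3$-generator case by a \emph{different} spreading: instead of shifting along $\beta$, compose with a rank-$(m+1)$ free-group trick. Pass to a subgroup of finite index in $Q_N$ on which the relevant conjugates of $\alpha$ become independent primitive elements of a free group, and invoke the case $m=1$ of Lemma \ref{lem: 3-generator malnormal subgroup} inside disjoint edge-neighbourhoods. Alternatively — and this is what I expect the authors do — one simply replaces $\beta$ by $\beta$ only in the defining words but uses $m+2$ \emph{different} subword patterns that are automatically edge-disjoint in $X$ for \emph{every} $N\geq 4$: e.g.\ $\gamma_k = \alpha^{\beta}(\alpha^{\beta^2})^{-1}$ conjugated by a fixed primitive word $w_k$ in $\alpha$ chosen so that the $w_k$-translates of the block $\{e_{-1},e_0,e_1,e_2\}$ in the universal-type cover are disjoint. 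The hard part of the whole proof is precisely this bookkeeping: verifying edge-disjointness of the $m+2$ blocks robustly enough that the off-diagonal fibre products reduce, block by block, to the forests of Lemma \ref{lem: 3-generator malnormal subgroup}, with no new cycles created at block boundaries. Once that combinatorial separation is in place, freeness of rank $m+3$ and malnormality both drop out of Lemma \ref{lem: Fibre product} exactly as in the rank-$3$ case.
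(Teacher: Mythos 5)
Your approach has a genuine gap, and you have in fact put your finger on it yourself: the elements $\gamma_k=\alpha^{\beta^{sk}}(\alpha^{\beta^{sk+1}})^{-1}$ only have edge-disjoint supporting blocks in $X$ when $N$ is large compared with $s(m+2)$, whereas the proposition demands a single bound $N>6$ that is \emph{uniform in $m$}. For $6<N\le s(m+2)$ the blocks wrap around $\Z/N$ and collide, and the entire fibre-product analysis collapses. Neither of your proposed repairs closes this gap: ``pass to a finite-index subgroup on which the conjugates of $\alpha$ become independent primitive elements'' is not an argument (which subgroup? why does malnormality in it imply almost malnormality upstairs for the subgroup you actually want?), and your second guess (conjugating a fixed block by words $w_k$ in $\alpha$) is explicitly flagged as speculation and is not what is needed. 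Even setting aside the uniformity issue, the block-by-block reduction of $Y_0\times_X Y_i$ to the forests of Lemma \ref{lem: 3-generator malnormal subgroup} would require checking that no new cycles arise from components mixing the $\bar\alpha$-loops with block edges and from blocks meeting at their boundary vertices; none of that bookkeeping is carried out.

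The paper avoids all of this with a two-line reduction that requires no new fibre products. Lemma \ref{lem: 3-generator malnormal subgroup} already gives, for every $N>6$, that $q_N(\langle\alpha,u,v\rangle)$ is malnormal in $Q_N$ and free of rank $3$. Now choose \emph{inside the rank-two free group} $\langle u,v\rangle$ any malnormal subgroup $L=\langle\gamma_0,\dots,\gamma_{m+1}\rangle$ of rank $m+2$ (such subgroups exist in $F_2$ for every rank). Then $L*\langle\alpha\rangle$ is malnormal in $\langle u,v\rangle*\langle\alpha\rangle=\langle\alpha,u,v\rangle$, and by transitivity of malnormality (Lemma \ref{lem: Transitive malnormal}) it is malnormal in $Q_N$; it is free of rank $m+3$ because it is a free product of free groups of ranks $m+2$ and $1$. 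Since $u,v\in[\Lambda_2,\Lambda_2]$, every element of $\langle u,v\rangle$, in particular each $\gamma_i$, lies in $[\Lambda_2,\Lambda_2]$. The key insight you are missing is that the passage from $3$ to $m+3$ generators should happen entirely inside the already-constructed malnormal free subgroup $\langle u,v\rangle$, where the ambient combinatorics of $Q_N$ (and hence the dependence on $N$) play no further role.
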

\begin{proof} Let $L=\langle \gamma_0,\ldots,\gamma_{m+1}\rangle$ 
be any rank-$(m+2)$ malnormal subgroup of the free group $\langle u,v \rangle$  
constructed in Lemma \ref{lem: 3-generator malnormal subgroup}. Then $L\ast\<\alpha\>$ is malnormal in 
$\<\alpha,u,v\>$ and hence in $Q_N$, by Lemma \ref{lem: Transitive malnormal}. Also, since $u$ and $v$ lie in $[\Lambda_2,\Lambda_2]$, the $\gamma_i$ 
do as well.
\end{proof}

\section{The proof of Theorem \ref{thm: Profinite groups from words}}\label{s:proof}

In this section we prove Theorem \ref{thm: Profinite groups from words}, following the
strategy laid out in Section \ref{s:strategy}.
  As mentioned in the introduction, Theorem \ref{thm: Main theorem} follows immediately, using Theorem \ref{thm: Slobodskoi}.

We are given
 a finitely presented group $G=\langle A\mid R\rangle=\langle a_1,\ldots, a_m\mid r_1,\ldots, r_n\rangle$ and a word $w\in F(A)$.

\subsection*{Step 1: improving the input}

We start by proving some lemmas that improve the input $G$ and $w$.

\begin{lemma}\label{lem: Injective generators}
There is an algorithm that takes as input a finitely presented group $G\cong\<A\mid R\>$ and a word $w\in F(A)$ and outputs a finite presentation $\langle A^\dagger\mid R^\dagger \rangle$ for a group $G^\dagger $ and a word $w^\dagger \in F(A^\dagger )$ such that:
\begin{enumerate}
\item $w^\dagger =_{\widehat{G}^\dagger }1$ if and only if $w=_{\widehat{G}}1$;
\item if $w^\dagger \neq_{\widehat{G}^\dagger } 1$ then the natural map $\{1\}\sqcup A^\dagger \to\widehat{G}^\dagger $ is an embedding.
\end{enumerate}
\end{lemma}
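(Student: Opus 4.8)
The plan is to adjoin one free generator to $G$ and then replace the generating set by elements whose images in the profinite completion are forced to be distinct and non-trivial by a single cyclic quotient. Concretely, I would set $G^\dagger:=G\ast\langle t\rangle=\langle a_1,\dots,a_m,t\mid R\rangle$ with $t$ a new letter, put $b_i:=a_i t^{i+1}$ for $i=1,\dots,m$, and take $A^\dagger:=\{t,b_1,\dots,b_m\}$. Since $a_i=b_i t^{-(i+1)}$ and $t\in A^\dagger$, the set $A^\dagger$ generates $G^\dagger$, so Tietze transformations applied to $\langle a_1,\dots,a_m,t\mid R\rangle$ produce, algorithmically, a finite presentation $\langle A^\dagger\mid R^\dagger\rangle$ of $G^\dagger$. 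I would then let $w^\dagger\in F(A^\dagger)$ be the word obtained from $w$ by substituting $b_i t^{-(i+1)}$ for every occurrence of $a_i$, so that $w^\dagger=w$ in $G^\dagger$. All of this is uniform in the pair $(G,w)$.

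For item (1), the key point is that a word in $F(A)$ dies in every finite quotient of $G^\dagger$ precisely when it dies in every finite quotient of $G$: restricting a finite quotient of $G^\dagger$ along $G\hookrightarrow G^\dagger$ gives one implication, and extending a finite quotient $\pi\colon G\to Q$ to $G^\dagger\to Q$ by $t\mapsto 1$ gives the other. Since $w^\dagger=w$ in $G^\dagger$, this yields $w^\dagger=_{\widehat{G}^\dagger}1$ if and only if $w=_{\widehat{G}}1$.

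For item (2) I would prove the stronger, unconditional statement that the images of $1,t,b_1,\dots,b_m$ in $\widehat{G}^\dagger$ are pairwise distinct. Fix any $n>m+2$ and let $\rho\colon G^\dagger\to\Z/n$ be the homomorphism with $\rho(a_i)=0$ for all $i$ and $\rho(t)=1$, which exists because $G^\dagger=G\ast\langle t\rangle$. Then $\rho(1)=0$, $\rho(t)=1$ and $\rho(b_i)=i+1$, and the integers $0,1,2,\dots,m+1$ are pairwise incongruent modulo $n$; so this single finite quotient already separates all of these elements, whence $\{1\}\sqcup A^\dagger\to\widehat{G}^\dagger$ is injective.

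There is no genuine obstacle here — this is the easy preprocessing step — but one point is worth flagging: the naive substitution $a_i\mapsto a_i t$ of the discrete case does not suffice. In the discrete argument one replaces ``$a_i\neq 1$'' by ``$a_i$ has infinite order'', which $a_i t$ automatically enjoys in $G\ast\langle t\rangle$; profinitely, however, distinct generators $a_i,a_j$ may already coincide (or become trivial) in $\widehat{G}$, so $a_i t$ and $a_j t$ need not be separated in $\widehat{G}^\dagger$. Using the distinct exponents $i+1$ repairs this at no cost, and, pleasantly, no facts about profinite free products are required anywhere in the argument.
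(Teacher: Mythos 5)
Your proof is correct, and it takes a genuinely different and noticeably lighter route than the paper's. The paper forms the free product of $2m+1$ copies of $G$, takes as new generators the elements $a_{ij}w_{j+m+1}w_{i+j}$ together with the copies $w_j$ of $w$, and separates them by mapping onto $\Sigma^{(1)}*\cdots*\Sigma^{(2m+1)}$, where $\Sigma$ is a finite quotient of $G$ in which $w$ survives; distinctness is read off from the syllable structure of normal forms, and one then invokes residual finiteness of virtually free groups. That argument genuinely needs a quotient in which $w$ survives, which is why conclusion (2) is stated conditionally. Your construction $G^\dagger=G*\langle t\rangle$, $b_i=a_it^{i+1}$, separates $\{1\}\sqcup A^\dagger$ in a single cyclic quotient $\mathbb{Z}/n$, so you obtain (2) unconditionally and need no residual finiteness input; your diagnosis of why the naive choice $a_it$ would fail (the $a_i$ may already be identified in $\widehat{G}$) is also correct, and both directions of (1) are handled properly. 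One caveat worth recording: the lemma is not used purely as a black box downstream. The proof of Proposition \ref{prop: Controlled orders} re-uses the specific virtually free quotient $\eta^\dagger:G^\dagger\to\Sigma^\dagger$ from the paper's proof --- a quotient that is injective on $\{1\}\sqcup A^\dagger$ \emph{and} in which $w^\dagger$ survives --- in order to produce an independent tuple via free-product normal forms. Your separating quotient $\mathbb{Z}/n$ kills $w^\dagger$ and so cannot play that role; the repair is easy (when $w\neq_{\widehat{G}}1$, extend $G\to\Sigma$ to $G*\langle t\rangle\to\Sigma*\langle t\rangle$ and check injectivity on $\{1\}\sqcup A^\dagger$ there by normal forms), but it is an extra step you would need to supply if your construction were substituted into the paper.
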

\begin{proof}
Take $2m+1$ copies $G^{(j)}$ of $G$ and let $a_{ij}$ be the copy of $a_i$ in $G^{(j)}$; similarly, let $w_j$ be the copy of $w$ in $G^{(j)}$.  We will always take the $j$ index modulo $2m+1$.  Set
\[
G^\dagger =G^{(1)}*\ldots *G^{(2m+1)}
\]
and note that $w_j=_{\widehat{G}^\dagger } 1$ if and only if $w=_{\widehat{G}} 1$.  Now consider the following generating set $A^\dagger$ for $G^\dagger $:
\[
\{a_{ij}w_{j+m+1}w_{i+j}\mid 1\leq i\leq m,~1\leq j\leq 2m+1\}\cup\{w_j\mid 1\leq j\leq 2m+1\}~.
\]
Let $G\to \Sigma$ be a finite quotient in which $w$ survives,  let $\eta_j:G^{(j)}\to \Sigma^{(j)}$ be the corresponding quotient of $G^{(j)}$
and consider the free product of the maps $\eta_j$
\[
\eta^\dagger :G^\dagger \to \Sigma^\dagger =\Sigma^{(1)}*\ldots*\Sigma^{(2m+1)}.
\]
Suppose now that
\[
\eta^\dagger (a_{ij}w_{j+m+1}w_{i+j})=\eta^\dagger (a_{i'j'}w_{j' +m+1}w_{i' +j' })
\]
for some $i,j,i' ,j' $.  Because $i,i' <m+1$, the three syllables of the product $a_{ij}w_{j+m+1}w_{i+j}$  lie in different free factors, hence $j=j' $ and $i=i' $.   Similarly, the images of the generators $w_j$  lie in unique and distinct free factors.  Therefore, the restriction of $\eta^\dagger$ to $1\sqcup A^\dagger$ is injective.  Since $\Sigma^\dagger$ is virtually free and hence residually finite, it follows that $1\sqcup A^\dagger$ injects into $\widehat{G}^\dagger$ as required.   Setting $w^\dagger =w_1$ finishes the proof.
\end{proof}


\begin{proposition}\label{prop: Controlled orders}
There is an algorithm that takes as input a finitely presented group $G\cong\<A\mid R\>$ and a word $w\in F(A)$ and outputs a finite presentation $\langle A'\mid R'\rangle$ for a group $G'$ and a word $w'\in F(A')$ such that:
\begin{enumerate}
\item $w'=_{\widehat{G}'}1$ if and only if $w=_{\widehat{G}}1$;
\item if $w\neq_{\widehat{G}}1$ then, for any $N\in\N$, there exists a homomorphism to a finite group $\eta:G'\to Q$ such that:
\begin{enumerate}
\item $o(\eta(a'))=o(\eta(w'))\geq N$ for all $a'\in A'$; and
\item $\langle \eta(a'_i)\rangle\cap \langle \eta(a'_j)\rangle=\langle \eta(a'_i)\rangle\cap \langle \eta(w')\rangle=1$ whenever $i\neq j$.
\end{enumerate}
\end{enumerate}
\end{proposition}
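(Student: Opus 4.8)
The plan is to reduce this to the strengthened omnipotence theorem, Theorem~\ref{thm: Strengthened omnipotence}, by pushing the problem into a virtually free ``proxy'' for $G$: one cannot control the orders of elements directly in finite quotients of an arbitrary finitely presented group, but one can once the relevant elements have been placed inside a virtually free group. First I would apply Lemma~\ref{lem: Injective generators} to replace $(G,w)$ by $(G^\dagger,w^\dagger)$, where $G^\dagger=\<A^\dagger\mid R^\dagger\>$ with $A^\dagger=\{a_1,\dots,a_M\}$, so that $w^\dagger=_{\wh G^\dagger}1$ if and only if $w=_{\wh G}1$ and, when $w^\dagger\ne_{\wh G^\dagger}1$, the set $\{1\}\sqcup A^\dagger$ embeds in $\wh G^\dagger$. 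Then I would adjoin a single new generator $s$, put $G':=G^\dagger\ast\<s\>$, fix distinct positive integers $n_1,\dots,n_M$ and a positive integer $n_w$, and take
\[
A':=\{\,a_i s^{n_i}\mid 1\le i\le M\,\}\cup\{s\}
\]
as the generating set of $G'$, rewriting the defining relations in terms of $A'$ via $a_i=(a_i s^{n_i})s^{-n_i}$ (an algorithmic Tietze transformation). Finally I would set
\[
w':=w^\dagger\, s^{n_w}w^\dagger s^{-n_w},
\]
understood after the same rewriting as a word in $A'$; this is the product of $w^\dagger$ with the conjugate $s^{n_w}w^\dagger s^{-n_w}$ of $w^\dagger$, so $w'$ lies in the normal closure $\langle\!\langle w^\dagger\rangle\!\rangle$. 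Since the restriction to $G^\dagger$ of any finite quotient of $G'$ is a finite quotient of $G^\dagger$, this last point already gives $w^\dagger=_{\wh G^\dagger}1\Rightarrow w^\dagger=_{\wh G'}1\Rightarrow w'=_{\wh G'}1$, which together with Lemma~\ref{lem: Injective generators} yields the implication ``$w=_{\wh G}1\Rightarrow w'=_{\wh G'}1$'' in part~(1).

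For the reverse implication and for part~(2) I would assume $w\ne_{\wh G}1$, so that $w^\dagger\ne_{\wh G^\dagger}1$ and $\{1\}\sqcup A^\dagger\hookrightarrow\wh G^\dagger$. One can then choose a \emph{single} finite quotient $G^\dagger\twoheadrightarrow\Sigma^\dagger$ --- a fibre product of finitely many finite quotients --- in which the images $\bar a_i$ of all the generators are nontrivial, in particular $\bar w^\dagger\ne 1$. Put $\Gamma:=\Sigma^\dagger\ast\<s\>$, which is virtually free, and let $\pi:G'\twoheadrightarrow\Gamma$ be the induced surjection. In the free product $\Gamma$ the element $\pi(a_i s^{n_i})=\bar a_i s^{n_i}$ is cyclically reduced of syllable length $2$, while $\pi(w')=\bar w^\dagger s^{n_w}\bar w^\dagger s^{-n_w}$ is cyclically reduced of syllable length $4$; these, together with $\pi(s)=s$, all have infinite order. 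In particular $\pi(w')\ne 1$, and as $\Gamma$ is residually finite this shows $w'\ne_{\wh G'}1$, completing part~(1).

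Next I would verify that the tuple
\[
\mathcal T:=\big(\pi(w'),\ \pi(a_1 s^{n_1}),\ \dots,\ \pi(a_M s^{n_M}),\ s\big)
\]
is independent in $\Gamma$ in the sense of Definition~\ref{def: Independent}. Passing to the Bass--Serre tree of the splitting $\Gamma=\Sigma^\dagger\ast\<s\>$, the entry $s$ is elliptic whereas every other entry is hyperbolic, so no power of $s$ is conjugate to a power of any other entry. For two hyperbolic entries, powers are conjugate only if the cyclic words of those powers agree up to rotation; but in every power of $\bar a_i s^{n_i}$ each $\<s\>$-syllable equals $s^{\pm n_i}$ with the sign fixed, so (as the $n_i$ are distinct and positive) distinct indices give non-conjugate powers, and in every power of $\pi(w')$ the $\<s\>$-syllables assume both values $s^{n_w}$ and $s^{-n_w}$, so no power of $\pi(w')$ is conjugate to a power of any $\bar a_i s^{n_i}$. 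Hence $\mathcal T$ is independent.

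Finally I would apply Theorem~\ref{thm: Strengthened omnipotence} to $\Gamma$ and $\mathcal T$, obtaining the integer $\kappa$; given $N$, feeding in the constant exponent tuple $(e,\dots,e)$ with $e:=\lceil N/\kappa\rceil$ produces a finite quotient $q:\Gamma\to Q$ with $o(q(t))=\kappa e\ge N$ for every entry $t$ of $\mathcal T$ and $\<q(t)\>\cap\<q(t')\>=1$ for distinct entries $t,t'$. Then $\eta:=q\circ\pi:G'\to Q$ satisfies $o(\eta(a'))=o(\eta(w'))=\kappa e\ge N$ for all $a'\in A'$, which is~(2)(a), and~(2)(b) is immediate since $\eta(a_i s^{n_i})$, $\eta(s)$ and $\eta(w')$ are exactly the $q$-images of the distinct entries of $\mathcal T$. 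I expect the main obstacle to lie not in this last deduction but in the design of $\Gamma$, $w'$ and $A'$ in the first place: $w'$ must be built as a product of conjugates of $w^\dagger$ so that it dies in $\wh G'$ exactly when $w^\dagger$ does yet becomes an element of infinite order in $\Gamma$ once $w^\dagger$ survives in $\Sigma^\dagger$; one must know, via Lemma~\ref{lem: Injective generators}, that $\Sigma^\dagger$ can be chosen to kill no generator; and the generators $a_i s^{n_i}$ and the exponents $n_i$, $n_w$ must be arranged so that the whole tuple $\mathcal T$ is genuinely independent, which is what lets the two cited theorems finish the job.
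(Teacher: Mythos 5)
Your proposal is correct and follows essentially the same route as the paper: reduce via Lemma~\ref{lem: Injective generators}, form $G^\dagger\ast\mathbb{Z}$, twist the generators by the new letter, take $w'$ to be a product of conjugates of $w^\dagger$, pass to the virtually free proxy $\Sigma^\dagger\ast\mathbb{Z}$, and apply Theorem~\ref{thm: Strengthened omnipotence} to an independent tuple (the paper uses $a_i'=a_i^\dagger a_0'$ and $w'=[w^\dagger,a_0']$, deriving independence from the injectivity of $\{1\}\sqcup A^\dagger\to\Sigma^\dagger$ rather than from distinct exponents $n_i$, which is only a cosmetic difference). One small wording slip: ``in particular $\bar w^\dagger\neq 1$'' is not a consequence of the generators surviving in $\Sigma^\dagger$ but an additional condition you must (and easily can) build into the fibre product.
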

\begin{proof}
We may algorithmically construct a presentation $\langle A^\dagger\mid R^\dagger\rangle$ and a word $w^\dagger$ as in Lemma \ref{lem: Injective generators}.  Write $A^\dagger=\{a^\dagger_1,\ldots,a^\dagger_m\}$.  Let $G'=G^\dagger*\langle a'_0\rangle$, let $a'_i=a^\dagger_ia'_0$ for each $i$ and let $w'=[w^\dagger,a'_0]$.  
Assertion (1) is now immediate. 

Let $\eta^\dagger : G^\dagger\to\Sigma^\dagger$ be as in the proof of Lemma \ref{lem: Injective generators} and let $\G = \Sigma^\dagger \ast \< a_0\>$.
We extend $\eta$ to a surjection $\zeta: G'\to \G$ by defining $\zeta(a_0)=a_0$. The
map $\eta$ is injective on  $1 \sqcup A^\dagger$, so by the normal form theorem for free products,  
$(\zeta(a'_0),\ldots,\zeta(a'_m),\zeta(w'))$ is an independent $(m+2)$-tuple in $\G$. To complete the proof, we define $\eta$ to be the composition
of $\zeta$ and the map $q:\G\to Q$ provided by Theorem \ref{thm: Strengthened omnipotence}.
\end{proof}

To avoid being overwhelmed by notation, we rename $G'$ as $G$, $A'=\{a'_0,\ldots, a'_m\}$ as $A$ and $w'$ as $w$.

\subsection*{Step 2: a map $G\to \wh{G}_1$ whose image is trivial iff $w=1$} 

We define a new finitely presented group 
\[
G_1=G*\langle b_0,\ldots, b_m\rangle/\langle\!\langle w^{b_i}=a_i\mid i=0,\ldots, m\rangle\!\rangle
\]
and let $F_0$ denote the subgroup $\langle b_0,\ldots, b_m\rangle$.  Note that there is a retraction $\rho:G_1\to F_0$, whence $F_0$ is free of rank $m+1$.  Note too that there is
a simple algorithm for deriving a finite presentation of $G_1$ from  $G$ and $w$. The following
lemma is clear.

\begin{lemma}
If $w=_{\widehat{G}}1$, then the inclusion map $F_0\hookrightarrow G_1$ and the retraction $\rho$ induce isomorphisms of profinite completions. 
\end{lemma}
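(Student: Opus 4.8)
The plan is to get everything out of the retraction $\rho\colon G_1\to F_0$. Write $\iota\colon F_0\hookrightarrow G_1$ for the inclusion, so $\rho\circ\iota=\mathrm{id}_{F_0}$ and hence, by functoriality of profinite completion, $\widehat{\rho}\circ\widehat{\iota}=\mathrm{id}_{\widehat{F_0}}$ with no hypothesis on $w$. Thus $\widehat{\iota}$ is automatically a split injection and $\widehat{\rho}$ a split surjection, and all that remains is to show $\widehat{\iota}\circ\widehat{\rho}=\mathrm{id}_{\widehat{G_1}}$. For this it suffices to prove that, when $w=_{\widehat G}1$, every homomorphism $\phi\colon G_1\to Q$ from $G_1$ to a finite group kills $\ker\rho$, i.e.\ factors through $\rho$.

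So I would fix such a $\phi$. Its restriction to the canonical image of $G$ in $G_1$ is a homomorphism from $G$ onto a finite group, hence corresponds to a finite quotient of $G$; since $w=_{\widehat G}1$, the element $w$ is trivial in every finite quotient of $G$, so $\phi(w)=1$. Now I apply $\phi$ to the defining relations $a_i=w^{b_i}$ of $G_1$, obtaining $\phi(a_i)=\phi(b_i)^{-1}\phi(w)\phi(b_i)=1$ for every $i$. Hence $\phi$ annihilates the normal closure of the image of $G$ in $G_1$. But this normal closure is precisely $\ker\rho$: quotienting $G_1=\langle A,\,b_0,\dots,b_m\mid R,\ w^{b_i}a_i^{-1}\rangle$ by the normal closure of $A$ trivialises the relators $R$ and the relators $w^{b_i}a_i^{-1}$ (as $w$ and each $a_i$ are words in $A$), leaving the free group $F_0$, and the induced quotient map is exactly $\rho$. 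Therefore $\phi=\bar\phi\circ\rho$ for some $\bar\phi\colon F_0\to Q$.

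To conclude, for any finite quotient $\phi=\bar\phi\circ\rho$ of $G_1$ I would compute $\phi\circ(\iota\circ\rho)=\bar\phi\circ(\rho\circ\iota)\circ\rho=\bar\phi\circ\rho=\phi$. Since this holds for every finite quotient, the endomorphism $\iota\circ\rho$ of $G_1$ induces the identity on $\widehat{G_1}$, that is $\widehat{\iota}\circ\widehat{\rho}=\mathrm{id}_{\widehat{G_1}}$. Together with $\widehat{\rho}\circ\widehat{\iota}=\mathrm{id}_{\widehat{F_0}}$ this shows $\widehat{\iota}$ and $\widehat{\rho}$ are mutually inverse isomorphisms, which is the assertion.

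I do not expect a genuine obstacle here: the argument is a short diagram chase, the only real input being the observation that the HNN relations $a_i=w^{b_i}$ propagate $\phi(w)=1$ to $\phi(a_i)=1$. The one point deserving a word of care is that $G$ need not embed in $G_1$ at all; but this is irrelevant, since the proof only ever invokes finite quotients of $G$, and the image of $G$ in any finite quotient of $G_1$ is automatically such a quotient.
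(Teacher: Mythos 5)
Your argument is correct and is exactly the reasoning the paper has in mind when it declares the lemma ``clear'': since $w=_{\widehat G}1$ forces $\phi(w)=1$ for every homomorphism $\phi$ from $G_1$ to a finite group, the relations $a_i=w^{b_i}$ kill each $\phi(a_i)$, so every finite quotient of $G_1$ factors through $\rho$, and the retraction identity $\rho\circ\iota=\mathrm{id}_{F_0}$ does the rest. Your side remarks (that $\ker\rho$ is the normal closure of the image of $G$, and that injectivity of $G\to G_1$ is irrelevant) are both accurate.
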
 

 If $w\neq_{\widehat{G}}1$ then we have the finite quotient $\eta:G\to Q$ guaranteed by Proposition \ref{prop: Controlled orders}.  We extend $\eta$ to an epimorphism from $G_1$ to the virtually free group $\Gamma_0$ given by the relative presentation  below.  We will continue to denote this epimorphism by $\eta$ and, to further simplify notation, we will use bars to denote the image of an element or a subgroup under $\eta$, so $\eta(w)=\bar{w}$, $\eta(F_0)=\overline{F}_0$ etc.
$$
\G_0 = ( Q, \bar{b}_1, \dots, \bar{b}_m \mid \bar{w}^{\bar{b}_i}=\bar{a}_i \text{ for } i=0,\dots, m)
$$
Note that $o(\bar{w})=o(\bar{a}_i)$ in $Q$, by Proposition  \ref{prop: Controlled orders}, and therefore $\G_0$ is a multiple HNN extension of $Q$.  Let  $\mathcal{X}_0$ be the corresponding graph of groups and let $T_0$ be its Bass--Serre tree.

\begin{lemma}\label{lem: Malnormality of F_0}
If $w\neq_{\widehat{G}}1$ then, for all natural numbers $N$, the group $G_1$ has a virtually free quotient $\eta:G_1\to\Gamma_0$ with the following properties:
\begin{enumerate}
\item for all $a\in A$, $N\leq o(\bar{w})=o(\bar{a})<\infty$;
\item $\overline{F}_0$ is free of rank $m+1$ and malnormal in $\Gamma_0$.
\end{enumerate}
\end{lemma}
\begin{proof}
The map $\eta:G_1\to\Gamma_0$ was constructed above. In the light of Proposition 6.2, the only point that is not immediate is that $\overline{F}_0$ is malnormal.  The quotient of $\Gamma_0$ by $Q$ defines a retraction $\bar{\rho}$ from $\Gamma_0$ to $\overline{F}_0$. By Lemma \ref{lem: Malnormal retract}, it suffices to prove that $Z_{\Gamma_0}(h)\subseteq\overline{F}_0$ for all $h\in\overline{F}_0\smallsetminus 1$.

Suppose therefore that $h\in\overline{F}_0\smallsetminus 1$ and $[h,\gamma]=1$.  Let $h=\bar{b}_{i_1}^{\epsilon_1}\ldots \bar{b}_{i_k}^{\epsilon_k}$, where $\epsilon_i\in\{\pm1\}$ for all $i$.  We may assume that this decomposition is cyclically reduced, and therefore the vertex $*$ in $T_0$ stabilized by $Q$ is on the axis of $h$.

We claim that $\bar{\rho}(\gamma^{-1})\gamma\in Q$. Because $[h,\gamma]=1$, the segment $[*,\gamma*]$ is contained in $\mathrm{Axis}(h)$.  Because $\overline{F}_0$ acts on its minimal invariant subtree with a single orbit of vertices, there exists $\beta\in \overline{F}_0$ such that $\beta*=\gamma*$, and so $\beta^{-1}\gamma\in Q$.  Therefore
\[
1=\bar{\rho}(\beta^{-1}\gamma)=\beta^{-1}\bar{\rho}(\gamma)
\]
and the claim follows.

Since $\bar{\rho}(\gamma)\in Z_{\Gamma_0}(h)$, the claim reduces us to the case that $\gamma\in Q$, in which case $\gamma$ fixes the whole of $\mathrm{Axis}(h)$. But, by item (2)(b) of Proposition \ref{prop: Controlled orders}, no non-trivial element of $\Gamma_0$ fixes a subset of diameter greater than 2 in the minimal $\overline{F}_0$-invariant subtree of $T_0$, and therefore $\gamma=1$. 
\end{proof}

\subsection*{Step 3: the free subgroups $F_1,\, F_2$ and $F$} 

Let $G_2=G_1*\langle t\rangle$ and let $F_1$ be the free subgroup $F_0*\langle t\rangle$ of rank $m+2$.  It will later be convenient to write $b_{m+1}=t$. 
Casting $t$ and $w$ in the roles of $\alpha$ and $\beta$, 
we choose $c_j=\gamma_j$ as in Proposition \ref{prop: m+2-generator malnormal subgroup}, for $j=0,\ldots,m+1$, and write $F_2$ for the subgroup of $G_2$ generated by the $c_j$.

Since $c_j$ is in the commutator subgroup of $\langle t,w\rangle$, we have
$c_j\in \langle\!\langle w\rangle\!\rangle$ and hence the image of $F_2$ in $\widehat{G}_2$ is trivial
 if $w=_{\widehat{G}} 1$.

We analyse what happens
when $w\neq_{\widehat{G}} 1$.  Let $\eta:G_1\to \Gamma_0$ be the virtually free quotient guaranteed by Lemma \ref{lem: Malnormality of F_0}. We will extend $\eta$ to a homomorphism from $G_2$ onto a virtually free group $\Gamma$; we will then continue to denote this homomorphism by $\eta$, and continue to denote $\eta$-images by bars.

Consider the graph of groups $\mathcal{X}$ obtained from $\mathcal{X}_0$ by adjoining a single loop $e_{m+1}$ with trivial edge group; denote the corresponding stable letter by $\bar{t}$ (it will also sometimes be convenient to denote it by $\bar{b}_{m+1}$).   We define
$$\Gamma=\pi_1\mathcal{X}$$ and extend $\eta$ to $\eta: G_2\to\Gamma$ by setting $\eta(t)=\bar{t}$. 

\smallskip

{\em Let $F=\langle F_1,F_2\rangle$.  The remainder of this section is devoted to an analysis of the image $\eta(F)=\overline{F}\subseteq\Gamma$. }

\smallskip 

Let $J_0\lhd\Gamma_0$ be a normal, free subgroup of finite index. We use the canonical retraction $\bar{\rho}:\G_0\to\overline{F}_0$
to modify $J_0$, replacing if with $K_0=J_0\cap\bar{\rho}^{-1}(J_0\cap\overline{F}_0)$. The quotient $K_0\backslash T_0$ is a graph $X_0$ with fundamental group $K_0$; $X_0$ may be thought of as a finite-sheeted covering  space of the graph of groups $\mathcal{X}_0$ (this can be made formal, but we will avoid using it explicitly).  There is a natural vertex-transitive left-action of $P=\Gamma_0/K_0$ on $X_0$, in which the stabilizer of each vertex is conjugate to $Q$ (note that $Q$ embeds into $P$ since $Q\cap K_0=1$).  In particular, fixing a base vertex $*$ for $X_0$, we may identify the vertex set of $X_0$ with the coset space $P/Q$. 

There is a minimal $\overline{F}_0$-invariant subtree $T_0^{\overline{F}_0}\subseteq T_0$. Let $Y_0=(\overline{F}_0\cap K_0)\backslash T_0^{\overline{F}_0}$. The inclusion map descends to a combinatorial map $Y_0\to X_0$.  Picking a base vertex in $Y_0$, this map represents the inclusion $\overline{F}_0\cap K_0\to \Gamma_0$.  In fact, this map is an embedding.

\begin{lemma}\label{lem: Embedding}
The graph $Y_0$ is a regular covering of the rose with $m+1$ petals, and the map $\iota:Y_0\to X_0$ is an embedding.
\end{lemma}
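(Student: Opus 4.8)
The plan is to exploit the retraction $\bar{\rho}:\Gamma_0\to\overline{F}_0$ and the malnormality of $\overline{F}_0$ established in Lemma~\ref{lem: Malnormality of F_0}. First I would observe that $\overline{F}_0\cap K_0$ is a finite-index normal subgroup of the free group $\overline{F}_0$ of rank $m+1$, so its action on $T_0^{\overline{F}_0}$ (a tree on which $\overline{F}_0$ acts freely, cocompactly, with a single orbit of vertices) has quotient $Y_0$ a finite regular covering of the rose with $m+1$ petals; this handles the first assertion and identifies the vertex set of $Y_0$ with the coset space $(\overline{F}_0\cap K_0)\backslash\overline{F}_0$, i.e.\ with $\overline{F}_0/(\overline{F}_0\cap K_0)$, which embeds in $P=\Gamma_0/K_0$ via $f\mapsto fK_0$.

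For the embedding statement I would argue that $\iota$ is injective on vertices and on edges separately, and that no two distinct edges share an endpoint image inappropriately (i.e.\ that $\iota$ is an immersion that is globally injective). On vertices: a vertex of $Y_0$ is an $(\overline{F}_0\cap K_0)$-orbit $\overline{F}_0\cap K_0)\cdot f\ast$ with $f\in\overline{F}_0$, and its image in $X_0=K_0\backslash T_0$ is $K_0\cdot f\ast$, i.e.\ the coset $fK_0\cdot Q\in P/Q$. So two vertices $f\ast,f'\ast$ of $Y_0$ have the same image iff $f^{-1}f'\in K_0Q$, say $f^{-1}f'=kq$ with $k\in K_0$, $q\in Q$; applying $\bar\rho$ and using $\bar\rho(K_0)=K_0\cap\overline{F}_0$ and $\bar\rho(Q)=1$ gives $f^{-1}f'\equiv \bar\rho(k)\bmod{}$, hence $f^{-1}f'\in(\overline{F}_0\cap K_0)$, so the vertices coincide in $Y_0$; this is exactly the computation in Example~\ref{ex: Free product malnormal subgroup} and is the place where the retraction is doing the work. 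The same bookkeeping gives injectivity on edges, since an edge of $T_0^{\overline{F}_0}$ is determined by its endpoints together with its $Q$-translate label, and the stabiliser calculation in Lemma~\ref{lem: Malnormality of F_0}(2)(b) forbids the only coincidences that could otherwise occur (an element fixing a segment of diameter $>2$ must be trivial). I would then note that $\iota$ being a locally injective map of graphs (it descends from an inclusion of trees, hence is an immersion) together with global injectivity on vertices forces global injectivity, so $\iota$ is an embedding.

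The main obstacle I expect is the edge-level analysis: one must check that distinct edges of $Y_0$ cannot be identified in $X_0$ and, more delicately, that the link of each vertex of $Y_0$ injects into the link of its image in $X_0$ — in other words that $\iota$ is an \emph{immersion} and not merely a map that happens to be injective on the vertex set. This is where malnormality of $\overline{F}_0$ (via Lemma~\ref{lem: Malnormal retract}, already used to prove Lemma~\ref{lem: Malnormality of F_0}) is essential: if two edge-germs at $f\ast$ in $T_0^{\overline{F}_0}$ mapped to the same germ in $X_0$, one would produce a nontrivial element of $K_0$ conjugating one edge-stabiliser setup to another, and running it through $\bar\rho$ contradicts the malnormality/stabiliser bounds. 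Modulo that verification, everything else is the routine covering-space and coset-counting argument sketched above, and I would present it in essentially the same style as Example~\ref{ex: Free product malnormal subgroup}.
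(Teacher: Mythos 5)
Your core argument is the paper's argument: the identification of $Y_0$ as the regular cover of the rose with deck group $R=\overline{F}_0/(\overline{F}_0\cap K_0)$, and vertex-injectivity via the double-coset computation (write $f^{-1}f'=kq$ with $k\in K_0$, $q\in Q$, apply $\bar{\rho}$, and use $\bar{\rho}(Q)=1$ together with $\bar{\rho}(K_0)\subseteq J_0\cap\overline{F}_0=K_0\cap\overline{F}_0$) are exactly what the paper does. The only correction concerns your ``main obstacle'', which is not an obstacle at all. First, edge-injectivity amounts to injectivity of $(\overline{F}_0\cap K_0)\backslash\overline{F}_0\to K_0\backslash\Gamma_0/\langle\bar{w}\rangle$, which is the identical bookkeeping with $\langle\bar{w}\rangle$ in place of $Q$; since $\langle\bar{w}\rangle\leq Q$, it is in fact a formal consequence of the vertex case, which is how the paper disposes of it. Second, a combinatorial map of graphs that is injective on vertices and injective on edges is already an embedding, so there is no separate link/immersion condition to verify. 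In particular, neither the malnormality of $\overline{F}_0$ nor the diameter-$2$ stabiliser bound from Proposition \ref{prop: Controlled orders}(2)(b) is used in this lemma --- the retraction $\bar{\rho}$ and the construction of $K_0$ inside the torsion-free group $J_0$ do all the work here; malnormality enters later, in the proof of Proposition \ref{prop: Malnormality of F}.
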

\begin{proof}
Note that $\overline{F}_0$ acts freely on $T_0^{\overline{F}_0}$ and transitively on the vertices.  Therefore, the quotient $\overline{F}_0\backslash T_0^{\overline{F}_0}$ is the rose with $m+1$ petals, and $(\overline{F}_0\cap K_0)\backslash T_0^{\overline{F}_0}$ is a regular covering with deck group $R:=\overline{F}_0/(\overline{F}_0\cap K_0)$.

By standard Bass--Serre theory, the fact that $Y_0\to X_0$ is an embedding reduces to the fact that the natural maps
\[
(\overline{F}_0\cap K_0)\backslash \overline{F}_0\to K_0\backslash\Gamma_0/Q
\]
and
\[
(\overline{F}_0\cap K_0)\backslash \overline{F}_0\to K_0\backslash\Gamma_0/\langle \bar{w}\rangle
\]
are injective.  (More exactly, the injectivity of the first map above implies the injectivity of $Y_0\to X_0$ on vertices, and the injectivity of the second map implies the injectivity of $Y_0\to X_0$ on edges.)  Since the first map factors through the second, it is enough to prove that $(\overline{F}_0\cap K_0)\backslash \overline{F}_0\to K_0\backslash\Gamma_0/Q$ is injective.

Suppose therefore that $f,g\in \overline{F}_0$ and $f=kgq$, where $k\in K_0$ and $q\in Q$.  Applying the retraction $\bar{\rho}:\Gamma_0\to\overline{F}_0$, we deduce that $f=\bar{\rho}(k)g$, which implies that 
\[
q= (k^{-1}\bar{\rho}(k))^g~.
\]
But $q$ has finite order and $k^{-1}\bar{\rho}(k)$ lies in the free group $J_0$. Therefore  $q=1$ and
\[
(\overline{F}_0\cap K_0)f=(\overline{F}_0\cap K_0)g
\]
as required.
\end{proof}

We will identify $Y_0$ with its image in $X_0$, and hence we feel free to (without loss of generality) choose $*$ as the base point for $Y_0$.  Fixing a base point allows us to identify the vertices of $Y_0$ with the elements of $R$.

There is a natural retraction $\sigma:\Gamma\to\Gamma_0$ obtained by setting $\sigma(\bar{t})=1$.  The preimage $K=\sigma^{-1}(K_0)$ is a normal, free subgroup of finite index in $\Gamma$ with $\Gamma/K\cong P$. Let $T$ be the Bass--Serre tree of $\mathcal{X}$.  Then $X=K\backslash T$ is a finite graph which, as before, can be thought of as a  regular, finite-sheeted covering space of $\mathcal{X}$ with deck group $P$.

In fact, there is a simple, concrete description of $X$. Consider the graph of groups $\mathcal{Z}$ with a single vertex, labelled by the finite group $Q$, and a single edge, with trivial edge group.  Its fundamental group is $Q*\mathbb{Z}$, which can be identified with $Q*\langle\bar{t}\rangle$, a subgroup of $\Gamma$.  There is an obvious retraction $Q*\langle\bar{t}\rangle\to Q$ obtained by sending $\bar{t}\mapsto 1$, and the kernel is precisely $(Q*\langle\bar{t}\rangle)\cap K$, a normal, torsion-free subgroup of finite index, with quotient group $Q$. The corresponding covering graph of $\mathcal{Z}$ can be constructed as follows.  Let $Z$ be the graph with one vertex and edges $\{e_q\mid q\in Q\}$.  This admits a natural $Q$-action, where  $Q$ acts freely on the edges $e_q$ by left translation, and its fundamental group can be identified with $(Q*\langle\bar{t}\rangle)\cap K$.  

For each coset $pQ\in P/Q$, let $Z^{pQ}$ be a copy of $Z$.  Now $X$ can be constructed as a quotient
\[
X=\left(X_0\sqcup\coprod_{pQ\in P/Q} Z^{pQ}\right)/\sim
\]
where $\sim$ identifies the unique vertex of $Z^{pQ}$ with the vertex of $X_0$ that corresponds to $pQ$ (i.e.\ $p*$).
The group $P$ acts on $X$; the vertex $p*$ is stabilized by $Q^{p^{-1}}$, which acts freely on the edges of $Z^{pQ}$.

The inclusion $Y_0\to X_0$ provides us with a nice geometric representative for the inclusion of $\overline{F}_0\cap K_0$ into $K_0$.   We next extend this to a nice geometric representative for $\overline{F}\cap K$ in $K$.  

Let $W\to Z$ be an immersion (with basepoints) representing $\langle \bar{t}\rangle*\overline{F}_2=\langle \bar{t},\bar{c}_0,\ldots,\bar{c}_{m+1}\rangle$ as a subgroup of the kernel of the natural retraction $Q*\langle\bar{t}\rangle\to Q$. (Note that this immersion exists because $\overline{F}_2\subseteq\langle\!\langle\bar{t}\rangle\!\rangle$.)   Take copies $W^p\equiv W$, one for each $p\in P$, equipped with maps $W^p\to Z^{pQ}$, chosen so that if $pQ=p'Q$ then the following diagram commutes:
\[\xymatrix{
     W^p\ar@{>}[r]\ar@{>}[d]^{\equiv} & Z^{pQ}\ar@{>}[d]^{p'p^{-1}} \\
     W^{p'}\ar@{>}[r] & Z^{pQ}
}\]
where we note that $p'p^{-1}=(p^{-1}p')^{p^{-1}}\in Q^{p^{-1}}$, which acts on $Z^{pQ}$ as remarked above.  Now let
\[
Y= \left(Y_0\sqcup\coprod_{r\in R} W^r\right)/\sim
\]
where $\sim$ identifies the base vertex of $W^r$ with the vertex $r*\in Y_0$.  (Recall that $R=\overline{F}_0/(\overline{F}_0\cap K_0)$, i.e.\ the image of $\overline{F}_0$ in $P$, as in the proof of Lemma \ref{lem: Embedding}.)   The coproduct of the embedding $Y_0\hookrightarrow X_0$ and the immersions $W^r\to Z^{rQ}$ is an immersion $Y\to X$, since adjacent edges of $Y_0$ and $W^r$ map to distinct edges of $X$.  Taking $*\in Y_0$ as a base vertex for $Y$, the immersion $Y\to X$ represents the inclusion of $\overline{F}\cap K$ into $K$.

\begin{lemma}\label{lem: Free product}
If $w\neq_{\widehat{G}}1$ then $\overline{F}=\overline{F}_1*\overline{F}_2$.
\end{lemma}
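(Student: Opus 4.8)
The plan is to prove that the natural surjection $\overline{F}_1*\overline{F}_2\to\overline{F}$ is an isomorphism by factoring it through an amalgamated product and recognising that amalgam inside $\Gamma$ by Bass--Serre theory.

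\emph{Locating $\overline{F}_2$.} Since $\bar{w}$ lies in the vertex group $Q$ of $\mathcal{X}$ and $\bar{t}$ is the stable letter of the loop $e_{m+1}$ with trivial edge group, the subgroup $\langle\bar{t},\bar{w}\rangle\leq\Gamma$ is the free product $\langle\bar{w}\rangle*\langle\bar{t}\rangle$; as $o(\bar{w})>6$ (Lemma~\ref{lem: Malnormality of F_0}(1), applied with $N=7$), this is precisely the group $Q_N$ of Proposition~\ref{prop: m+2-generator malnormal subgroup}, with $\bar{t},\bar{w}$ in the roles of $\alpha,\beta$ and $\bar{c}_j$ in the role of $\gamma_j$. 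Hence $H:=\langle\bar{t},\bar{c}_0,\ldots,\bar{c}_{m+1}\rangle=\langle\bar{t}\rangle*\overline{F}_2$ is free of rank $m+3$ and malnormal in $\langle\bar{t},\bar{w}\rangle$; in particular $\overline{F}_2$ is free of rank $m+2$ and $\overline{F}_2\cap\langle\bar{t}\rangle=1$.

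\emph{The intersections.} Work in the free-product splitting $\Gamma=\Gamma_0*\langle\bar{t}\rangle$ (adjoining a trivial-edge loop to $\mathcal{X}_0$ is the same as $*\,\langle\bar{t}\rangle$). Uniqueness of normal forms there shows that an element of $\langle\bar{w}\rangle*\langle\bar{t}\rangle$ lies in $\Gamma_0$ only if it is a power of $\bar{w}$, so $\langle\bar{t},\bar{w}\rangle\cap\Gamma_0=\langle\bar{w}\rangle$; since $\overline{F}_0\leq\Gamma_0$ is torsion-free (Lemma~\ref{lem: Malnormality of F_0}(2)) and $\langle\bar{w}\rangle$ is finite, $\overline{F}_0\cap\langle\bar{w}\rangle=1$. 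Comparing a normal form for $g\in\overline{F}_1\cap\langle\bar{t},\bar{w}\rangle$ written in $\overline{F}_0*\langle\bar{t}\rangle$ with one written in $\langle\bar{w}\rangle*\langle\bar{t}\rangle$, syllable by syllable inside $\Gamma_0*\langle\bar{t}\rangle$, forces every $\Gamma_0$-syllable of $g$ to lie in $\overline{F}_0\cap\langle\bar{w}\rangle=1$, whence $\overline{F}_1\cap\langle\bar{t},\bar{w}\rangle=\langle\bar{t}\rangle$; as $\langle\bar{t}\rangle\leq H\leq\langle\bar{t},\bar{w}\rangle$, this gives $\overline{F}_1\cap H=\langle\bar{t}\rangle$.

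\emph{The combination argument.} Observe that $\overline{F}=\langle\overline{F}_1,\overline{F}_2\rangle=\langle\overline{F}_1,H\rangle$ and that abstractly $\overline{F}_1*_{\langle\bar{t}\rangle}H=(\overline{F}_0*\langle\bar{t}\rangle)*_{\langle\bar{t}\rangle}(\langle\bar{t}\rangle*\overline{F}_2)=\overline{F}_1*\overline{F}_2$, so it is enough to show that the surjection $\overline{F}_1*_{\langle\bar{t}\rangle}H\to\langle\overline{F}_1,H\rangle$ is injective. I would verify this from the action of $\Gamma$ on the Bass--Serre tree $\mathcal{T}$ of $\Gamma=\Gamma_0*\langle\bar{t}\rangle$: here $\overline{F}_0\leq\Gamma_0$ fixes the base vertex $v_0$, the element $\bar{t}$ is hyperbolic with axis $\mathcal{A}\ni v_0$, and a direct count of the edges of the minimal subtrees at $v_0$ (using $\overline{F}_0\cap\langle\bar{w}\rangle=1$ and the fact that the loop $e_{m+1}$ has trivial edge group) shows that the minimal invariant subtrees $\mathcal{T}_{\overline{F}_1}$ and $\mathcal{T}_H\subseteq\mathcal{T}_{\langle\bar{t},\bar{w}\rangle}$ both contain $\mathcal{A}$ and meet in exactly $\mathcal{A}$, which is the minimal $\langle\bar{t}\rangle$-subtree. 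The standard Bass--Serre criterion for amalgams --- two subgroups whose minimal subtrees meet precisely in the minimal subtree of their intersection generate their amalgam over that intersection --- then yields $\langle\overline{F}_1,H\rangle\cong\overline{F}_1*_{\langle\bar{t}\rangle}H$, and restricting this isomorphism to the factors shows $\overline{F}=\overline{F}_1*\overline{F}_2$.

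The main obstacle is the last step: verifying cleanly that $\mathcal{T}_{\overline{F}_1}\cap\mathcal{T}_H=\mathcal{A}$ (via the edge count at $v_0$ together with convexity of the intersection) and invoking the combination criterion in exactly the form needed; everything else is normal-form bookkeeping plus the rank and malnormality facts already in hand. Alternatively, one could read the free-product decomposition of the finite-index subgroup $\overline{F}\cap K=\pi_1 Y$ straight off the construction of $Y$ as $Y_0$ with the graphs $W^r$ wedged on at the vertices $r*$, but transferring that back to $\overline{F}$ itself requires essentially the same kind of analysis.
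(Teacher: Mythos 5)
Your preliminary reductions are sound: the identification $H:=\langle\bar{t},\bar{c}_0,\ldots,\bar{c}_{m+1}\rangle=\langle\bar{t}\rangle*\overline{F}_2$ inside $\langle\bar{t}\rangle*\langle\bar{w}\rangle$ via Proposition \ref{prop: m+2-generator malnormal subgroup}, the normal-form computation of $\overline{F}_1\cap H=\langle\bar{t}\rangle$ in $\Gamma_0*\langle\bar{t}\rangle$, and the abstract isomorphism $\overline{F}_1*_{\langle\bar{t}\rangle}H\cong\overline{F}_1*\overline{F}_2$ are all correct. The gap is the combination step. The ``standard Bass--Serre criterion'' you invoke --- that two subgroups whose minimal invariant subtrees meet precisely in the minimal subtree of their intersection generate the amalgam over that intersection --- is not a theorem. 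As stated it fails already for elliptic subgroups (any two subgroups with a common fixed vertex satisfy the hypothesis vacuously), and even in your nondegenerate setting the hypothesis constrains only the identity translate: to show a nontrivial alternating word $f_1h_1f_2h_2\cdots$ survives, one must control how each $f\in\overline{F}_1\smallsetminus\langle\bar{t}\rangle$ moves the subtree $\mathcal{T}_H$ relative to $\mathcal{A}$ (a genuine ping-pong hypothesis), not merely that $\mathcal{T}_{\overline{F}_1}\cap\mathcal{T}_H=\mathcal{A}$. Moreover that equality itself is only asserted, not verified (the minimal subtrees contain many vertices away from $v_0$, so an edge count at $v_0$ does not suffice without a convexity argument). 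You label this step ``the main obstacle'', and it is: as written the proof is incomplete.

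The alternative you mention in your final sentence and then set aside is essentially the paper's proof, and the ``transfer back to $\overline{F}$'' that worries you costs nothing beyond bookkeeping. Since $Y$ is obtained from $Y_0$ by wedging a copy of $W$ at each of the $d=|R|$ vertices, $\chi(Y)=\chi(Y_0)+d\chi(W)-d=d(1-(2m+4))$, so $\pi_1Y=\overline{F}\cap K$ is free of rank $1+d(2m+3)$. The natural surjection $\phi:\overline{F}_1*\overline{F}_2\to\overline{F}$ from a free group of rank $2m+4$ restricts, on the index-$d$ subgroup $\phi^{-1}(\overline{F}\cap K)$, to a surjection between free groups of the same finite rank $1+d(2m+3)$ (by Nielsen--Schreier on the source and the Euler characteristic computation on the target); by the Hopf property of free groups this restriction is injective, and since $\ker\phi\subseteq\phi^{-1}(\overline{F}\cap K)$, so is $\phi$. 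No tree combinatorics or combination theorem is needed; this is exactly the route the paper takes.
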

\begin{proof}
Because free groups are Hopfian, it suffices to prove that $\rk~\overline{F}=\rk~\overline{F}_1+\rk~\overline{F}_2$.   This can be deduced from a computation of the Euler characteristic of $Y$, as follows.

If $d=|R|$, then we have
\begin{eqnarray*}
\chi(Y)&=&\chi(Y_0)+d\chi(W)-d\\
&=& d\left((1-\rk~\overline{F}_0)+(1-(1+\rk~\overline{F}_2))-1\right)\\
&=& d\left(1-(\rk~\overline{F}_0+1+\rk~\overline{F}_2)\right)\\
&=&d(1-(2m+4))~.
\end{eqnarray*}
On the other hand, the fundamental group of $Y$ is $K\cap \overline{F}$, which is of index $d$ in $\overline{F}$.  Therefore
\[
\chi(Y)=d(1-\rk~\overline{F})~.
\]
So $\rk~\overline{F}=2m+4$, which is equal to $\rk~\overline{F}_1+\rk~\overline{F}_2$.
\end{proof}

\subsection*{Malnormality of $\overline{F}$} We shall establish the
malnormality of $\overline{F}$ using the immersion $Y\to X$.  For each left coset $pR\in P/R$,  let $Y^{pR}_0$ be a copy of $Y_0$.  For each coset $pR$ we choose a representative $p_i$ and equip $Y^{pR}_0$ with the inclusion in $X_0$ that is the composition of $p_i$ with the inclusion $Y_0\to X_0$.

Consider
\[
U_0=\coprod_{pR\in P/R} Y^{pR}_0\to X_0~,
\]
the coproduct of the maps described above.     There is a free action of the group $P$ on $U_0$ obtained by insisting that $R$ acts on $Y^R_0$ in the usual way and that $p_i$ takes the base vertex $*_R\in Y^R_0$ to the base vertex $*_{p_iR}\in Y^{p_iR}_0$, and with this definition the map $U_0\to X_0$ is $P$-equivariant. Thus, the vertices of $U_0$ are in bijection with the elements of $P$.  The vertices of $X_0$ are in bijection with $P/Q$, and under this correspondence the map $U_0\to X_0$ on the vertices can be seen as the natural map $P\to P/Q$.

\begin{remark}
Consider the fibre product $U_0\times_{X_0} U_0$.  Note that the map $U_0\to X_0$ represents the family of subgroups $\{\overline{F}_0^{\gamma_i^{-1}}\cap K_0\}$ in $K_0$, where $\gamma_i$ ranges over a set of representatives for $K_0\backslash\Gamma_0/\overline{F}_0$ (which is identified with $P/R$).  Therefore, by Lemmas \ref{lem: Virtual malnormal}, \ref{lem: Fibre product} and \ref{lem: Malnormality of F_0}, the off-diagonal components of $U_0\times_{X_0} U_0$ are simply connected.
\end{remark}

We now consider the same construction for $Y\to X$.  Let $Y^{pR}=Y$ and consider the disjoint union
\[
U=\coprod_{pR\in P/R} Y^{pR}\to X 
\]
where, as before, the map $Y^{pR}\to X$ is the composition of a choice of map $p:X\to X$ with the immersion $Y\to X$.  Alternatively, we can construct $U$ from $U_0$ by attaching copies of $W$ as follows:
\[
U= \left(U_0\sqcup\coprod_{p\in P}W^p\right)/\sim
\]
where $\sim$ identifies the vertex $p*_R\in U_0$ with the base vertex of $W^p$.

The map $U\to X$ represents the family of subgroups $\{\overline{F}^{\gamma_i^{-1}}\cap K\}$ in $K$, where $\gamma_i$ ranges over a set of representatives for $K\backslash\Gamma/\overline{F}=P/R$; therefore, we will be able to prove the malnormality of $\overline{F}$ by considering the fibre product $U\times_X U$.

We can obtain a clearer picture of the map $U\to X$ by first gathering together those copies of $W$ whose images adjoin the same vertex of $X$.  Let
\[
V=\bigcup_{q\in Q} W^q\subseteq U
\]
and note that
\[
U=U_0\cup\bigcup_{p_iQ\in P/Q}p_iV~.
\]
Then $p_iV$ is precisely the preimage of $Z^{p_iQ}\subseteq X$ under the map $U\to X$.

\begin{lemma}\label{lem: Malnormal V}
If $N>6$ then the off-diagonal components of $V\times_Z V$ are simply connected.
\end{lemma}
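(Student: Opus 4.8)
The plan is to read the fibre product $V\times_Z V$ through the Stallings dictionary of Lemma~\ref{lem: Fibre product} and thereby reduce the claim to a single malnormality statement inside the free product $\Gamma':=\pi_1\mathcal{Z}=Q*\langle\bar{t}\rangle$, which is then supplied by Proposition~\ref{prop: m+2-generator malnormal subgroup} together with Example~\ref{ex: Free product malnormal subgroup}. First I would pin down which family of subgroups the immersion $V\to Z$ represents. By construction $V=\coprod_{q\in Q}W^q$: the copies $W^q$ with $q$ in the subgroup $Q\le P$ are glued to the pairwise-distinct vertices $q\cdot *$ of $U_0$, so they are disjoint in $U$, and after the identification $Z=Z^{Q}$ the restriction of $U\to X$ to $W^q$ is the composite of the immersion $W\to Z$ with the deck transformation $q$ of $Z$ (left translation of the edges $\{e_q\}$) — this is precisely what the commuting square defining the maps $W^p\to Z^{pQ}$ asserts when $p,p'\in Q$. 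Choosing $*$ as basepoint and using that the deck transformation $q$ induces on $\pi_1(Z,*)$ the automorphism ``conjugation by $q$'' for $q\in Q\subseteq\Gamma'$, the immersion $V\to Z$ represents the family $\{\,qHq^{-1}\mid q\in Q\,\}$ of subgroups of $\pi_1(Z,*)$, where $H:=\langle\bar{t},\bar{c}_0,\dots,\bar{c}_{m+1}\rangle=\langle\bar{t}\rangle*\overline{F}_2$ is the subgroup of $\Gamma'$ carried by $W\to Z$. By Lemma~\ref{lem: Fibre product} it then suffices to prove that $\{qHq^{-1}\}_{q\in Q}$ is a malnormal family in $\pi_1(Z,*)$.

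Next I would transfer this to the overgroup $\Gamma'=Q*\langle\bar{t}\rangle$. Recall that $\pi_1(Z,*)$ is the kernel of the retraction $\Gamma'\to Q$ sending $\bar{t}\mapsto 1$, hence is normal of index $|Q|$, with $Q\subseteq\Gamma'$ a transversal; and since each $\bar{c}_j$ is a product of commutators in $\bar{t}$ and $\bar{w}$, its image in $Q$ is a product of commutators in $\bar{w}$ alone and so is trivial. Thus $H\subseteq\pi_1(Z,*)$, whence $qHq^{-1}\subseteq\pi_1(Z,*)$ for every $q\in Q$ by normality, and (because $H$ maps trivially to $\Gamma'/\pi_1(Z,*)\cong Q$) a set of double-coset representatives for $H\backslash\Gamma'/\pi_1(Z,*)$ is given by $Q$; so, up to reindexing, $\{qHq^{-1}\}_{q\in Q}$ is exactly the family $\{H^{q}\cap\pi_1(Z,*)\}$ appearing in Lemma~\ref{lem: Virtual malnormal}. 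Therefore this family is almost malnormal in $\pi_1(Z,*)$ if and only if $H$ is almost malnormal in $\Gamma'$; and since $H$ and all its conjugates are free of finite rank, hence torsion-free, ``almost malnormal'' here coincides with ``malnormal''. The proof has thus been reduced to showing that $H$ is almost malnormal in $\Gamma'=Q*\langle\bar{t}\rangle$.

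For this last step: the generators $\bar{t},\bar{c}_0,\dots,\bar{c}_{m+1}$ all lie in the subgroup $\langle\bar{w}\rangle*\langle\bar{t}\rangle\le\Gamma'$, which is naturally isomorphic to $\langle t,w\mid w^{o(\bar{w})}\rangle$, and $o(\bar{w})>6$ by Lemma~\ref{lem: Malnormality of F_0}(1) together with the hypothesis $N>6$. Applying Proposition~\ref{prop: m+2-generator malnormal subgroup} with the parameter $o(\bar{w})$ and with $t,w$ in the roles of $\alpha,\beta$ shows that $H$ is malnormal in $\langle\bar{w}\rangle*\langle\bar{t}\rangle$; Example~\ref{ex: Free product malnormal subgroup} (with $A=Q$ and $B=\langle\bar{w}\rangle$) shows that $\langle\bar{w}\rangle*\langle\bar{t}\rangle$ is almost malnormal in $Q*\langle\bar{t}\rangle$; and Lemma~\ref{lem: Transitive malnormal} combines these two facts to conclude that $H$ is almost malnormal in $\Gamma'$, which is what remained.

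The step I expect to demand the most care is the identification carried out in the first paragraph: checking that $V$ is genuinely the disjoint union of the $W^q$ with $q$ ranging over the \emph{subgroup} $Q$ of $P$ (rather than over some coset), that under $Z=Z^{Q}$ the map $W^q\to Z$ really is the postcomposition of $W\to Z$ with the deck transformation $q$, and that passing to fundamental groups turns this deck action into honest conjugation by $Q$ inside $\Gamma'$ — so that the family represented is precisely $\{qHq^{-1}\}_{q\in Q}$ and not some other collection of conjugates. Once that translation is secured, everything else is an assembly of results already in hand: Lemmas~\ref{lem: Fibre product}, \ref{lem: Virtual malnormal} and \ref{lem: Transitive malnormal}, Proposition~\ref{prop: m+2-generator malnormal subgroup}, and Example~\ref{ex: Free product malnormal subgroup}.
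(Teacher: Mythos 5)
Your proposal is correct and follows the same route as the paper: the paper's proof likewise invokes Lemmas \ref{lem: Virtual malnormal} and \ref{lem: Fibre product} to translate the statement into the claim that $\langle\bar{t}\rangle*\overline{F}_2$ is (almost) malnormal in $Q*\langle\bar{t}\rangle$, and then deduces that from Proposition \ref{prop: m+2-generator malnormal subgroup}, Example \ref{ex: Free product malnormal subgroup} and Lemma \ref{lem: Transitive malnormal}. You have simply written out in full the bookkeeping (identifying the family $\{H^q\}_{q\in Q}$ represented by $V\to Z$ and the double-coset analysis) that the paper compresses into a single sentence.
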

\begin{proof}
By Lemmas \ref{lem: Virtual malnormal} and \ref{lem: Fibre product}, this is equivalent to the claim that $\langle \bar{t}\rangle*\overline{F}_2\subseteq \langle\bar{t}\rangle*\langle\bar{w}\rangle$ is malnormal in $\langle\bar{t}\rangle*Q$.  This follows from Proposition \ref{prop: m+2-generator malnormal subgroup}, Example \ref{ex: Free product malnormal subgroup} and Lemma \ref{lem: Transitive malnormal}.
\end{proof}

The fibre product $U\times_X U$ decomposes as
\[
U\times_X U= (U_0\times_{X_0}U_0)\cup\coprod_{p_iQ\in P/Q} (p_iV\times_{Z^{p_iQ}} p_iV)
\]
and the diagonal components of $U\times_X U$ consist of precisely the diagonal components of the fibre products on the right hand side of the equation.

\begin{proposition}\label{prop: Malnormality of F}
If $N>6$ and $w\neq_{\widehat{G}}1$ then $\overline{F}$ is malnormal in $\Gamma$.
\end{proposition}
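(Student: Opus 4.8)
The plan is to translate the malnormality of $\overline F$ into a statement about the fibre product $U\times_X U$ and then to assemble that statement from the two simple-connectedness facts already in place: the Remark concerning $U_0\times_{X_0}U_0$ and Lemma~\ref{lem: Malnormal V} concerning $V\times_Z V$. For the reduction, note first that $\overline F$ is free by Lemma~\ref{lem: Free product}, hence torsion-free, so it suffices to prove that $\overline F$ is \emph{almost} malnormal in $\Gamma$; and since $K$ is a torsion-free normal subgroup of finite index, Lemma~\ref{lem: Virtual malnormal} reduces this to the malnormality in $K$ of the family $\{\overline F^{\gamma_i}\cap K\}$ indexed by $K\backslash\Gamma/\overline F\cong P/R$. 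As recorded above, this is precisely the family represented by the immersion $U\to X$, so by Lemma~\ref{lem: Fibre product} the proposition reduces to showing that every off-diagonal (equivalently, non-diagonal) component of $U\times_X U$ is simply connected.

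Next I would spell out the combinatorial shape of $U\times_X U$. Since $U=U_0\cup\coprod_{p\in P}W^p$ with each $W^p$ attached to $U_0$ only at its base vertex $p*_R$, and since edges of $U_0$ and of the $W^p$ map to disjoint parts of $X$, the fibre product is the union of $U_0\times_{X_0}U_0$ with the pieces $W^p\times_{Z^{pQ}}W^{p'}$, one for each ordered pair with $pQ=p'Q$, where each such piece meets $U_0\times_{X_0}U_0$ in the single vertex $(p*_R,p'*_R)$ and distinct pieces are otherwise pairwise disjoint. Because $U\to X$ is an immersion, the diagonal $\Delta U$ is a full component of $U\times_X U$, so any off-diagonal component $C$ contains no vertex of the form $(u,u)$.

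I would then finish by analysing such a $C$. If $C$ is disjoint from $U_0\times_{X_0}U_0$, then $C$ is a component of some $W^p\times_{Z^{pQ}}W^{p'}$, hence of the corresponding translate $p_iV\times_{Z^{p_iQ}}p_iV$, and so is simply connected by Lemma~\ref{lem: Malnormal V}. Otherwise, since each piece $W^p\times_{Z^{pQ}}W^{p'}$ meets the $U_0$-part in a single vertex, any path in $C$ that enters such a piece must leave it at that same vertex; it follows that $C$ meets exactly one component $C_0$ of $U_0\times_{X_0}U_0$, that $C_0$ is off-diagonal and hence simply connected by the Remark on $U_0\times_{X_0}U_0$, and that every vertex of $C_0$ carries exactly one piece — necessarily one with $p\neq p'$, since $C_0$ has no diagonal vertex — whose component through that vertex is off-diagonal and hence simply connected by Lemma~\ref{lem: Malnormal V}. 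As distinct vertices of $C_0$ carry distinct pieces, $C$ is built from the simply-connected graph $C_0$ by attaching simply-connected graphs, each along a single vertex and at distinct attaching points, so van Kampen's theorem gives $\pi_1 C=1$.

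I expect the real work to lie in this last assembly step: granted the Remark on $U_0\times_{X_0}U_0$ and Lemma~\ref{lem: Malnormal V}, what remains is to exploit the $P$-equivariant structure of $U\to X$ carefully enough to see (i) that the diagonal components do not leak into the off-diagonal ones, and (ii) that every off-diagonal component of $U\times_X U$ is genuinely assembled from simply-connected pieces in a tree-like pattern — single shared vertices, distinct attaching points — so that simple-connectedness propagates through van Kampen. The hypotheses $N>6$ and $w\neq_{\widehat G}1$ enter only via the facts they underpin, namely Lemma~\ref{lem: Malnormal V}, Lemma~\ref{lem: Free product} and the very existence of $\Gamma$.
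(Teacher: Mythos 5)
Your proposal is correct and follows essentially the same route as the paper: both reduce, via Lemmas \ref{lem: Virtual malnormal} and \ref{lem: Fibre product}, to showing that the off-diagonal components of $U\times_X U$ are simply connected, and both deduce this from the Remark on $U_0\times_{X_0}U_0$ together with Lemma \ref{lem: Malnormal V}, using the fact that each piece $W^p\times_{Z^{pQ}}W^{p'}$ meets $U_0\times_{X_0}U_0$ in a single vertex. The only difference is presentational: the paper argues by contradiction, extracting a maximal excursion of a geodesic loop into a $W$-piece and showing it is an essential off-diagonal loop in $V\times_Z V$, whereas you package the same structural facts as a tree-like decomposition and invoke van Kampen.
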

\begin{proof}
By Lemmas \ref{lem: Virtual malnormal} and \ref{lem: Fibre product}, it suffices to show that every off-diagonal component of the fibre product $U\times_X U$ is simply connected.

Suppose therefore that $\delta$ is a geodesic loop in an off-diagonal component of $U$.   The fibre product is equipped with two projections $\pi_1,\pi_2:U\times_X U\to U$ and a $P$-action.  Let $\delta_i=\pi_i\circ\delta$.    Translating by an element of $P$, we may assume that $\delta_1$ is contained in $Y^{R}$.

If $\delta_1\subseteq Y^R_0\subseteq Y^R$ then $\delta_2\subseteq Y^{pR}_0\subseteq Y^{pR}$ for some $p\in P$, so $\delta$ is an essential off-diagonal loop in $U_0\times_{X_0}U_0$, which contradicts the fact that $\overline{F}_0$ is malnormal in $\Gamma$.  Therefore,  $\delta_1$ has a non-trivial subpath contained in $W^r$ for some $r\in R$.  Let $\alpha_1$ be a maximal such subpath, let $\alpha$ be the subpath of $\delta$ with $\pi_1\circ\alpha=\alpha_1$ and let $\alpha_2=\pi_2\circ\alpha$.

The endpoints of $\alpha_1$ lie in $W^r\cap Y^R_0\subseteq Y$; this intersection is a point, and hence $\alpha_1$ is a loop in $W^r$.  Likewise, the endpoints of $\alpha_2$ lie in $W^p\cap Y^{pR}_0$, which is also a point, and so $\alpha_2$ is a loop in $W^p$.  Since they have the same image in $X$ it follows that $p=rq$ for some $q\in Q$.  The loop $r^{-1}\delta$ is then a non-trivial loop in an off-diagonal component of  $V\times_ZV$, which contradicts Lemma \ref{lem: Malnormal V} (since $N>6$).
\end{proof}

\subsection*{Step 4: the end of the proof of Theorem \ref{t:tech}} We take two copies of $G_2$,
distinguishing elements and subgroups of the second by primes, and define 
$G_{w}$ to be the quotient of $G_2*G'_2$  by the relations
\[
\{c_i=b'_i, b_i=c'_i\mid i=0,\ldots,m+1\}~.
\]

If $w=_{\widehat{G}}1$, it is clear that $\widehat{G}_w\cong 1$.

Suppose that $w\neq_{\widehat{G}} 1$. Then $G_w$ is the amalgamated product
\[
G_2*_{F\cong F'} G'_2
\]
where the isomorphism $F\cong F'$ sends $b_i$ to $c'_i$ and $c_i$ to $b'_i$ for $0\leq i\leq {m+1}$. The  map $\eta:G_2\to\Gamma$ constructed at the beginning of Step 3 is injective on $F$, so we obtain an epimorphism
\[
G_w\to\Gamma*_{\overline{F}=\overline{F}'}\Gamma'~.
\]
The latter is an amalgam of virtually free groups along malnormal subgroups, 
and Wise \cite[Theorem 1.3]{wise_residual_2002} proved that
such amalgams are residually finite.  Therefore $\widehat{G}_w\ncong 1$, as required.
\qed

\section{Non-positively curved square complexes}\label{s:npc}

In this section we strengthen Theorem \ref{t:main} by proving that the existence of finite-index subgroups remains undecidable among the fundamental groups of compact, non-positively curved square complexes.  More precisely, we will prove the geometric form of this result stated in the introduction as Theorem \ref{t:covers}.

The arguments in this section are topological in nature and the basic construction is close in spirit to earlier constructions by Kan and Thurston \cite{kan_every_1976}, Leary \cite{leary_metric_2013} and others: the key point in each case is that one replaces a disc in some standard topological construction by a more complicated space that is equally as {\em inessential} as a disc from one point of view but at the same time admits geometric or topological properties that are more desirable from the point of view of the application at hand. In our setting, the standard construction is that of the 2-complex canonically associated to a group presentation, the desirable property is non-positive curvature, and the appropriate notion of {\em inessential} is having a profinitely trivial fundamental group, i.e.~the spaces that replace the disc should have no connected finite-sheeted coverings.

\subsection{An adaptation of the standard 2-complex}
Let
\[
\mathcal{P}\equiv\<a_1,\dots,a_n\mid r_1,\dots,r_m\>
\]
be a finite presentation for a group $G=|\mathcal{P}|$.  
The standard 2-complex $K(\mathcal P)$ with fundamental group $G$ is defined as follows: it has a single vertex, a 1-cell for each generator -- oriented and labelled $a_i$ -- and a 2-cell for each relator, attached along the edge-loop labelled by the word $r_j$, which we may assume to be cyclically reduced.   In what follows, it will be useful to have a name, $R(a_1,\dots,a_n)$ or, more briefly, $R(\underline{a})$, for the 1-skeleton of  $K(\mathcal P)$.

Let $X$ be a compact, non-positively curved square complex with $S=\pi_1X$ infinite but $\widehat{S}\cong 1$ (such as the examples of \cite{burger_finitely_1997} or \cite{wise_complete_2007}) and fix some edge-loop $\gamma:\mathbb{S}^1\to X^{(1)}$ in the 1-skeleton that is a local geodesic in $X$, based at a vertex.

\begin{definition}
Let $S(\mathcal{P})$ be the space obtained by attaching $m$ copies of $X$ to $R(\underline{a})$, with the $j$-th copy attached by a cylinder joining $\gamma$ to the edge-loop in $R(\underline{a})$ labelled $r_j$. More formally, writing $\rho_j: \mathbb{S}^1\to R(\underline{a})$  for this last loop, we define $\sim$ to be the equivalence relation on 
\[
R(\underline{a})\coprod \big(\mathbb{S}^1\times [0,1]\big)\times\{1,\dots,m\} \coprod X\times \{1,\dots,m\}
\]
defined by
\[
\forall t\in \mathbb{S}^1\, \forall j\in\{1,\dots,m\}\ :\  \rho_j(t)\sim (t,0,j) {\rm{~and~}} (t,1,j)\sim (\gamma(t),j),
\]
and define $S(\mathcal{P})$ to be the quotient space.  Define $G_S:=\pi_1S(\mathcal{P})$.
\end{definition}

\begin{remarks}
\begin{enumerate}
\item For any fixed choice of $\gamma$, the construction of $S(\mathcal{P})$ from $\mathcal{P}$ is algorithmic.

\item There is a continuous map $\rho:S(\mathcal{P})\to K(\mathcal{P})$ that is the identity on $R(\underline{a})$, sends each copy of $X$ to a point in the interior of the corresponding 2-cell of $K(\mathcal{P})$, and maps the
interior of each attaching cylinder homeomorphically to the interior of a punctured 2-cell. This map induces  epimorphisms $\rho_*:G_S\to G$ and $\widehat{\rho}_*:\widehat{G}_S\to \widehat{G}$.
\end{enumerate}
\end{remarks}

\begin{lemma}\label{lem: Profinite isomorphism}
The map $\widehat{\rho}_*:\widehat{G}_S\to \widehat{G}$ is an isomorphism.
\end{lemma}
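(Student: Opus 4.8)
The plan is to treat surjectivity and injectivity of $\widehat{\rho}_*$ separately. Surjectivity is free: the preceding remark already records that $\rho_*\colon G_S\to G$ is onto, hence so is $\widehat{\rho}_*$. So the whole content is the injectivity of $\widehat{\rho}_*$, and for this I would invoke the standard criterion that $\widehat{\rho}_*$ is injective provided every homomorphism from $G_S$ to a finite group factors through $\rho_*$. Indeed, if $q\colon G_S\to Q$ is a finite quotient and $q=\bar q\circ\rho_*$, then $\widehat q=\widehat{\bar q}\circ\widehat{\rho}_*$, so $\ker\widehat{\rho}_*\subseteq\ker\widehat q$; as this holds for every finite quotient $q$ of $G_S$, and the kernels of these quotients have trivial intersection in $\widehat{G}_S$, we conclude $\ker\widehat{\rho}_*=1$.

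So the task is to identify $\ker\rho_*$, which I would do by running van Kampen's theorem through the construction of $S(\mathcal{P})$. Let $X_1,\dots,X_m$ be the $m$ copies of $X$ and let $C_j$ be the cylinder joining the loop $\rho_j$ in $R(\underline{a})$ to the loop $\gamma$ in $X_j$. Building $S(\mathcal{P})$ by attaching $C_j\cup X_j$ to $R(\underline{a})$ along $\rho_j$, one copy at a time, and using that $C_j\cup X_j$ deformation retracts onto $X_j$ while the core circle of $C_j$ represents $r_j$ on one side and $[\gamma]$ on the other, van Kampen gives inductively
\[
G_S\;\cong\;\bigl(F(\underline{a})*\pi_1X_1*\cdots*\pi_1X_m\bigr)\big/\langle\!\langle\, r_j[\gamma]_j^{-1}\ :\ j=1,\dots,m\,\rangle\!\rangle,
\]
where $[\gamma]_j$ denotes the class of $\gamma$ in the $j$-th free factor. (No $\pi_1$-injectivity of $\gamma$ or $\rho_j$ is needed; van Kampen supplies this presentation in any case.) Writing $S_j\le G_S$ for the image of $\pi_1X_j$, killing the free factors $\pi_1X_j$ turns each relator $r_j[\gamma]_j^{-1}$ into $r_j$, so $G_S/\langle\!\langle S_1\cup\cdots\cup S_m\rangle\!\rangle\cong F(\underline{a})/\langle\!\langle r_1,\dots,r_m\rangle\!\rangle=G$ and the quotient map is exactly $\rho_*$. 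Therefore $\ker\rho_*=\langle\!\langle S_1\cup\cdots\cup S_m\rangle\!\rangle$.

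Finally I would close the argument with the hypothesis $\widehat S\cong 1$: given any homomorphism $q\colon G_S\to Q$ with $Q$ finite and any $j$, the image $q(S_j)\le Q$ is a finite quotient of $\pi_1X_j\cong S=\pi_1X$, and since $\widehat S\cong 1$ the only finite quotient of $S$ is trivial, so $q(S_j)=1$. Thus $q$ vanishes on $\bigcup_j S_j$, hence on the normal closure $\ker\rho_*=\langle\!\langle S_1\cup\cdots\cup S_m\rangle\!\rangle$, so $q$ factors through $\rho_*$; by the first paragraph, $\ker\widehat{\rho}_*=1$ and the lemma follows. I do not anticipate a real obstacle here: the only mildly delicate point is the van Kampen bookkeeping that pins down $\ker\rho_*$ as the normal closure of the copies of $\pi_1X$, and the decisive structural input — that these copies admit no non-trivial finite quotient — is precisely the property for which $X$ was chosen.
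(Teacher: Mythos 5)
Your argument is correct and is essentially the paper's own proof: the paper likewise reduces the lemma to showing that every homomorphism from $G_S$ to a finite group factors through $\rho_*$, and concludes because $\widehat{S}\cong 1$ forces each $f(S_j)=1$. You merely make explicit the van Kampen bookkeeping identifying $\ker\rho_*$ with the normal closure of the $S_j$, which the paper leaves implicit.
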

\begin{proof}
It is enough to show that any homomorphism $f$ from $G_S$ to a finite group factors through $\rho_*$. By construction, $S$ has no finite quotients, so $f(S_j)=1$ where $S_j\cong S$ is the fundamental group of the copy of $X$ in $S(\mathcal{P})$ indexed by $j\in\{1,\dots,m\}.$
\end{proof}

\begin{lemma}\label{lem: NPC square complex}
For any finite group presentation $\mathcal{P}$, the space $S(\mathcal{P})$ 
has the structure of  a finite, non-positively curved square complex.
\end{lemma}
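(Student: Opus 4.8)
The plan is to equip $S(\mathcal{P})$ with an explicit square-complex structure and then verify Gromov's link condition vertex by vertex. The first and conceptually most important point is that the cylinders cannot be attached combinatorially without subdividing, because the relator loop $\rho_j$ and the loop $\gamma$ need not have the same combinatorial length. Writing $\ell$ for the number of edges of $\gamma$ and $L_j$ for the length of the cyclically reduced relator $r_j$, I would subdivide each edge of $R(\underline{a})$ into $\ell$ edges, replace the $j$-th copy of $X$ by its $L_j$-fold cubical subdivision $X_j$ (each edge into $L_j$ edges, each square into $L_j^2$ squares), and give the $j$-th cylinder $\mathbb{S}^1\times[0,1]$ the product square structure with $\mathbb{S}^1$ cut into $\ell L_j$ edges and $[0,1]$ a single edge. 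After these subdivisions $\rho_j$ and the copy of $\gamma$ in $X_j^{(1)}$ each traverse exactly $\ell L_j$ edges, so both attaching maps of the $j$-th cylinder are combinatorial and $S(\mathcal{P})$ is a finite square complex. I would record two standard facts for later use: a cubical subdivision of a non-positively curved square complex is again non-positively curved (links of old vertices are unchanged, links of new vertices are complete bipartite graphs $K_{2,t}$ or $4$-cycles), so each $X_j$ is non-positively curved; and a local geodesic stays a local geodesic under cubical subdivision, since the underlying metric space is merely rescaled, so the copy of $\gamma$ in $X_j$ is a local geodesic --- and hence at every vertex it meets, its two germs are distinct and non-adjacent in the link.

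By Gromov's link condition it then suffices to show that every vertex link of $S(\mathcal{P})$ is a flag simplicial complex, which for a $2$-complex means: a simple graph with no triangle (equivalently, girth at least $4$). The observation that makes this manageable is that a cylinder with $[0,1]$ a single edge contributes no new vertices and, near each vertex $q$ of one of its boundary circles, merely attaches one ``corner'' per passage of $\rho_j$ (resp.\ $\gamma$) through $q$: a new valence-$2$ vertex of $\mathrm{lk}(q)$ joined to the two germs of that passage, with distinct corners meeting only at germ vertices. Because every $r_j$ is cyclically reduced, $\rho_j$ never backtracks, so no corner is degenerate; the same holds for $\gamma$ since it is a local geodesic.

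I would then run through the vertex types. For a vertex of $X_j$ not on $\gamma$ the link is unchanged and flag. For a vertex $p$ of $X_j$ on $\gamma$, each corner is attached to a pair of germs that are non-adjacent in $\mathrm{lk}_{X_j}(p)$ (local geodesic), so no triangle or bigon is created and any new circuit has length at least $4$. For a subdivision vertex in the interior of a petal of the rose the link is two germ-vertices together with corner-vertices, and for the central vertex $v_0$ it is the $2n$ germs $a_i^{\pm}$ together with one corner-vertex for each cyclic transition of each $r_j$; in both cases the resulting graph is \emph{bipartite} (germs versus corner-vertices), hence has no triangle, and it has no double edge precisely because cyclic reducedness forbids a transition $a\,a^{-1}$ and hence any degenerate corner. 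Finally, with $[0,1]$ a single edge there are no interior cylinder vertices left to check. Thus all links are flag, Gromov's condition holds, and $S(\mathcal{P})$ is a finite non-positively curved square complex.

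The step I expect to be the real obstacle is not any individual computation but the bookkeeping at $v_0$: the relator loops $\rho_j$ pass through this single vertex many times, so a careless cell structure could easily produce a link with a triangle (violating the angle condition) or a double edge. The plan is organised precisely around the two facts that avoid this --- bipartiteness of the link at $v_0$ (and at the petal-interior vertices), which kills triangles, and cyclic reducedness of the relators, which kills degenerate corners and hence double edges --- together with the local-geodesic hypothesis on $\gamma$, which plays the analogous role at the vertices of $X_j$ lying on $\gamma$.
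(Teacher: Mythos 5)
Your proof is correct, and your subdivision is exactly the one the paper uses (scale $R(\underline{a})$ by the length of $\gamma$ and the $j$-th copy of $X$ by the word-length of $r_j$, so that both attaching maps of the $j$-th cylinder become combinatorial and length-preserving). Where you diverge is in how non-positive curvature of the glued complex is established: the paper dispenses with this in one line by observing that $\rho_j$ (cyclically reduced, hence locally injective in a graph) and $\gamma$ (a local geodesic) are local isometries $\mathbb{S}^1\to R(\underline{a})$ and $\mathbb{S}^1\to X_j$, and then citing the ``gluing with a tube'' proposition \cite[Proposition II.11.6]{bridson_metric_1999}. You instead verify Gromov's link condition directly at every vertex, which amounts to reproving that gluing proposition in this combinatorial special case. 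Your bookkeeping is sound: the bipartiteness of the links at the rose vertices kills triangles there, cyclic reducedness of the $r_j$ and the local-geodesic hypothesis on $\gamma$ rule out degenerate corners (hence loops and double edges), and the fact that a local geodesic meets each link in two points at distance $\geq\pi$ (i.e.\ non-adjacent germs) prevents new short circuits at vertices of $X_j$ on $\gamma$. What the paper's route buys is brevity and robustness (the metric gluing theorem works for arbitrary non-positively curved pieces, not just square complexes); what yours buys is self-containedness and an explicit combinatorial certificate for the link condition, in line with the remark following Proposition \ref{lem: P squared} that membership in this class of complexes is recursively checkable.
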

\begin{proof}
Let $k$ be the length of $\gamma$. We scale $R(a_1,\dots,a_n)$ by a factor of $k$ and subdivide each edge into $k$ pieces of length $1$. For $j=1,\dots,m$ we take a copy of $X$ scaled by a factor of the word-length of $r_j$, subdivided in the natural way so that it is a (unit) square complex. The attaching maps in the definition of  $S(\mathcal{P})$ are then length-preserving, so if the connecting cylinders are subdivided into squares in the obvious manner, $S(\mathcal{P})$ becomes a non-positively curved square complex \cite[Proposition II.11.6]{bridson_metric_1999}.
\end{proof}

Together, these lemmas establish the following proposition, which reduces Theorem \ref{t:covers} to
Theorem \ref{t:main}.

\begin{proposition}\label{lem: P squared}
There is an algorithm that takes as input a finite group presentation $\mathcal{P}$ for a group $G$ and outputs a compact, non-positively curved square complex $S(\mathcal{P})$ with fundamental group $G_S$ such that
\[
\widehat{G}_S\cong\widehat{G}~.
\]
\end{proposition}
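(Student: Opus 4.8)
The plan is to assemble Proposition \ref{lem: P squared} directly from the two lemmas just established, together with the algorithmic remark following the definition of $S(\mathcal{P})$.

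First I would fix, once and for all, a single compact non-positively curved square complex $X$ whose fundamental group $S=\pi_1X$ is infinite but satisfies $\widehat{S}\cong 1$ --- for instance one of the simple groups of Burger--Mozes \cite{burger_finitely_1997} or Wise's complete square complex \cite{wise_complete_2007}, each of which is given by explicit finite combinatorial data --- together with a base vertex and an edge-loop $\gamma\colon\mathbb{S}^1\to X^{(1)}$ that is a local geodesic in $X$. None of these choices depends on the input presentation $\mathcal{P}$; they are part of the specification of the algorithm, not of its input. With $X$ and $\gamma$ fixed, the construction of $S(\mathcal{P})$ from $\mathcal{P}$ described in the Definition is purely combinatorial: one reads off the cyclically reduced relator loops $\rho_j$ from $\mathcal{P}$, glues on $m$ copies of $X$ via the connecting cylinders along $\rho_j$ and $\gamma$, and records the resulting cell structure. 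Hence $\mathcal{P}\mapsto S(\mathcal{P})$ is effective, as noted in Remark~(1) after the Definition.

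Next I would invoke Lemma \ref{lem: NPC square complex}: rescaling $R(\underline{a})$ by the length $k$ of $\gamma$, rescaling the $j$-th copy of $X$ by the word-length of $r_j$, and subdividing the connecting cylinders into unit squares endows $S(\mathcal{P})$ with the structure of a finite, non-positively curved square complex; since this rescaling and subdivision are themselves effective, the algorithm can output the square complex explicitly. Finally I would apply Lemma \ref{lem: Profinite isomorphism}, which asserts that the map $\widehat{\rho}_*\colon\widehat{G}_S\to\widehat{G}$ induced by the collapse $\rho\colon S(\mathcal{P})\to K(\mathcal{P})$ is an isomorphism. Because $G_S=\pi_1S(\mathcal{P})$ by definition, this is precisely the statement $\widehat{G}_S\cong\widehat{G}$, and the proof is complete.

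There is no genuine obstacle here: the proposition is essentially a packaging statement, with all the real content already contained in Lemmas \ref{lem: Profinite isomorphism} and \ref{lem: NPC square complex}. The one point worth stating carefully is the meaning of ``algorithm'': one should record that the fixed complex $X$ and loop $\gamma$ are concrete finite objects --- which they are, since the Burger--Mozes and Wise examples are described by explicit finite square complexes --- so that $\mathcal{P}\mapsto S(\mathcal{P})$ is genuinely computable rather than merely constructible in principle.
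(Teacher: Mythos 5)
Your proposal is correct and matches the paper exactly: the proposition is stated there with no separate proof beyond the sentence ``Together, these lemmas establish the following proposition,'' i.e.\ it is the combination of Lemma \ref{lem: Profinite isomorphism}, Lemma \ref{lem: NPC square complex}, and the algorithmic nature of $\mathcal{P}\mapsto S(\mathcal{P})$ noted in the remarks. Your additional care about fixing the concrete complex $X$ and loop $\gamma$ as part of the algorithm's specification is exactly the right reading of Remark (1).
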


\begin{remark} A simple combinatorial check will determine if a finite square complex satisfies the link condition, i.e.~supports a metric of non-positive curvature. Thus, the class of such 2-complexes is recursive.
\end{remark}

\subsection{Largeness}

A group is called  \emph{large} (or \emph{as large as a free group}, in the original terminology of Pride \cite{pride_concept_1980}), if it has a subgroup of finite index
that maps surjectively to a non-abelian free group.  Largeness is related to the existence of finite quotients
by the following elementary observation.

\begin{lemma}\label{lem: Large triple}
A group $G$ has a non-trivial finite quotient
if and only if  $G*G*G$ is large. 
\end{lemma}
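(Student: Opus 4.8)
The plan is to prove the two implications of Lemma~\ref{lem: Large triple} separately, the reverse direction being immediate and the forward direction being the only one requiring any work.

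\medskip

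First, suppose $G*G*G$ is large. Then it surjects onto a non-abelian free group (via a finite-index subgroup), and in particular $G*G*G$ is infinite, hence $G$ is nontrivial. Since a free product of $k\ge 2$ nontrivial groups has $\mathbb{Z}/2*\mathbb{Z}/2$ or $\mathbb{Z}$ as a quotient when the factors are... actually the cleanest route is: if $G=1$ then $G*G*G=1$ is not large, contradiction; so $G\ne 1$, and then $G$ has a nontrivial finite quotient $Q$ (if $G$ were itself finite and nontrivial, take $Q=G$; if $G$ is infinite this is exactly what we must produce, so this naive split does not work). Let me instead do the forward direction honestly.

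\medskip

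For the forward direction, suppose $G$ has a nontrivial finite quotient $q:G\to Q$ with $|Q|\ge 2$. Applying $q$ to each of the three free factors gives an epimorphism $\Phi:G*G*G\to Q*Q*Q$. It therefore suffices to show $Q*Q*Q$ is large whenever $Q$ is a nontrivial finite group, since largeness passes to groups that surject onto it. Now $Q*Q*Q$ is a finitely generated virtually free group: it acts on its Bass--Serre tree with finite vertex stabilizers, so it has a free subgroup $F$ of finite index, and I claim $F$ can be taken non-abelian. Indeed $Q*Q*Q$ is not virtually cyclic --- it contains $Q*Q$, which contains a non-abelian free group as soon as $|Q|\ge 2$ (for $|Q|=2$, $\mathbb{Z}/2*\mathbb{Z}/2$ is infinite dihedral, so $\mathbb{Z}/2*\mathbb{Z}/2*\mathbb{Z}/2$ contains $\mathbb{Z}/2*\mathbb{Z}/2$ of infinite index and a standard ping-pong argument gives a rank-$2$ free subgroup). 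So $Q*Q*Q$ has a finite-index subgroup $F$ that is free of rank $\ge 2$, hence maps onto a non-abelian free group, hence is large; and largeness is inherited by $Q*Q*Q$ via the finite-index containment. Pulling back along $\Phi$, the preimage $\Phi^{-1}(F)$ has finite index in $G*G*G$ and surjects onto the non-abelian free group $F$ (or a free quotient thereof), so $G*G*G$ is large. The cleanest packaging of the ping-pong step is to observe that a finitely generated virtually free group is large if and only if it is not virtually cyclic, and $Q*Q*Q$ is visibly not virtually cyclic when $Q\ne 1$.

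\medskip

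The main (and only mild) obstacle is the edge case $|Q|=2$: one must check that $(\mathbb{Z}/2)*(\mathbb{Z}/2)*(\mathbb{Z}/2)$ really is large, i.e.\ contains a non-abelian free subgroup of finite index, which follows from the fact that it is a finitely generated virtually free group that is not virtually cyclic (its abelianization is $(\mathbb{Z}/2)^3$ and it splits nontrivially over finite groups, ruling out virtual cyclicity). Everything else is formal: largeness is monotone under passing to finite-index subgroups, under passing to groups mapping onto a large group is false in general but \emph{is} true in the needed direction here because $\Phi$ is surjective and we pull back a finite-index large-witnessing subgroup, and the reverse implication of the lemma is trivial since a large group is infinite.
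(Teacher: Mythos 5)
Your forward direction is correct and close in spirit to the paper's: both reduce to showing that $Q*Q*Q$ is large for $Q$ a non-trivial finite group and then pull this back along the surjection $G*G*G\to Q*Q*Q$. The paper does this step more economically by exhibiting an explicit finite-index non-abelian free subgroup, namely the kernel of a homomorphism $Q*Q*Q\to Q$ restricting to an isomorphism on each free factor (this kernel acts freely on the Bass--Serre tree, so it is free, and its rank is $2|Q|-2\ge 2$ by an Euler characteristic count), whereas you invoke the more general fact that a finitely generated virtually free group which is not virtually cyclic is large; that is true, but your justification of it is the least precise part of your argument, and the explicit kernel makes the $|Q|=2$ ``edge case'' you worry about evaporate.

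The genuine gap is in the reverse direction, which you never actually prove. You begin it, correctly observe that ``$G*G*G$ is infinite, hence $G\ne 1$'' does not yield a non-trivial \emph{finite} quotient of $G$ when $G$ is infinite, abandon the attempt --- and then in your closing paragraph declare the implication ``trivial since a large group is infinite.'' That is exactly the non-argument you had already identified as insufficient: there exist infinite groups with no non-trivial finite quotients whatsoever (infinite simple groups, e.g.\ the Burger--Mozes groups cited in this paper), so the infiniteness of $G*G*G$ tells you nothing about finite quotients of $G$. The correct argument is the contrapositive, as in the paper: if every homomorphism from $G$ to a finite group is trivial, then every homomorphism from $G*G*G$ to a finite group is trivial (it is trivial on each free factor, and the factors generate), so $G*G*G$ has no proper subgroup of finite index; but a large group always has proper finite-index subgroups, since a non-abelian free group does and one can pull such a subgroup back through the finite-index witness of largeness. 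Hence $G*G*G$ is not large. Without some such argument the ``only if'' half of the lemma is unproved.
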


\begin{proof} If $G$ maps onto a non-trivial finite group $Q$, then $G*G*G$ maps onto $Q\ast Q\ast Q$.
The kernel of any homomorphism $Q\ast Q\ast Q\to Q$ that restricts to an isomorphism on each of the
free factors is non-abelian and free of finite index, and a subgroup of finite index in $G*G*G$ maps onto it.  Conversely, if $G$ can only map trivially to a finite group, then the same is true of $G\ast G\ast G$; so it is not large.
\end{proof}

 Combining Lemma \ref{lem: Large triple} with Theorem \ref{t:covers}, we see that largeness is undecidable, even among the fundamental groups of non-positively curved square complexes.

\begin{corollary}\label{cor: NPC largeness}
There is a recursive sequence of finite, non-positively curved square complexes $X_n$ such that:
\begin{enumerate}
\item for each $n\in\N$, $X_n$ has a proper connected finite-sheeted covering space if and only if $\pi_1X_n$ is large; 
\item the set of natural numbers
\[
\{n\in\N\mid \pi_1X_n\mathrm{~is~large}\}
\]
is recursively enumerable but not recursive.
\end{enumerate}
In particular, there is no algorithm to determine whether or not the fundamental group of a finite, non-positively curved square complex is large.
\end{corollary}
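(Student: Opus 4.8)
The plan is to combine Theorem~\ref{t:main}, the tripling trick of Lemma~\ref{lem: Large triple}, and the square-complex construction of Proposition~\ref{lem: P squared}.

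First I would fix once and for all a compact non-positively curved square complex $X$ with $\pi_1X$ infinite and $\widehat{\pi_1X}\cong 1$, together with a local-geodesic edge-loop $\gamma$ in $X^{(1)}$, as in the material preceding Lemma~\ref{lem: P squared}. Theorem~\ref{t:main} supplies a recursive sequence of finite presentations $G_n=\langle A_n\mid R_n\rangle$ for which $\{n\in\N\mid \widehat{G}_n\neq 1\}$ is recursively enumerable but not recursive. From $G_n$ I would write down the obvious finite presentation $\mathcal{P}_n$ of the free product $H_n:=G_n*G_n*G_n$ (a disjoint union of three copies of the generators and relators), and set $X_n:=S(\mathcal{P}_n)$ and $\Gamma_n:=\pi_1X_n=(H_n)_S$. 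By Lemma~\ref{lem: NPC square complex} each $X_n$ is a finite non-positively curved square complex, every step of the construction is algorithmic in $n$, so $(X_n)_{n\in\N}$ is a recursive sequence, and Proposition~\ref{lem: P squared} gives $\widehat{\Gamma}_n\cong\widehat{H}_n$.

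The heart of the argument is then the chain of equivalences
\[
\Gamma_n\text{ is large}\iff \widehat{G}_n\neq 1\iff X_n\text{ has a proper connected finite-sheeted covering.}
\]
For the right-hand equivalence: $\widehat{\Gamma}_n\cong\widehat{H}_n$, and $\widehat{H}_n\cong 1$ if and only if $\widehat{G}_n\cong 1$ (a non-trivial finite quotient of $H_n$ must restrict non-trivially to one of the three free factors, since they generate; the converse is clear), while $\widehat{\Gamma}_n\neq 1$ says exactly that $\Gamma_n$ has a proper finite-index subgroup, equivalently that $X_n$ has a proper connected finite-sheeted cover. For the left-hand equivalence I would argue both directions separately. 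If $\widehat{G}_n\neq 1$, then $H_n$ is large by Lemma~\ref{lem: Large triple}; the map $\rho_*:\Gamma_n\to H_n$ from the remark following the definition of $S(\mathcal{P})$ is surjective, and largeness is inherited by any group mapping onto a large group (pull back along $\rho_*$ a finite-index subgroup of $H_n$ that surjects onto a non-abelian free group), so $\Gamma_n$ is large. Conversely, if $\widehat{G}_n\cong 1$ then $\widehat{\Gamma}_n\cong 1$, so $\Gamma_n$ has no non-trivial finite quotient; but any large group is infinite and contains a proper finite-index normal subgroup — the normal core of a finite-index subgroup surjecting onto a non-abelian free group — hence has a non-trivial finite quotient, so $\Gamma_n$ is not large.

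With these equivalences in hand the corollary follows: $\{n\mid \pi_1X_n\text{ is large}\}=\{n\mid \widehat{G}_n\neq 1\}$ is recursively enumerable but not recursive, and running any hypothetical largeness-deciding algorithm on the recursive sequence $(X_n)$ would decide a non-recursive set. I do not expect a genuine obstacle here; the only point needing care is the bookkeeping in the two directions of the left-hand equivalence — one must invoke surjectivity of $\rho_*$ itself (not merely that $\widehat{\rho}_*$ is an isomorphism) for the ``large'' direction, and the elementary fact that largeness forces a non-trivial finite quotient for the ``not large'' direction.
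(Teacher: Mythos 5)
Your proposal is correct and follows essentially the same route as the paper: the paper obtains the corollary by applying the $S(\mathcal{P})$ construction of Proposition \ref{lem: P squared} to the tripled groups $G_n*G_n*G_n$ and invoking Lemma \ref{lem: Large triple}, which is exactly what you do, with the bookkeeping (surjectivity of $\rho_*$ for one direction, the fact that large groups have non-trivial finite quotients for the other) spelled out. The only microscopic quibble is that the normal core of the finite-index subgroup surjecting onto $F_2$ need not be proper if that subgroup is the whole group, but in that degenerate case the group visibly surjects onto $\Z/2$, so the conclusion stands.
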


\subsection{Biautomatic groups}

Fundamental groups of compact, non-positively curved square complexes are biautomatic \cite{gersten_small_1990} (see also \cite{niblo_geometry_1998}). There is an algorithm to determine if a biautomatic group is trivial, but Theorem \ref{t:covers} tells us that there is no algorithm to determine if it is profinitely trivial.

\begin{corollary}
There is no algorithm that, given a biautomatic group $G$, can determine whether or not $G$ has a proper subgroup of finite index. Nor is there an algorithm that can determine whether or not $G$ is large.
\end{corollary}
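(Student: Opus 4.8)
The plan is to observe that this corollary is, in essence, a repackaging of Theorem \ref{t:covers} (and Corollary \ref{cor: NPC largeness}) together with the classical fact that the fundamental group of a compact, non-positively curved square complex is biautomatic \cite{gersten_small_1990,niblo_geometry_1998}. Recall that, combining Proposition \ref{lem: P squared} with Theorem \ref{t:main}, one obtains a recursive sequence of compact, non-positively curved square complexes $X_n$ for which the set $\{n\in\N\mid X_n\text{ has a proper connected finite-sheeted covering}\}$ is recursively enumerable but not recursive; equivalently, writing $G_n=\pi_1X_n$, the set $\{n\mid G_n\text{ has a proper subgroup of finite index}\}$ is r.e.\ but not recursive.

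First I would make precise the sense in which the $G_n$ are legitimate inputs to a putative decision algorithm for biautomatic groups. From the finite square complex $X_n$ one reads off, algorithmically, a finite presentation $\langle A_n\mid R_n\rangle$ of $G_n$ — for instance via the edge-path group relative to a spanning tree, or van Kampen's theorem applied to the cell structure. By the Gersten--Short theorem each $G_n$ is biautomatic, and the cited work in fact produces a biautomatic structure explicitly from the geodesic combinatorics of $X_n$, so one may equally take the input to be a presentation together with an automatic structure. In either reading, $n\mapsto G_n$ is a recursive sequence of biautomatic groups.

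Then I would argue by contradiction. An algorithm that, given a biautomatic group, decided whether it possesses a proper subgroup of finite index could be run on the sequence $G_n$, and would thereby decide a non-recursive set, which is absurd. This proves the first assertion. For the second, I would instead invoke Corollary \ref{cor: NPC largeness}: there the square complexes are built (via Lemma \ref{lem: Large triple}, applying Proposition \ref{lem: P squared} to presentations of triple free products) so that $\pi_1X_n$ is large precisely when $X_n$ has a proper connected finite-sheeted covering, and the set of such $n$ is again r.e.\ but not recursive. Since these $\pi_1X_n$ are likewise biautomatic, the same contradiction rules out an algorithm deciding largeness among biautomatic groups. (This should be contrasted with the fact, noted above, that triviality of a biautomatic group \emph{is} decidable.)

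I do not expect a genuine obstacle here: all the mathematical content already resides in Theorem \ref{t:covers}, Corollary \ref{cor: NPC largeness} and the Gersten--Short theorem. The only point demanding a little care is bookkeeping — fixing the convention for what an algorithm ``given a biautomatic group'' receives as input, and confirming that the passage from the recursive family of square complexes to a recursive family of (presentations of) biautomatic groups is effective — both of which are entirely routine.
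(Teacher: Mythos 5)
Your argument is exactly the paper's: the corollary is deduced from Theorem \ref{t:covers} and Corollary \ref{cor: NPC largeness} together with the Gersten--Short theorem that fundamental groups of compact non-positively curved square complexes are biautomatic. The extra care you take over what counts as "input" is fine but not needed beyond what the paper already does.
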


\section{Profinite Rank}
\label{s:profinite}

By definition, the {\em profinite rank} of a group $G$, denoted by $\hat{d}(G)$, is the minimum number of elements needed to generate $\wh{G}$ as a topological group.

\subsection{A profinite Grushko lemma}
We want to show that there is no algorithm that can determine the profinite rank of a hyperbolic group. For this we shall use the following analogue of Grushko's theorem; we make no claim that the constant $\frac{59}{60}$ is sharp.

\begin{lemma}\label{lem: Profinite Grushko} 
Let $G$ be a group with $\widehat{G}\ncong 1$. Then $\hat d (\bigast_{i=1}^n G) \ge \frac{59}{60}n$.
\end{lemma}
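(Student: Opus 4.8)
The plan is to bound below the profinite rank of a free product $\bigast_{i=1}^n G$ in terms of the number of free factors, using the fact that each factor contributes a non-trivial finite quotient. Since $\widehat{G}\ncong 1$, fix a non-trivial finite quotient $q:G\twoheadrightarrow Q$; replacing $Q$ by a quotient, we may assume $Q$ is a finite simple group (or at least a finite group of some fixed small order). The free product $\Pi_n:=\bigast_{i=1}^n G$ then maps onto $\bigast_{i=1}^n Q$, and hence $\widehat{\Pi}_n$ maps onto the profinite completion of $\bigast_{i=1}^n Q$; so it suffices to bound $\hat d$ of a free product of $n$ copies of a fixed non-trivial finite group $Q$. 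First I would reduce to the case that $Q=\mathbb{Z}/p$ for a prime $p\le 5$ (or $Q$ is a fixed finite simple group whose order divides $60$ in the relevant sense), which is where the explicit constant $\tfrac{59}{60}$ comes from: among all non-trivial finite groups, the generation-rate data for wreath-type or diagonal quotients is optimised (from our perspective, worst-case) by small groups, and $60=|A_5|$ is the relevant threshold.

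The key estimate is the following: if $\Delta$ is a finite group that is generated by $d$ elements, then the number of ways a surjection $\bigast_{i=1}^n Q\twoheadrightarrow \Delta^{(r)}$ onto a power of a finite simple-ish group can exist forces $n$ to be controlled by $d$. Concretely, I would use the standard Nielsen–Schreier / Kurosh-style counting: a surjection from $\bigast_1^n Q$ onto a finite group $\Delta$ is the same as an $n$-tuple of homomorphisms $Q\to\Delta$ whose images generate $\Delta$; and the Frattini-type argument says that if $\widehat{\Pi}_n$ is topologically $d$-generated then every finite quotient of $\Pi_n$ is $d$-generated. So I would take $\Delta$ to be the largest-rank finite quotient of $\bigast_1^n Q$ we can cheaply exhibit, namely a quotient of the form $Q^{n-1}$ (or $Q^{(n-1)}$ as an $F_2$-style diagonal), and then invoke a lower bound for the rank of $Q^k$: for $Q$ abelian of prime order $p$, $d(Q^k)=k$, while for $Q$ non-abelian simple the classical result of Wiegold/Hall gives $d(Q^k)\ge \log_{|Q|^{?}} k$ only logarithmic growth — which is why abelian $Q$ is the right choice and why we must chase the worst prime. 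Taking $Q=\mathbb{Z}/p$ with $p$ the relevant small prime dividing $|Q_0|$ for our original $Q_0$, we get that $\Pi_n$ surjects $(\mathbb{Z}/p)^{n-1}$ (kill the ``diagonal''), hence $\hat d(\Pi_n)\ge n-1$, and then a small additional argument sharpening $n-1$ to $\tfrac{59}{60}n$ by being slightly more careful when $n$ is small relative to the constant.

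The main obstacle, and the step I'd spend the most care on, is extracting the explicit constant $\tfrac{59}{60}$ rather than the clean but weaker $n-1$ or $\tfrac{n}{2}$: this requires knowing, for every non-trivial finite group $Q$, a lower bound of the shape $\hat d(\bigast_1^n Q)\ge cn$ with $c\ge\tfrac{59}{60}$, uniformly in $Q$. The worry is groups $Q$ that are ``efficiently generated in powers'' — but the abelianisation argument saves us: $\bigast_1^n Q$ surjects $(Q^{\mathrm{ab}})^{n}$ modulo a diagonal-type relation, and for $Q^{\mathrm{ab}}$ a non-trivial finite abelian group of exponent $e$ one has $d((\mathbb{Z}/p)^{n})= n$ for any prime $p\mid e$; the loss of a bounded number of generators to ``diagonal'' quotients, together with the observation that one only loses a genuine fraction when $Q^{\mathrm{ab}}=1$ (i.e.\ $Q$ perfect), is what must be handled, and perfect groups are handled by passing to a non-abelian finite simple quotient, the smallest of which is $A_5$ of order $60$ — giving the constant. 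I would state the bound for free products of finite groups as a separate sub-lemma, prove it by the abelianisation-plus-diagonal count, and then deduce Lemma~\ref{lem: Profinite Grushko} in one line from the surjection $\widehat{\bigast_1^n G}\twoheadrightarrow\widehat{\bigast_1^n Q}$.
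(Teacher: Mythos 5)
Your overall architecture matches the paper's: fix a non-trivial finite quotient, split into the case where $G$ has a cyclic quotient $\Z/p$ (where $\bigast_1^n G$ surjects $(\Z/p)^n$ --- note you do not even need to kill a diagonal; the abelianisation of the free product is the full direct sum, so you get rank $n$, not $n-1$) and the case where every finite quotient is perfect, with the constant $\tfrac{59}{60}$ coming from $|A_5|=60$. But there is a genuine gap in the perfect case, and it is exactly the step you flag as worrying without resolving. You correctly observe that for a non-abelian finite simple group $S$ the direct-power quotients are useless, since $d(S^k)$ grows only logarithmically in $k$; but you then conclude the perfect case with ``perfect groups are handled by passing to a non-abelian finite simple quotient, the smallest of which is $A_5$'' --- and no argument is given for why a free product of $n$ copies of a perfect group needs roughly $\tfrac{|S|-1}{|S|}n$ topological generators. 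The quotient $\bigast_1^n S$ itself is $2$-generated as an abstract group when $S$ is simple (indeed $S\ast S$ is $2$-generated), so no amount of staring at quotients of the form $S^k$ or at counting surjections will produce a linear lower bound.

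The missing idea is to exploit the \emph{kernel} of a retraction $\pi:\bigast_1^n S\to S$: this kernel is free of rank $r=(n-1)(|S|-1)$ (by Euler characteristic), so its preimage $K_n$ in $L_n=\bigast_1^n G$ surjects a free group of rank $r$. One then maps $K_n$ onto $(\Z/2)^r$ and \emph{induces} this to a homomorphism $L_n\to(\Z/2)^r\wr S$ whose restriction to $K_n$ still has elementary abelian image of rank at least $r$. Now the Nielsen--Schreier formula is applied \emph{in reverse}: if the image of $L_n$ has rank $\delta$, its index-$\le|S|$ subgroup $\Phi(K_n)$ has rank at most $|S|(\delta-1)+1$, whence $(n-1)(|S|-1)\le|S|(\delta-1)+1$ and $\delta\ge\tfrac{|S|-1}{|S|}n\ge\tfrac{59}{60}n$. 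Without this wreath-product induction step (or some substitute that converts the large free rank of a finite-index subgroup into a lower bound on the generation of the whole group), your proof does not close in the perfect case, which is the only case where the constant $\tfrac{59}{60}$ is actually needed.
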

\begin{proof}
If $G$ maps onto a finite cyclic group $\Z/p$, then $L_n:=\bigast_{i=1}^n G$ and   its profinite completion  map  onto $(\Z/p)^n$, and therefore
 require at least $n$ generators.

Suppose, then, that  $G$ maps onto a non-trivial finite perfect group $S$.  Let $Q_n:=\bigast_{i=1}^nS$ and let $\pi:Q_n\to S$ be a homomorphism that restricts to an isomorphism on each free factor.  The kernel $\ker\pi$ acts freely on the Bass--Serre tree for $Q_n$ (since all of the torsion of $Q_n$ is conjugate into one of the free factors) and hence $\ker\pi$ is a free group; its rank is $r:=(n-1)(|S|-1)$, as can be calculated using rational Euler characteristic.

Let $K_n<L_n$ be the inverse image of $\ker\pi$. Then $K_n$ is normal,   maps onto a free group of rank $r$, and $L_n/K_n\cong S$. We fix an epimorphism $K_n\to (\Z/2)^{r}=:A$ and induce this to a homomorphism $\Phi:L_n\to A\wr S$. The image of $K_n$ under this map lies in the base of the wreath product, where it projects onto each $A$ summand; thus it is an elementary 2-group of rank at least $r$.

By the Nielsen--Schreier formula, if $\Phi(L_n)$ has rank $\delta$ then $\Phi(K_n)$, which has index at most $|S|$, has rank at most $|S|(\delta -1) +1$. Thus  
\[
(n-1)(|S|-1) \le |S| (\delta -1) +1~,
\]
whence
\[
\hat d(L_n) \ge \delta \geq \left(\frac{|S|-1}{|S|}\right)n~.
\]
But $S$ is perfect and non-trivial, so $|S|\ge 60$.
\end{proof}

\subsection{Profinite rank of hyperbolic groups}

We shall appeal to the following version of the Rips construction.

\begin{theorem}\label{thm: Rips--Wise}
There is an algorithm that takes as input a finite presentation for a group $G$ and outputs a finite presentation for a residually finite, torsion-free, hyperbolic group $\Gamma$ such that there exists a short exact sequence
\[
1\to N\to \Gamma\to G\to 1
\]
where $N$ is a 2-generator group.
\end{theorem}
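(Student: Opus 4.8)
The plan is to realise $G$ as the quotient in a Rips-style short exact sequence and then to arrange, by a judicious choice of small-cancellation data, that the extension group is not merely hyperbolic and torsion-free but also residually finite; this last refinement is due to Wise. Fix a finite presentation $G=\langle a_1,\dots,a_n\mid r_1,\dots,r_m\rangle$ and introduce two new generators $x_1,x_2$. For each $i\in\{1,\dots,n\}$, each $j\in\{1,2\}$ and each sign $\epsilon\in\{\pm1\}$, choose a positive word $W_{i,j,\epsilon}$ in $x_1,x_2$, and for each $k\in\{1,\dots,m\}$ choose a positive word $W_k$ in $x_1,x_2$; let
\[
\Gamma=\bigl\langle\, a_1,\dots,a_n,x_1,x_2 \ \big|\ a_i^{\epsilon}x_ja_i^{-\epsilon}=W_{i,j,\epsilon}\ \ (\forall i,j,\epsilon),\ \ r_k=W_k\ \ (\forall k)\,\bigr\rangle.
\]
Set $N=\langle x_1,x_2\rangle\le\Gamma$. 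The conjugation relators show that conjugation by each $a_i$ and by each $a_i^{-1}$ carries $N$ into $N$, so $N\lhd\Gamma$; and since $N$ is the normal closure of $\{x_1,x_2\}$, the quotient $\Gamma/N$ is obtained by deleting $x_1,x_2$ and setting them equal to $1$ in the relators, which yields exactly $\langle a_1,\dots,a_n\mid r_1,\dots,r_m\rangle\cong G$. Thus, whatever the chosen positive words, we obtain a short exact sequence $1\to N\to\Gamma\to G\to1$ with $N$ a $2$-generator group, and the passage from $G$ to a presentation of $\Gamma$ of this shape is manifestly effective.

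It remains to choose the positive words so that $\Gamma$ is hyperbolic, torsion-free and residually finite. I would first take them long enough and sufficiently generic that the symmetrised set of relators satisfies the metric small-cancellation condition $C'(1/6)$ and contains no proper power; since both of these conditions are decidable, such a choice can be found by an effective search over candidate positive words of increasing length. A $C'(1/6)$ presentation defines a word-hyperbolic group, and a $C'(1/6)$ group none of whose relators is a proper power is torsion-free (via Greendlinger's lemma and the description of finite-order elements in small-cancellation groups). So with any such choice, $\Gamma$ is a torsion-free hyperbolic group sitting in the desired short exact sequence.

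The main obstacle is residual finiteness: $C'(1/6)$ groups need not be residually finite, so the classical Rips construction offers no control here, and one must invoke Wise's \emph{residually finite version of Rips's construction}. There, a yet more careful choice of the positive words makes the presentation complex of $\Gamma$ carry a compatible system of immersed codimension-one ``walls'', so that $\Gamma$ acts properly and cocompactly on a CAT(0) cube complex; combining Wise's cubulation of small-cancellation groups with the Haglund--Wise theory of special cube complexes (and the fact that a compact cubulated hyperbolic group is virtually special) yields that $\Gamma$ is residually finite, indeed linear. The extra constraints this imposes on the positive words are again satisfiable by an effective search, and they leave the subgroup $N=\langle x_1,x_2\rangle$ and the short exact sequence $1\to N\to\Gamma\to G\to1$ untouched. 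This produces the required residually finite, torsion-free, hyperbolic group $\Gamma$, depending algorithmically on the input presentation of $G$.
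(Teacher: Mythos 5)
Your proposal is correct and follows essentially the same route as the paper: the Rips construction with a $2$-generator kernel and $C'(1/6)$ relators, followed by Wise's cubulation of small-cancellation groups and the fact that compact cubulated hyperbolic groups are virtually special (Agol's theorem), hence residually finite. The only cosmetic difference is that you present the cubulation as requiring an extra careful choice of the positive words, whereas Wise's theorem applies to every finitely presented $C'(1/6)$ group, so the generic small-cancellation choice already suffices.
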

\begin{proof}
Rips showed how to construct such a short exact sequence with $\Gamma$ satisfying the $C'(1/6)$ small-cancellation condition \cite{rips_subgroups_1982}.  Wise proved that such groups are fundamental groups of compact, non-positively curved cube complexes \cite{wise_cubulating_2004}.  By Agol's theorem \cite{agol_virtual_2013}, it follows that $\Gamma$ is virtually special and, in particular, residually finite.
\end{proof}

We can now prove part (\ref{i: profinite rank}) of Theorem \ref{t:hyp}.  Note that the examples constructed are residually finite.

\begin{theorem}\label{thm: Hyperbolic profinite rank}
Fix any $d_0>2$. There is a recursive sequence of torsion-free, residually finite, hyperbolic groups $\Gamma_n$ with the property that:
\begin{enumerate}
\item for any $n\in\N$, $\hat{d}(\Gamma_n)<d_0 \Leftrightarrow \hat{d}({\Gamma}_n)=2$; and
\item the set of natural numbers
\[
\{n\in\N\mid \hat{d}(\Gamma_n)\geq d_0\}
\]
is recursively enumerable but not recursive.
\end{enumerate}
In particular, there is no algorithm that can decide whether or not the profinite completion of a torsion-free, residually finite, hyperbolic group can be generated (topologically) by a set of cardinality less than $d_0$.
\end{theorem}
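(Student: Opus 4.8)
The plan is to combine Theorem~\ref{t:main} with the profinite Grushko Lemma~\ref{lem: Profinite Grushko} and the Rips--Wise construction of Theorem~\ref{thm: Rips--Wise}. Fix an integer $k=k(d_0)$ with $\tfrac{59}{60}k\ge d_0$, fix once and for all an infinite finitely presented group $H$ with $\widehat{H}\cong 1$ (for instance one of the simple groups of \cite{burger_finitely_1997}), and let $(G_n)$ be the recursive sequence of finite presentations supplied by Theorem~\ref{t:main}, so that $\{n\mid\widehat{G}_n\ncong 1\}$ is recursively enumerable but not recursive. Set $L_n:=(\bigast_{i=1}^{k}G_n)\ast H$. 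This is again a recursive sequence of finite presentations; since a finite quotient of a free product is generated by the images of its free factors, $\widehat{L}_n\cong 1$ if and only if $\widehat{G}_n\cong 1$; and $L_n$ always contains the infinite non-cyclic group $H$, so $L_n$ is non-cyclic. Now apply Theorem~\ref{thm: Rips--Wise} to each $L_n$ to obtain, algorithmically, a short exact sequence $1\to N_n\to\Gamma_n\to L_n\to 1$ in which $\Gamma_n$ is torsion-free, residually finite and hyperbolic and $N_n$ is $2$-generated. Then $(\Gamma_n)$ is a recursive sequence of torsion-free, residually finite, hyperbolic groups, and each $\Gamma_n$ is non-cyclic because it surjects onto $L_n$.

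Next I would compute $\hat{d}(\Gamma_n)$ in the two cases. If $\widehat{G}_n\ncong 1$, then $L_n$ surjects onto $\bigast_{i=1}^{k}G_n$, so Lemma~\ref{lem: Profinite Grushko} gives $\hat{d}(L_n)\ge\hat{d}(\bigast_{i=1}^{k}G_n)\ge\tfrac{59}{60}k\ge d_0$; and since $\Gamma_n$ surjects onto $L_n$, the induced surjection $\widehat{\Gamma}_n\to\widehat{L}_n$ yields $\hat{d}(\Gamma_n)\ge\hat{d}(L_n)\ge d_0$. If instead $\widehat{G}_n\cong 1$, then $\widehat{L}_n\cong 1$, and applying profinite completion to the Rips sequence and using its right-exactness, $\widehat{N}_n\to\widehat{\Gamma}_n\to\widehat{L}_n=1$, we see that the closure of the image of $N_n$ is all of $\widehat{\Gamma}_n$; as $N_n$ is $2$-generated, this forces $\hat{d}(\Gamma_n)\le 2$. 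On the other hand $\Gamma_n$ is finitely generated, residually finite and non-cyclic, and such a group has profinite rank at least $2$ (a finitely generated residually finite group with procyclic completion is cyclic). Hence $\hat{d}(\Gamma_n)=2$ in this case.

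Putting the two cases together, $\{n\mid\hat{d}(\Gamma_n)\ge d_0\}=\{n\mid\widehat{G}_n\ncong 1\}$, which is recursively enumerable (enumerate the finite quotients of the $G_n$) but not recursive by Theorem~\ref{t:main}; moreover $\hat{d}(\Gamma_n)<d_0$ forces $\widehat{G}_n\cong 1$ and hence $\hat{d}(\Gamma_n)=2$, which gives property~(1) since $d_0>2$. The last sentence of the theorem then follows at once, and residual finiteness is built into Theorem~\ref{thm: Rips--Wise}. The only step that requires genuine care is the identification of $\widehat{\Gamma}_n$ in the case $\widehat{G}_n\cong 1$: one must invoke right-exactness of profinite completion for the Rips sequence together with the $2$-generation of its kernel. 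Everything else is bookkeeping on top of Theorems~\ref{t:main} and~\ref{thm: Rips--Wise} and Lemma~\ref{lem: Profinite Grushko}, so I expect that single point to be the crux.
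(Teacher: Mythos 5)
Your argument is correct and takes essentially the same route as the paper's proof: pass to a free product of roughly $\tfrac{60}{59}d_0$ copies of $G_n$ so that Lemma~\ref{lem: Profinite Grushko} forces $\hat d\ge d_0$ whenever $\widehat{G}_n\ncong 1$, then feed this into the Rips--Wise construction (Theorem~\ref{thm: Rips--Wise}) and use the $2$-generation of the kernel together with right-exactness of profinite completion to get $\hat d(\Gamma_n)=2$ in the trivial case. The only (harmless) difference is your extra free factor $H$ with $\widehat{H}\cong 1$ to guarantee non-cyclicity of $L_n$ and hence of $\Gamma_n$; the paper instead relies on the Rips output being non-elementary.
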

\begin{proof}
Let $G_n$ be a recursive sequence of finitely presented groups such that the set of natural numbers $\{n\in\N\mid \widehat{G}_n\ncong 1\}$ is recursively enumerable but not recursive.  Let $M\geq\frac{60}{59}d_0$ and, for each $n$, let $G'_n$ be a free product of $M$ copies of $G_n$. Then either $\widehat{G}'_n\cong 1$ or $\hat{d}(G'_n)\geq d_0$ by Lemma \ref{lem: Profinite Grushko}.

Apply Theorem \ref{thm: Rips--Wise} to obtain short exact sequences
\[
1\to N_n\to\Gamma_n\to G'_n\to 1
\]
with each $N_n$ a 2-generator group.  Note that since $\G_n$ is residually-finite but not cyclic, $\hat{d}(\Gamma_n)\geq 2$. 

If $\hat{d}(\Gamma_n)<d_0$ then $\hat{d}(G'_n)<d_0$, so $\widehat{G}'_n\cong 1$ and $\widehat{N}_n$ surjects $\widehat{\Gamma}_n$, whence $\hat{d}(\Gamma_n)=2$.  This proves (1).  Item (2) follows, because $\hat{d}(\Gamma_n)\geq d_0$ if and only if $\widehat{G}_n\ncong 1$.
\end{proof}

\section{Undecidable properties of hyperbolic groups}\label{s:hyp}

In this section we prove the remaining parts of Theorem \ref{t:hyp}. We also prove that either every hyperbolic group is residually finite, or else there is no algorithm to decide which hyperbolic groups have a finite quotient. All of these things will be proved by combining our previous results with the following refinement of the Rips construction \cite{rips_subgroups_1982}, which is due to  Belagradek and Osin \cite{belegradek_rips_2008}. 

\begin{theorem}[Belegradek--Osin, \cite{belegradek_rips_2008}]\label{thm: Belegradek--Osin}
There is an algorithm that takes as input a finite presentation for a non-elementary hyperbolic group $H$ and a finite presentation for a group $G$ and outputs a presentation for a hyperbolic group $\Gamma$ that fits into a short exact sequence
\[
1\to N\to \Gamma\to G\to 1 
\]
such that $N$ is isomorphic to a quotient group of $H$.  Furthermore, if $H$ and $G$ are torsion-free then $\Gamma$ can also be taken to be torsion-free.
\end{theorem}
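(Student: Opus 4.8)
The plan is to run the Rips construction \cite{rips_subgroups_1982} with the free group on the "auxiliary" generators replaced by the prescribed group $H$, and with ordinary $C'(1/6)$ small cancellation replaced by Ol'shanskii's small-cancellation theory over hyperbolic groups. Fix presentations $G=\langle a_1,\dots,a_n\mid r_1,\dots,r_m\rangle$ and a finite generating set $h_1,\dots,h_s$ of $H$, put $F=F(a_1,\dots,a_n)$, and set $P=F\ast H$. A free product of hyperbolic groups is hyperbolic, so $P$ is hyperbolic; it is non-elementary because $H$ is, and it is torsion-free whenever $H$ is. Choosing (by a mechanism specified below) elements $W_1,\dots,W_m$ and $\theta_{i,k},\theta'_{i,k}$ of $H$, define $\Gamma$ to be the quotient of $P$ by the normal closure of the finite set
\[
\mathcal{R}=\{\,r_jW_j^{-1}:1\le j\le m\,\}\cup\{\,a_i^{-1}h_ka_i\,\theta_{i,k}^{-1},\ a_ih_ka_i^{-1}\,(\theta'_{i,k})^{-1}:1\le i\le n,\ 1\le k\le s\,\}.
\]
Then $\Gamma$ is finitely presented, and the construction is algorithmic once the choice procedure is.

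For \emph{any} choices, the group-theoretic bookkeeping works out. Let $\bar H\le\Gamma$ be the image of $H$. The conjugation relators force $a_i^{\pm1}h_ka_i^{\mp1}\in\bar H$, so $\bar H\lhd\Gamma$. The assignment $a_i\mapsto a_i$, $h_k\mapsto 1$ defines a surjection $\Gamma\to G$ (every word of $\mathcal R$ dies, using $r_j=1$ in $G$), whose kernel contains $\bar H$; conversely $\Gamma/\bar H$ has presentation $\langle a_1,\dots,a_n\mid r_1,\dots,r_m\rangle=G$, so $\ker(\Gamma\to G)=\bar H$. This produces the required short exact sequence $1\to N\to\Gamma\to G\to1$ with $N=\bar H$ a quotient of $H$. (The relators $r_jW_j^{-1}$ are essential here: without them $\Gamma/\bar H$ would be free rather than $G$.)

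The content is to choose $W_j,\theta_{i,k},\theta'_{i,k}$ so that $\mathcal R$ is a small-cancellation set over the hyperbolic group $P$ in Ol'shanskii's sense: read as normal-form words in $P=F\ast H$, the elements of $\mathcal R$ should be long, not proper powers, and should have only short "pieces" (common quasigeodesic subwords of distinct relators and of their cyclic rotations) relative to the hyperbolic metric on $P$. Because $H$ is non-elementary hyperbolic, it contains arbitrarily long, pairwise-independent elements each satisfying a strong small-cancellation condition relative to $H$ (e.g.\ large powers of suitable loxodromic elements, or elements produced by Ol'shanskii's existence lemmas); taking the $W_j,\theta_{i,k},\theta'_{i,k}$ to be such elements of sufficiently large length, and exploiting that the $F$-syllables $a_i^{\pm1}$ appearing in the conjugation relators are uniformly short, one forces the small-cancellation condition. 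Ol'shanskii's theorem then gives that $\Gamma$, a quotient of the non-elementary hyperbolic group $P$ by such a relator set, is again hyperbolic; and when $P$ is torsion-free and the relators are not proper powers (which the genericity can guarantee), the same machinery, via the asphericity of small-cancellation presentations over torsion-free hyperbolic groups, shows $\Gamma$ is torsion-free. Since the required genericity is an effectively checkable property of a tuple of words in $H$ (the word problem in $H$ is solvable and the relevant piece-length inequalities are decidable), the tuple is found by a bounded search, so the whole construction is algorithmic.

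The main obstacle is precisely this small-cancellation analysis over $P$: one must control the interaction of the $F$-part and the $H$-part, since the conjugation relators all share the single generators $a_i^{\pm1}$ and one has to rule out these extending to long common subwords, while the relators $r_jW_j^{-1}$ have fixed, possibly mutually overlapping $F$-parts $r_j$ that must nonetheless contribute only short pieces once the generic $W_j$ are appended. This is exactly where non-elementarity of $H$ is used, and it is the technical heart of the Belegradek--Osin argument \cite{belegradek_rips_2008}. One further point worth flagging is that only "$N$ is isomorphic to a quotient of $H$" is claimed, not $N\cong H$: the small-cancellation relations typically impose genuine quotienting on $H$, and in general they must.
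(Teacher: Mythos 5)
Your proposal is essentially a sketch of the original Belegradek--Osin construction itself (a Rips-type quotient of $F\ast H$ handled by Ol'shanskii-style small cancellation over hyperbolic groups), and the group-theoretic bookkeeping in your second paragraph -- normality of $\bar H$, identification of $\Gamma/\bar H$ with $G$ -- is correct. But the paper takes a genuinely different, and much lighter, route to the algorithmic claim: it treats the existence of $\Gamma$ as a black box supplied by \cite{belegradek_rips_2008} and observes that, since the class of (torsion-free) hyperbolic groups is recursively enumerable \cite{papasoglu_algorithm_1996} and the remaining conditions to certify (a homomorphism $H\to\Gamma$ with normal image and quotient isomorphic to $G$) are all semi-decidable, a naive exhaustive search is guaranteed to terminate. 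The lesson is that when a cited theorem guarantees existence and the defining properties are recursively verifiable, you get effectiveness for free without making the construction itself effective.

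Taken as a self-contained argument, your version has a real gap exactly where you flag it: the verification that $\mathcal R$ satisfies a small-cancellation condition over $P=F\ast H$, and that Ol'shanskii's machinery then yields hyperbolicity and (in the torsion-free case) torsion-freeness of $\Gamma$, is the entire content of \cite{belegradek_rips_2008} and is not something one can dispatch with ``genericity'' -- the interaction between the fixed $F$-syllables $r_j$, $a_i^{\pm 1}$ and the chosen $H$-elements must be controlled quantitatively relative to a hyperbolicity constant for $P$. Your claim that the small-cancellation condition is ``effectively checkable'' also needs more care than you give it: one must first compute a hyperbolicity constant for $P$ and then decide piece-length inequalities for quasigeodesic representatives, which is doable but is itself a nontrivial algorithmic assertion. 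If you intend to cite \cite{belegradek_rips_2008} for the hard analytic step anyway, the cleaner move is the paper's: cite it for existence and recover the algorithm by enumeration.
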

\begin{proof}
The only point that is not addressed directly by Belegradek and Osin is the fact that the construction can be made algorithmic, but it is tacitly implied in Corollary 3.8 of \cite{belegradek_rips_2008}. Indeed, since the class of hyperbolic groups is recursively enumerable \cite{papasoglu_algorithm_1996}, a naive search will eventually find a hyperbolic group $\Gamma$ and a homomorphism $H\to\Gamma$ whose image is normal with quotient  isomorphic to $G$.

In the torsion-free case, one needs the well known fact that the class of torsion-free hyperbolic groups is also recursively enumerable (see, for instance, the proof of Theorem III.$\Gamma$.3.2 in \cite{bridson_metric_1999}).
\end{proof}

\subsection{Largeness and virtual first Betti number}

Parts (\ref{i: largeness}) and (\ref{i: vb}) of Theorem \ref{t:hyp} follow from the next theorem. 

\begin{theorem}\label{thm: Hyperbolic largeness and vb_1}
There is a recursive sequence of finite presentations for torsion-free, hyperbolic groups $\G_n$ such that:
\begin{enumerate}
\item for each $n\in\N$,
\[
vb_1(\G_n)>0 \Leftrightarrow vb_1(\G_n)=\infty \Leftrightarrow \G_n\mathrm{~is~large}~;
\]
and
\item the set of natural numbers
\[
\{n\in\N\mid \G_n\mathrm{~is~large}\}
\]
is recursively enumerable but not recursive.
\end{enumerate}
In particular, for any $1\leq d\leq\infty$, there is no algorithm that determines whether or not a given torsion-free hyperbolic group $\Gamma$ has $vb_1(\Gamma)\geq d$; likewise, there is no algorithm that determines whether or not a given torsion-free hyperbolic group is large.  
\end{theorem}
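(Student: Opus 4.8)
The plan is to deduce Theorem \ref{thm: Hyperbolic largeness and vb_1} by transporting the undecidability of the triviality problem (Theorem \ref{t:main}) through the Belegradek--Osin version of the Rips construction (Theorem \ref{thm: Belegradek--Osin}), using largeness as the transported invariant. First I would fix a recursive sequence of finite presentations $G_n$ such that $\{n\mid \widehat{G}_n\ncong 1\}$ is recursively enumerable but not recursive, which exists by Theorem \ref{t:main}; replacing $G_n$ by $G_n*G_n*G_n$ and invoking Lemma \ref{lem: Large triple}, I may assume instead that $\{n\mid G_n\text{ is large}\}$ is r.e.\ but not recursive, and moreover that $G_n$ is large precisely when $\widehat{G}_n\ncong 1$. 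Fix once and for all a non-elementary torsion-free hyperbolic group $H$ (say a surface group, or a suitable $C'(1/6)$ group), and apply the algorithm of Theorem \ref{thm: Belegradek--Osin} to the pair $(H,G_n)$ to produce a recursive sequence of torsion-free hyperbolic groups $\G_n$ sitting in short exact sequences
\[
1\to N_n\to \G_n\to G_n\to 1
\]
with each $N_n$ a quotient of $H$; in particular $N_n$ is finitely generated.

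The heart of the argument is the equivalence in item (1). The implication ``$\G_n$ large $\Rightarrow vb_1(\G_n)=\infty$'' is immediate, and ``$vb_1(\G_n)=\infty\Rightarrow vb_1(\G_n)>0$'' is trivial, so the real content is ``$vb_1(\G_n)>0 \Rightarrow \G_n$ large'' together with ``$G_n$ not large $\Rightarrow vb_1(\G_n)=0$'' (equivalently $\widehat{G}_n\cong 1 \Rightarrow vb_1(\G_n)=0$). For the latter: if $\widehat{G}_n\cong 1$ then every finite-index subgroup of $\G_n$ surjects $G_n$ with kernel contained in a conjugate of $N_n$, hence surjects a finite-index subgroup of $G_n$, which again has trivial profinite completion and in particular trivial abelianisation; so $H_1$ of the finite-index subgroup of $\G_n$ comes entirely from $N_n$, which is a fixed finitely generated group $H$ modulo something, forcing $b_1$ to be zero — here one uses that $N_n$ being a quotient of the perfect-or-not group $H$ can be arranged to have trivial (or at least finite) abelianisation by choosing $H$ perfect, e.g.\ a non-elementary hyperbolic group with $H_1(H;\Q)=0$, which exists. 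For the forward implication ``$vb_1>0\Rightarrow$ large'': pass to a finite-index subgroup $\Delta\le\G_n$ with $b_1(\Delta)>0$; its image in $G_n$ is a finite-index subgroup $G'$, and the induced sequence $1\to N'\to\Delta\to G'\to 1$ has $N'$ a finite-index subgroup of $N_n$, hence finitely generated; a positive first Betti number for $\Delta$ that does not die in $N'$ would exhibit $G'$, hence $G_n$, with a surjection to $\Z$ — but a group with $\widehat{\,\cdot\,}\cong 1$ has no such surjection, so in the case $\widehat{G}_n\cong 1$ this cannot happen and $\G_n$ is not even virtually-$\Z$-surjecting. Conversely when $\widehat{G}_n\ncong 1$, i.e.\ $G_n$ is large, largeness is inherited by $\G_n$: a finite-index subgroup $G'\le G_n$ surjecting a non-abelian free group $F$ pulls back to a finite-index subgroup of $\G_n$ surjecting $G'$ and hence $F$. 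This gives both directions and pins down item (1).

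Item (2) is then formal: $\G_n$ is large if and only if $G_n$ is large if and only if $\widehat{G}_n\ncong 1$, so $\{n\mid \G_n\text{ large}\}$ is recursively enumerable (search for a finite-index subgroup mapping onto $\Z*\Z$, equivalently a finite quotient of $G_n$, by Lemma \ref{lem: Large triple}) but not recursive, by our choice of the $G_n$. The concluding ``in particular'' statement about the non-existence of an algorithm detecting $vb_1(\G)\ge d$ for any fixed $1\le d\le\infty$, and about detecting largeness, follows because by item (1) every one of these properties holds for $\G_n$ exactly when $\G_n$ is large, and that set is not recursive.

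The step I expect to be the main obstacle is the implication $\widehat{G}_n\cong 1 \Rightarrow vb_1(\G_n)=0$, i.e.\ controlling the abelianisations of \emph{all} finite-index subgroups of $\G_n$ simultaneously. The subtlety is that a finite-index subgroup $\Delta\le\G_n$ need not project \emph{onto} $G_n$, only onto a finite-index subgroup $G'$, and one must ensure $b_1(G')=0$ (which follows from $\widehat{G'}\cong 1$, itself a consequence of $\widehat{G}_n\cong 1$) \emph{and} that the kernel $N'=\Delta\cap N_n$ contributes no rational homology to $\Delta$. The clean way to force the latter is to choose the seed hyperbolic group $H$ in the Rips construction to be \emph{perfect} — for instance a non-elementary hyperbolic group with $H_1(H;\Q)=0$, which is easy to produce by small-cancellation methods — so that every quotient $N_n$ of $H$, and hence every finite-index subgroup of $N_n$ after a further argument with the transfer map, has finite abelianisation; then the five-term exact sequence in homology for $1\to N'\to\Delta\to G'\to 1$ gives $b_1(\Delta)=b_1(G')=0$. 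Making this last homological bookkeeping watertight — in particular handling the finite-index subgroups of $N_n$ rather than $N_n$ itself — is the one place where care is genuinely required; everything else is a transport of undecidability along an algorithmic construction.
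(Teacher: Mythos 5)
Your overall architecture is the paper's: run the square-complex/largeness groups through the Belegradek--Osin version of the Rips construction and show that $vb_1(\G_n)>0$ forces largeness of $G_n$. But the step you yourself flag as the main obstacle is a genuine gap, and your proposed fix does not close it. You suggest taking the seed group $H$ to be \emph{perfect} (i.e.\ $H_1(H;\Q)=0$) so that the kernel $N_n$, being a quotient of $H$, has finite abelianisation, and then invoking ``a further argument with the transfer map'' to control the finite-index subgroups $N'=\Delta\cap N_n$. Perfection is not inherited by finite-index subgroups, and the transfer map runs in the wrong direction: for $N'$ of finite index in $N_n$ it shows that $H_1(N_n;\Q)$ injects into $H_1(N';\Q)$, not the reverse. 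A perfect non-elementary hyperbolic group can perfectly well have finite-index subgroups with enormous $b_1$ (indeed many are large), and then a quotient $N_n$ of $H$ can have a finite-index subgroup $N'$ with $b_1(N')>0$; in that case a surjection $\Delta\to\Z$ could be non-trivial on $N'$ and you cannot push it down to $G'$. What you actually need is a property of $H$ that is inherited by quotients \emph{and} by finite-index subgroups and forces finite abelianisation. The paper takes $H$ to be a torsion-free uniform lattice in $\mathrm{Sp}(n,1)$, which has Kazhdan's Property (T); since (T) passes to quotients and to finite-index subgroups, $N_n\cap K$ has finite abelianisation for every finite-index $K\le\G_n$, and the argument closes. (Equivalently, $vb_1(H)=0$ suffices, because every finite-index subgroup of a quotient of $H$ is a quotient of a finite-index subgroup of $H$; but mere perfection does not imply $vb_1=0$.) Your suggestion of a surface group for $H$ fails outright, since surface groups are large.

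Two smaller points. First, Theorem \ref{thm: Belegradek--Osin} only guarantees a torsion-free output when the input $G$ is torsion-free, so you must feed it the fundamental groups of the non-positively curved square complexes from Corollary \ref{cor: NPC largeness} (torsion-free because the complexes are aspherical), rather than an unexamined triple free product of the groups from Theorem \ref{t:main}. Second, once $H$ is chosen correctly your remaining implications are fine and match the paper: $vb_1(G_n)>0\Rightarrow G_n$ has a non-trivial finite quotient $\Rightarrow G_n$ is large $\Rightarrow \G_n$ is large $\Rightarrow vb_1(\G_n)=\infty$, and the undecidability statement is then formal.
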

\begin{proof} Let $G_n$ be the sequence of fundamental groups of the square complexes produced by Corollary \ref{cor: NPC largeness}; note that as the fundamental groups of aspherical spaces, the $G_n$ are torsion-free.  Let $N_n<\Gamma_n$ be the pair of groups obtained by applying the algorithm of Theorem \ref{thm: Belegradek--Osin} to $G_n$, with $H$ a fixed torsion-free, non-elementary hyperbolic group with Property (T); torsion-free uniform lattices in ${\rm{Sp}}(n,1)$ provide explicit examples.

We have the following chain of implications.
\[
vb_1(G_n)>0\Rightarrow\G_n\mathrm{~is~large}\Rightarrow vb_1(\G_n)=\infty\Rightarrow vb_1(\G_n)>0
\]
The first implication follows from part (1) of Corollary \ref{cor: NPC largeness}, and the other implications are trivial.

To prove (1) and (2), it therefore suffices to show that $vb_1(\Gamma_n)>0$ implies that $vb_1(G_n)>0$.  Suppose, therefore, that $K<\G_n$ is a subgroup of finite index that admits a surjection $f:K\to \Z$.  Property (T) is inherited by quotients and subgroups of finite index, so the abelianization of $N_n\cap K$ is finite. Therefore, $f(N_n\cap K)=1$ and so $K/(K\cap N_n)$ surjects $\Z$. But $K/(K\cap N_n)$ has finite index in $G_n$, so $vb_1(G_n)>0$ as required.
\end{proof}

\subsection{Linear representations}

In this section we make use of known examples of torsion-free, non-elementary hyperbolic groups that admit no infinite linear representation to establish parts (\ref{i: finite linear rep}) and (\ref{i: finite linear rep fixed k}) of Theorem \ref{t:hyp}.  As M. Kapovich showed in \cite[Theorem 8.1]{kapovich_representations_2005}, the existence of such examples can be proved using the work of Corlette \cite{corlette_archimedean_1992} and Gromov--Schoen \cite{gromov_harmonic_1992} on (archimedean and non-archimedean) super-rigidity for lattices in ${\rm{Sp}}(n,1)$. 

\begin{theorem}[\cite{kapovich_representations_2005}]\label{thm: Non-linear hyperbolic group}
There exists a torsion-free, non-elementary hyperbolic group $H$ with the property that, for any field $k$, every finite-dimensional representation of $G$ over $k$ has finite image.
\end{theorem}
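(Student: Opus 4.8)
The plan is to realise $H$ as an infinite quotient of a lattice in ${\rm{Sp}}(n,1)$, exploiting the extreme rigidity of such lattices. Fix $n\ge 2$ and let $\Gamma$ be a torsion-free uniform lattice in ${\rm{Sp}}(n,1)$; such lattices exist (by arithmetic constructions together with Selberg's lemma), and each is a non-elementary, torsion-free hyperbolic group, since it acts properly and cocompactly on quaternionic hyperbolic $n$-space, which carries a metric of pinched negative curvature. Moreover $\Gamma$ has Kazhdan's property (T), as ${\rm{Sp}}(n,1)$ does for $n\ge 2$. The rigidity input I want to extract is the following dichotomy: \emph{for every field $k$, every finite-dimensional linear representation $\rho:\Gamma\to \mathrm{GL}_d(k)$ either has finite image or is injective.}

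To establish this dichotomy I would argue as follows. Since $\Gamma$ is finitely generated we may assume $k$ is finitely generated over its prime field. If $\rho(\Gamma)$ is infinite then, by a standard property of finitely generated linear groups, $\rho(\Gamma)$ is unbounded in $\mathrm{GL}_d(K)$ for some local field $K$ into which $k$ embeds, archimedean or not. Passing to the Zariski closure of $\rho(\Gamma)$ over $K$ and using property (T) to kill the contributions of the unipotent radical (via the vanishing of the relevant $H^1$) and of the central torus (via the finiteness of $\Gamma^{\mathrm{ab}}$), we reduce, after replacing $\Gamma$ by a finite-index subgroup --- still an ${\rm{Sp}}(n,1)$-lattice --- to a Zariski-dense homomorphism into the $K$-points of a semisimple group with unbounded image. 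Superrigidity for ${\rm{Sp}}(n,1)$-lattices --- in the archimedean case this is Corlette's theorem \cite{corlette_archimedean_1992}, and in the non-archimedean case the theorem of Gromov and Schoen \cite{gromov_harmonic_1992} --- then forces this homomorphism to extend to a continuous homomorphism of ${\rm{Sp}}(n,1)$. Since ${\rm{Sp}}(n,1)$ is simple, the extension is injective, so $\rho$ is injective on the finite-index subgroup; as $\Gamma$ is torsion-free, $\ker\rho$ is trivial, giving the dichotomy.

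With the dichotomy in hand, the construction of $H$ is short. The lattice $\Gamma$ itself has faithful linear representations, so we must pass to a proper quotient. Choose a nontrivial element $g\in\Gamma$ generating a maximal cyclic subgroup, and invoke small-cancellation theory over hyperbolic groups (Gromov, Ol'shanskii, Delzant): for all sufficiently large $N$, the quotient $H:=\Gamma/\langle\!\langle g^N\rangle\!\rangle$ is an infinite, torsion-free, non-elementary hyperbolic group. I claim $H$ is the desired group. Indeed, let $k$ be any field and $\bar\rho:H\to \mathrm{GL}_d(k)$ a representation; composing with the quotient map gives $\rho:\Gamma\to \mathrm{GL}_d(k)$ with $g^N\in\ker\rho$. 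Since $g^N\ne 1$ in $\Gamma$, the map $\rho$ is not injective, so by the dichotomy $\rho(\Gamma)$ is finite; hence $\bar\rho(H)=\rho(\Gamma)$ is finite.

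The main obstacle is the second step: passing from a representation over an arbitrary field to a setting where the Corlette--Gromov--Schoen superrigidity theorems apply. This requires the reduction to finitely generated fields, the choice of an appropriate completion (covering both archimedean and non-archimedean places, and fields of positive characteristic), and the structure-theoretic manoeuvres with the unipotent radical and the reductive quotient that let property (T) do its work. All of this is standard in the literature on non-linearity of rigid groups --- this is essentially Kapovich's argument in \cite[Theorem 8.1]{kapovich_representations_2005} --- but it is the technical heart of the matter; by comparison, the production of the hyperbolic quotient $H$ is routine once one cites the appropriate form of small cancellation over hyperbolic groups.
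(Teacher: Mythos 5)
Your overall strategy is the same as the paper's (which simply follows Kapovich's Theorem 8.1): start from a torsion-free uniform lattice $\Gamma$ in ${\rm{Sp}}(n,1)$, use Corlette and Gromov--Schoen superrigidity to show that every linear representation of $\Gamma$ over any field is either finite or faithful, and then pass to a proper infinite hyperbolic quotient. That part is fine. But there is a genuine error in your construction of the quotient, and it lands exactly on the one point where this theorem goes beyond Kapovich's statement, namely the requirement that $H$ be \emph{torsion-free}.

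You set $H:=\Gamma/\langle\!\langle g^N\rangle\!\rangle$ for a large $N$. In the small-cancellation theory over hyperbolic groups that you are invoking (Ol'shanskii, Delzant), the image of $g$ in such a quotient is a non-trivial element of order exactly $N$ -- indeed, manufacturing torsion in this way is the standard use of these quotients (e.g.\ in constructions of free Burnside groups). So your $H$ contains torsion and does not satisfy the conclusion of the theorem. The repair is to kill the normal closure of a suitable word $w$ itself rather than a proper power $g^N$: for a word $w$ satisfying an appropriate small-cancellation condition over $\Gamma$, the quotient $\Gamma/\langle\!\langle w\rangle\!\rangle$ is an infinite non-elementary hyperbolic group in which every torsion element is the image of a torsion element of $\Gamma$ (this is \cite[Lemma 6.3]{osin_small_2010}, which the paper cites for exactly this purpose), and hence is torsion-free since $\Gamma$ is. Note also that your final deduction then needs a tiny adjustment: with $w$ in place of $g^N$ you argue that a representation of $H$ pulls back to a representation of $\Gamma$ killing $w\ne 1$, so it is non-injective and therefore finite by the dichotomy. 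Everything else in your outline is consistent with the intended argument.
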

\begin{proof}
The statement of this theorem is the same as \cite[Theorem 8.1]{kapovich_representations_2005}, with the additional stipulation that the group $H$ is torsion-free.  Following Kapovich, we start with a uniform lattice $\Gamma$ in the isometry group of quaternionic hyperbolic space.  By Selberg's Lemma, we may assume that $\Gamma$ is torsion free.  We then take $H$ (which is $G$ in Kapovich's notation) to be any infinite small-cancellation quotient of $\Gamma$.  As Kapovich explains, the group $H$ then has no infinite linear representations over any field.

For a suitable choice of small-cancellation quotient, any torsion in $H$ is the image of torsion in $\Gamma$.  (For instance, this follows from \cite[Lemma 6.3]{osin_small_2010}, which even deals with the relatively hyperbolic setting.)  Such a choice of $H$ is therefore torsion-free. 
\end{proof}

The following theorem covers parts (\ref{i: finite linear rep}) and (\ref{i: finite linear rep fixed k}) of Theorem \ref{t:hyp}.

\begin{theorem}\label{thm: Hyperbolic linear representations}
Fix any infinite field $k$. There is a recursive sequence of torsion-free hyperbolic groups $\Gamma_n$ with the property that:
\begin{enumerate}
\item for any $n\in\N$, $\Gamma_n$ has a finite-dimensional representation over $k$ with infinite image if and only if $\Gamma_n$ has a finite-dimensional representation over some field with infinite image; and
\item the set of $n\in\N$ such that $\Gamma_n$ has a finite-dimensional representation over $k$ with infinite image is recursively enumerable but not recursive.
\end{enumerate}
\end{theorem}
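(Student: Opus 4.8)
The plan is to combine the non-linear hyperbolic group of Theorem~\ref{thm: Non-linear hyperbolic group} with the Belegradek--Osin form of the Rips construction (Theorem~\ref{thm: Belegradek--Osin}) and the undecidability of largeness established in Corollary~\ref{cor: NPC largeness}, following the template of the proof of Theorem~\ref{thm: Hyperbolic largeness and vb_1}. Fix a torsion-free, non-elementary hyperbolic group $H$ as in Theorem~\ref{thm: Non-linear hyperbolic group}, so that every finite-dimensional linear representation of $H$ over \emph{any} field has finite image. Let $G_n$ be the recursive sequence of torsion-free groups $\pi_1 X_n$ produced by Corollary~\ref{cor: NPC largeness}: the set $\{n\in\N\mid G_n \text{ is large}\}$ is recursively enumerable but not recursive, and since $G_n$ is the fundamental group of an aspherical square complex, $G_n$ is large if and only if $\widehat{G}_n\ncong 1$. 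Apply the algorithm of Theorem~\ref{thm: Belegradek--Osin} to each pair $(H,G_n)$ to obtain, recursively, short exact sequences
\[
1\to N_n\to \Gamma_n\to G_n\to 1
\]
with $\Gamma_n$ torsion-free hyperbolic and $N_n$ a quotient of $H$. Every finite-dimensional representation of $N_n$ over any field then has finite image, since such a representation pulls back along $H\twoheadrightarrow N_n$ to one of $H$ with the same image.

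The crux is the equivalence: \emph{$\Gamma_n$ admits a finite-dimensional representation with infinite image over some field if and only if $G_n$ is large.} Suppose first that $G_n$ is large. Then $\Gamma_n$ is large, because the preimage in $\Gamma_n$ of a finite-index subgroup of $G_n$ mapping onto a non-abelian free group is a finite-index subgroup of $\Gamma_n$ with the same property. Composing the surjection of such a subgroup $K\leq\Gamma_n$ onto a free group with an embedding $F_2\hookrightarrow\mathrm{SL}_2(k)$ (available because $k$ is infinite) and inducing the resulting representation of $K$ up to $\Gamma_n$ produces a finite-dimensional representation of $\Gamma_n$ over $k$ with infinite image; in particular $\Gamma_n$ has such a representation over $k$ and over some field.

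Conversely, suppose $\rho\colon\Gamma_n\to \mathrm{GL}_d(K)$ has infinite image. Replacing $K$ by the subfield generated by the matrix entries of the images of a finite generating set, we may assume $K$ is finitely generated, so $\rho(\Gamma_n)$ is a finitely generated linear group and hence residually finite by Mal'cev's theorem. Since $N_n$ is a quotient of $H$, the subgroup $\rho(N_n)$ is finite and normal in $\rho(\Gamma_n)$, and $\rho(\Gamma_n)/\rho(N_n)$ is infinite. A short elementary argument --- choose a finite-index subgroup $M\leq\rho(\Gamma_n)$ with $M\cap\rho(N_n)=1$ and $[\rho(\Gamma_n):M]>|\rho(N_n)|$, so that the image of $M$ is a proper finite-index subgroup of $\rho(\Gamma_n)/\rho(N_n)$ --- shows that the infinite residually finite group $\rho(\Gamma_n)/\rho(N_n)$ has a non-trivial finite quotient. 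As $\rho(\Gamma_n)/\rho(N_n)$ is a quotient of $G_n$, this gives $\widehat{G}_n\ncong 1$, i.e.\ $G_n$ is large.

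Granting the equivalence, the theorem follows. The three conditions ``$\Gamma_n$ has a finite-dimensional representation over $k$ with infinite image'', ``$\Gamma_n$ has a finite-dimensional representation over some field with infinite image'', and ``$G_n$ is large'' are pairwise equivalent (the first trivially implies the second, and the two halves of the crux close the cycle), so (1) holds, and the set of $n$ in (2) coincides with $\{n\in\N\mid G_n \text{ is large}\}$, which is recursively enumerable but not recursive. I expect the only delicate point to be the converse half of the crux over an arbitrary field: one must descend to a finitely generated subfield and invoke Mal'cev residual finiteness in order to manufacture a genuine finite quotient of $G_n$ from an infinite linear image of $\Gamma_n$. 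The remaining steps are a routine assembly of Theorems~\ref{thm: Belegradek--Osin} and~\ref{thm: Non-linear hyperbolic group} with Corollary~\ref{cor: NPC largeness}.
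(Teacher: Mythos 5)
Your proposal follows the paper's proof essentially verbatim: the same $H$ from Theorem \ref{thm: Non-linear hyperbolic group}, the same sequence $G_n$ from Corollary \ref{cor: NPC largeness}, the same Belegradek--Osin extensions $\Gamma_n$, and the same key reduction of infinite linear images of $\Gamma_n$ to those of $G_n$ via finiteness of $\rho(N_n)$ and residual finiteness of finitely generated linear groups. The only cosmetic difference is that you close the converse by extracting a non-trivial finite quotient of $G_n$ (hence largeness), whereas the paper factors the restricted representation through a finite-index subgroup of $G_n$ to produce an infinite linear image of $G_n$ directly; both follow immediately from the same point.
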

\begin{proof}
Let $X_n$ be the sequence of square complexes output by Corollary \ref{cor: NPC largeness} and let $G_n=\pi_1X_n$. Finitely generated linear groups are residually finite, so for any infinite field $k$, $G_n$ has a finite-dimensional representation over $k$ with infinite image if and only if $G_n$ is large; furthermore, the set of natural numbers $n$ such that $G_n$ has such a representation is recursively enumerable but not recursive.

Let $H$ be the torsion-free, non-elementary hyperbolic group of Theorem \ref{thm: Non-linear hyperbolic group}.  For each $n$, let $\Gamma_n$ be the torsion-free hyperbolic group that is the output of the algorithm of Theorem \ref{thm: Belegradek--Osin} with input $G_n$ and $H$.

The result now follows from the claim that, for any field $k$, $\Gamma_n$ has a finite-dimensional representation over $k$ with infinite image if and only if $G_n$ does.  Indeed, if $G_n$ has such a representation then $\Gamma_n$ clearly does.  Conversely, suppose that $f:\Gamma_n\to {\rm{GL}}(m,k)$ has infinite image.  If $N$ is the kernel of the map $\Gamma_n\to G_n$ then, because $N$ is a quotient of $H$, it follows that $f(N)$ is finite. Because $f(\Gamma_n)$ is residually finite, there exists a proper subgroup $K$ of finite index in $f(\Gamma_n)$ such that $K\cap f(N)=1$.  Then $L=f^{-1}(K)$ is a subgroup of finite index in $\Gamma_n$ with an infinite representation $f|_L$ over $k$, and $f|_L(L\cap N)=1$.  Therefore, $f|_L$ factors through the restriction to $L$ of the map $\Gamma_n\to G_n$. It follows that $G_n$ has a subgroup of finite index with an infinite representation over $k$, and so $G_n$ also has such a representation.
\end{proof}

\subsection{Profinite undecidability in the hyperbolic case}\label{ss: RF or undecidability}

We finish with the following conjecture.

\begin{conjecture}\label{conj: Hyperbolic undecidability}
There is no algorithm that can determine whether or not a given hyperbolic group $\Gamma$ has $\widehat{\Gamma}\cong1$.
\end{conjecture}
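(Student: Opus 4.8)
\medskip

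\noindent\emph{Towards a proof of Conjecture~\ref{conj: Hyperbolic undecidability}.}
The natural plan is to run the strategy behind the Encoding Theorem (Theorem~\ref{t:tech}) entirely inside the class of hyperbolic groups. Starting from Slobodskoi's group $G=\langle A\mid R\rangle$ and a word $w\in F(A)$, one would first replace $G$ by a \emph{hyperbolic} proxy $H$ with $\widehat H\cong\widehat G$, and then apply the four-step construction of Section~\ref{s:proof} to $H$ and $w$ to obtain a group $\Gamma_w$ with $\widehat{\Gamma}_w\cong 1\Leftrightarrow w=_{\widehat G}1$; combined with Theorem~\ref{thm: Slobodskoi} this would yield undecidability. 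Several of the steps survive the passage to the hyperbolic world without essential change. Virtually free groups are hyperbolic, so the strengthened omnipotence of Theorem~\ref{thm: Strengthened omnipotence} applies verbatim. The multiple HNN extensions and the amalgam $\Gamma\ast_{\overline F=\overline F'}\Gamma'$ are performed along quasiconvex malnormal subgroups (Proposition~\ref{prop: Malnormality of F}), so by the combination theorem of Bestvina and Feighn the resulting groups are again hyperbolic. Finally, the residual finiteness input of Step~4 can be upgraded: a quasiconvex malnormal amalgam of virtually special hyperbolic groups is again hyperbolic and virtually special by the work of Wise and Agol \cite{agol_virtual_2013}, hence residually finite, so $w\neq_{\widehat G}1$ would still force $\widehat{\Gamma}_w\ncong 1$ (as in \cite{wise_residual_2002} for virtually free amalgams).

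The single place where the argument does not go through directly is the very first step. Slobodskoi's seed group $G$ is not hyperbolic and, crucially, is not residually finite, so $G\to\widehat G$ has non-trivial kernel. To realise $\widehat G$ as the profinite completion of a hyperbolic group one would apply the Belegradek--Osin form of the Rips construction (Theorem~\ref{thm: Belegradek--Osin}) to obtain a short exact sequence $1\to N\to H\to G\to 1$ with $H$ hyperbolic and $N$ a quotient of a fixed non-elementary hyperbolic group; then $\widehat H\cong\widehat G$ precisely when $\widehat N\cong 1$. But if $N\trianglelefteq H$ is infinite with $\widehat N\cong 1$, then $H$ cannot be residually finite: any finite quotient of $H$ separating a non-trivial element of $N$ would produce a non-trivial finite quotient of $N$. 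Thus the plan reduces the conjecture to producing a hyperbolic group admitting an infinite quotient (indeed, a normal subgroup) with trivial profinite completion; chasing through the equivalences this is precisely the content of Theorem~\ref{t:mainHyp}, namely that Conjecture~\ref{conj: Hyperbolic undecidability} holds if and only if there exists a non-residually-finite hyperbolic group.

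Accordingly, the expected main obstacle is not any part of the combinatorial or geometric machinery above, but exactly this notorious open problem: at present no hyperbolic group is known to fail residual finiteness, and a proof of the conjecture would in particular exhibit one. An unconditional resolution therefore appears out of reach with current technology. What the plan does deliver is a clean reduction — turning the profinite-triviality undecidability question for hyperbolic groups into the residual finiteness question — together with a complete verification that every ingredient except the hyperbolic, non-residually-finite building block can be supplied.
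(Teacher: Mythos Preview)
You are right that this is a conjecture, not a theorem, and you correctly land on the equivalence with the existence of a non-residually-finite hyperbolic group, which is exactly what the paper records as Theorem~\ref{t:mainHyp}. So your conclusion matches the paper.

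However, your route to that equivalence is more elaborate than necessary and contains a technical slip. The paper does \emph{not} try to re-run the Encoding Theorem inside the hyperbolic category. Its argument for Theorem~\ref{t:mainHyp} is a one-line transfer: assuming a non-trivial hyperbolic $H_0$ with $\widehat{H}_0=1$ exists, apply Belegradek--Osin (Theorem~\ref{thm: Belegradek--Osin}) with $H=H_0$ directly to the sequence $G_n$ already produced by Theorem~\ref{t:main} (or Theorem~\ref{t:covers}); since every quotient $N$ of $H_0$ has $\widehat N=1$, one gets $\widehat{\Gamma}_n\cong\widehat{G}_n$, and undecidability passes across. No hyperbolic version of Steps~1--4 is needed at all.

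Your more ambitious plan, by contrast, asserts that ``the multiple HNN extensions and the amalgam $\Gamma\ast_{\overline F=\overline F'}\Gamma'$ are performed along quasiconvex malnormal subgroups'' so that Bestvina--Feighn applies. This conflates two different groups. Proposition~\ref{prop: Malnormality of F} establishes malnormality of $\overline F$ in the virtually free \emph{quotient} $\Gamma$, not malnormality of $F$ in $G_2$; and it is $G_w=G_2\ast_{F}G_2'$, not its virtually free image, that you would need to be hyperbolic. Likewise, the HNN extensions defining $G_1$ are over cyclic subgroups of the seed group, where no quasiconvex-malnormality is established (indeed the $a_i$ are deliberately made conjugate to $w$). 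So the claim that ``several of the steps survive'' is not justified as written. Fortunately none of this matters for the actual reduction: once you accept the paper's shortcut via Belegradek--Osin, the only missing ingredient is precisely the hypothetical $H_0$, and your second paragraph identifies that obstacle correctly.
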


Since the triviality problem is solvable for hyperbolic groups, the above conjecture is false if every non-trivial hyperbolic group $\Gamma$ has $\widehat{\Gamma}\ncong 1$.  In fact, I. Kapovich and Wise proved that every non-trivial (torsion-free) hyperbolic group $\Gamma$ has $\widehat{\Gamma}\ncong 1$ if and only if every (torsion-free) hyperbolic group is residually finite \cite{kapovich_equivalence_2000}.  Conjecture \ref{conj: Hyperbolic undecidability} therefore implies the well known conjecture that there exists a non-residually finite hyperbolic group \cite[Question 1.15]{bestvina_questions_????}.  In fact, our final theorem shows that the two conjectures are equivalent (even in the torsion-free case).

\begin{theorem}\label{t:mainHyp} 
The following statements are equivalent.
\begin{enumerate}
\item Every non-trivial (torsion-free) hyperbolic group has a proper subgroup of finite index.
\item There is an algorithm that, given a finite presentation of a  (torsion-free) hyperbolic
group, will  determine whether or not the profinite completion of that group is trivial.
\end{enumerate}
\end{theorem}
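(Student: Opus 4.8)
The plan is to prove the two implications separately, running the torsion-free and the general statements in parallel: the only difference is that in the torsion-free case one invokes the torsion-free refinements of the cited results, all of which are available. For $(1)\Rightarrow(2)$, the observation to make first is that, for any group, having a proper subgroup of finite index is the same as having a non-trivial finite quotient, hence the same as $\widehat\Gamma\ncong 1$; so statement $(1)$ asserts precisely that a (torsion-free) hyperbolic group $\Gamma$ satisfies $\widehat\Gamma\cong 1$ if and only if $\Gamma\cong 1$. Granting this, $(2)$ is immediate: the triviality problem for hyperbolic groups is solvable, as recalled above, so on input a finite presentation of a (torsion-free) hyperbolic group one simply decides whether the group is trivial and reports $\widehat\Gamma\cong 1$ exactly in that case.

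For $(2)\Rightarrow(1)$ I would argue by contraposition. Assume $(1)$ fails, so there is a non-trivial (torsion-free) hyperbolic group $\Gamma_0$ with $\widehat{\Gamma}_0\cong 1$; such a $\Gamma_0$ is automatically non-elementary, since a non-trivial virtually cyclic group has a non-trivial finite quotient. Fix, once and for all, a finite presentation of $\Gamma_0$. Using Theorem~\ref{t:main} in the general case --- or Theorem~\ref{t:covers} together with the fact that non-positively curved square complexes are aspherical and therefore have torsion-free fundamental group, in the torsion-free case --- choose a recursive sequence of (torsion-free) finitely presented groups $G_n$ for which $S:=\{n\in\N\mid\widehat{G}_n\ncong 1\}$ is recursively enumerable but not recursive. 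Feeding the fixed presentation of $\Gamma_0$ and a presentation of $G_n$ into the algorithm of Theorem~\ref{thm: Belegradek--Osin} yields, for each $n$, a (torsion-free) hyperbolic group $\Gamma_n$ with a short exact sequence $1\to N_n\to\Gamma_n\to G_n\to 1$ in which $N_n$ is a quotient of $\Gamma_0$; the sequence $(\Gamma_n)$ is recursive because the only non-algorithmic ingredient, $\Gamma_0$, has been frozen as a constant.

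The one substantive point is then that $\widehat{\Gamma}_n\cong\widehat{G}_n$. Since $N_n$ is a quotient of $\Gamma_0$ it inherits $\widehat{N}_n\cong 1$, so $N_n$ has no non-trivial finite quotient; hence every homomorphism from $\Gamma_n$ to a finite group kills $N_n$ and factors through $\Gamma_n/N_n=G_n$, giving $\widehat{\Gamma}_n\cong\widehat{G}_n$. Therefore $\{n\mid\widehat{\Gamma}_n\ncong 1\}=S$ is recursively enumerable but not recursive, so no algorithm can decide whether $\widehat{\Gamma}_n$ is trivial, contradicting $(2)$. I expect the main obstacle to lie not in any calculation but in the logical bookkeeping of this last implication: one must be careful that $\Gamma_0$ --- whose existence is only hypothetical --- may legitimately be treated as a fixed input, so that $(\Gamma_n)$ is genuinely a recursive sequence, and that the torsion-free versions of Theorems~\ref{t:covers} and~\ref{thm: Belegradek--Osin} can be matched with a torsion-free choice of $\Gamma_0$. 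The identification $\widehat{\Gamma}_n\cong\widehat{G}_n$ itself is a one-line consequence of the fact that a normal subgroup with no finite quotients is invisible to the profinite completion of the ambient group.
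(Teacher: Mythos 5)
Your proposal is correct and follows essentially the same route as the paper: decidability of the triviality problem handles $(1)\Rightarrow(2)$, and for the converse one fixes a hypothetical non-trivial (torsion-free) hyperbolic group with trivial profinite completion, feeds it as the fixed input $H$ to the Belegradek--Osin construction applied to the sequence from Theorem~\ref{t:covers}, and observes that $\widehat{\Gamma}_n\cong\widehat{G}_n$ since the normal subgroup $N_n$ has no non-trivial finite quotients. The only difference is that you spell out the justification of $\widehat{\Gamma}_n\cong\widehat{G}_n$ and the recursiveness of $(\Gamma_n)$, which the paper leaves implicit.
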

\begin{proof}
There is an algorithm that can decide if a given hyperbolic group is trivial,
and if (1) holds then (2) reduces to checking if the given group is trivial.  For the converse, suppose that there exists a non-trivial hyperbolic group $H_0$ with $\widehat{H}_0=1$.  Clearly $H_0$ is non-elementary.  Let $G_n$ be a sequence of (torsion-free) groups that witnesses the undecidability in Theorem \ref{t:covers}, let $\Gamma_n$ be the sequence of hyperbolic groups obtained by applying Theorem \ref{thm: Belegradek--Osin} to $G_n$ with $H=H_0$, and note that $\widehat{H}=1$ implies $\widehat{\Gamma}_n\cong\widehat{G}_n$.  It is a feature of Theorem \ref{thm: Belegradek--Osin} that if $H_0$ is torsion-free then so are the groups $\Gamma_n$. 
\end{proof}

\bibliographystyle{plain}

\begin{thebibliography}{10}

\bibitem{adyan_algorithmic_1955}
S.~I. Adyan.
\newblock Algorithmic unsolvability of problems of recognition of certain
  properties of groups.
\newblock {\em Dokl. Akad. Nauk SSSR (N.S.)}, 103:533--535, 1955.

\bibitem{adyan_unsolvability_1957}
S.~I. Adyan.
\newblock Unsolvability of some algorithmic problems in the theory of groups.
\newblock {\em Trudy Moskov. Mat. Ob\v s\v c.}, 6:231--298, 1957.

\bibitem{agol_virtual_2013}
Ian Agol.
\newblock The virtual {H}aken conjecture.
\newblock {\em Documenta Math.}, 18:1045--1087, 2013, with an appendix by Ian
  Agol, Daniel Groves and Jason Manning.

\bibitem{bajpai_omnipotence_2007}
Jitendra Bajpai.
\newblock Omnipotence of surface groups.
\newblock {\em Masters Thesis, {McGill} University}, 2007.

\bibitem{bausmlag_unsolvable_1959}
Gilbert Baumslag, W.~W. Boone, and B.~H. Neumann.
\newblock Some unsolvable problems about elements and subgroups of groups.
\newblock {\em Math. Scand.}, 7:191--201, 1959.

\bibitem{belegradek_rips_2008}
Igor Belegradek and Denis Osin.
\newblock Rips construction and {K}azhdan property ({T}).
\newblock {\em Groups Geom. Dyn.}, 2(1):1--12, 2008.

\bibitem{bestvina_questions_????}
Mladen Bestvina.
\newblock Questions in geometric group theory.
\newblock
  \url{http://www.math.utah.edu/\textasciitilde bestvina/eprints/questions-updated.pdf}.

\bibitem{bhattacharjee_constructing_1994}
Meenaxi Bhattacharjee.
\newblock Constructing finitely presented infinite nearly simple groups.
\newblock {\em Comm. Algebra}, 22(11):4561--4589, 1994.

\bibitem{boone_word_1959}
William~W. Boone.
\newblock The word problem.
\newblock {\em Ann. of Math. (2)}, 70:207--265, 1959.

\bibitem{bridson_metric_1999}
Martin~R. Bridson and Andr\'e Haefliger.
\newblock {\em Metric spaces of non-positive curvature}, volume 319 of {\em
  Grundlehren der Mathematischen Wissenschaften {[Fundamental} Principles of
  Mathematical Sciences]}.
\newblock {Springer-Verlag}, Berlin, 1999.

\bibitem{bridson_isomorphism_2013}
Martin~R. Bridson and Henry Wilton.
\newblock The isomorphism problem for profinite completions of finitely
  presented, residually finite groups.
\newblock {\em Groups Geom. Dyn.}, to appear, 2014.

\bibitem{bridson_undecidability_2014}
Martin~R. Bridson and Henry Wilton.
\newblock Undecidability and the developability of permutoids and rigid pseudogroups.
\newblock {\em arXiv:1405.4368}, 2014.

\bibitem{burger_finitely_1997}
Marc Burger and Shahar Mozes.
\newblock Finitely presented simple groups and products of trees.
\newblock {\em C. R. Acad. Sci. Paris S\'er. I Math.}, 324(7):747--752, 1997.

\bibitem{button_largeness_2010}
Jack~O. Button.
\newblock Largeness of {LERF} and 1-relator groups.
\newblock {\em Groups Geom. Dyn.}, 4(4):709--738, 2010.

\bibitem{cameron_extending_2004}
Peter Cameron.
\newblock Extending partial permutations.
\newblock \url{http://www.maths.qmul.ac.uk/\textasciitilde pjc/odds/partial.pdf},
  2004.

\bibitem{corlette_archimedean_1992}
Kevin Corlette.
\newblock Archimedean superrigidity and hyperbolic geometry.
\newblock {\em Ann. of Math. (2)}, 135(1):165--182, 1992.

\bibitem{dehn_unendliche_1911}
M.~Dehn.
\newblock \"{U}ber unendliche diskontinuierliche {G}ruppen.
\newblock {\em Math. Ann.}, 71(1):116--144, 1911.

\bibitem{gersten_small_1990}
S.~M. Gersten and H.~B. Short.
\newblock Small cancellation theory and automatic groups.
\newblock {\em Invent. Math.}, 102(2):305--334, 1990.

\bibitem{gromov_harmonic_1992}
Mikhail Gromov and Richard Schoen.
\newblock Harmonic maps into singular spaces and {$p$}-adic superrigidity for
  lattices in groups of rank one.
\newblock {\em Inst. Hautes \'Etudes Sci. Publ. Math.}, 76:165--246, 1992.

\bibitem{miller_iii_decision_1992}
Charles F.~Miller {III}.
\newblock Decision problems for groups---survey and reflections.
\newblock In {\em Algorithms and classification in combinatorial group theory
  {(Berkeley,} {CA,} 1989)}, number~23 in Math. Sci. Res. Inst. Publ., pages
  1---59. Springer, New York, 1992.

\bibitem{kan_every_1976}
D.~M. Kan and W.~P. Thurston.
\newblock Every connected space has the homology of a {$K(\pi ,1)$}.
\newblock {\em Topology}, 15(3):253--258, 1976.

\bibitem{kapovich_equivalence_2000}
Ilya Kapovich and Daniel~T. Wise.
\newblock The equivalence of some residual properties of word-hyperbolic
  groups.
\newblock {\em J. Algebra}, 223(2):562--583, 2000.

\bibitem{kapovich_representations_2005}
Michael Kapovich.
\newblock Representations of polygons of finite groups.
\newblock {\em Geom. Topol.}, 9:1915--1951 (electronic), 2005.

\bibitem{kharlampovich_universal_1983}
O.~G. Kharlampovich.
\newblock The universal theory of the class of finite nilpotent groups is
  undecidable.
\newblock {\em Mat. Zametki}, 33(4):499--516, 1983.

\bibitem{kharlampovich_decidability_2013}
Olga Kharlampovich and Alexei Myasnikov.
\newblock Decidability of the elementary theory of a torsion-free hyperbolic
  group.
\newblock {\em arXiv:1303.0760v4}, 2013.

\bibitem{leary_metric_2013}
Ian~J. Leary.
\newblock A metric {K}an-{T}hurston theorem.
\newblock {\em J. Topol.}, 6(1):251--284, 2013.

\bibitem{makanin_decidability_1984}
G.~S. Makanin.
\newblock Decidability of the universal and positive theories of a free group.
\newblock {\em Izvestiya Akademii Nauk {SSSR.} Seriya Matematicheskaya},
  48(4):735---749, 1984.

\bibitem{niblo_geometry_1998}
G.~A. Niblo and L.~D. Reeves.
\newblock The geometry of cube complexes and the complexity of their
  fundamental groups.
\newblock {\em Topology}, 37(3):621--633, 1998.

\bibitem{novikov_algorithmic_1955}
P.~S. Novikov.
\newblock {\em On the algorithmic unsolvability of the word problem in group
  theory}.
\newblock Trudy Mat. Inst. im. Steklov. no. 44. Izdat. Akad. Nauk SSSR, Moscow,
  1955.

\bibitem{osin_small_2010}
Denis Osin.
\newblock Small cancellations over relatively hyperbolic groups and embedding
  theorems.
\newblock {\em Ann. of Math. (2)}, 172(1):1--39, 2010.

\bibitem{papasoglu_algorithm_1996}
P.~Papasoglu.
\newblock An algorithm detecting hyperbolicity.
\newblock In {\em Geometric and computational perspectives on infinite groups
  {(Minneapolis,} {MN} and New Brunswick, {NJ,} 1994)}, volume~25 of {\em
  {DIMACS} Ser. Discrete Math. Theoret. Comput. Sci.}, pages 193---200. Amer.
  Math. Soc., Providence, {RI}, 1996.

\bibitem{pride_concept_1980}
Stephen~J. Pride.
\newblock The concept of ``largeness'' in group theory.
\newblock In {\em Word problems, {II} ({C}onf. on {D}ecision {P}roblems in
  {A}lgebra, {O}xford, 1976)}, volume~95 of {\em Stud. Logic Foundations
  Math.}, pages 299--335. North-Holland, Amsterdam, 1980.

\bibitem{rabin_recursive_1958}
Michael~O. Rabin.
\newblock Recursive unsolvability of group theoretic problems.
\newblock {\em Ann. of Math. (2)}, 67:172--194, 1958.

\bibitem{rips_subgroups_1982}
E.~Rips.
\newblock Subgroups of small cancellation groups.
\newblock {\em The Bulletin of the London Mathematical Society}, 14(1):45---47,
  1982.

\bibitem{sela_diophantine_2009}
Z.~Sela.
\newblock Diophantine geometry over groups. {VII}. {T}he elementary theory of a
  hyperbolic group.
\newblock {\em Proc. Lond. Math. Soc. (3)}, 99(1):217--273, 2009.

\bibitem{slobodskoi_undecidability_1981}
A.~M. Slobodsko{\u\i}.
\newblock Undecidability of the universal theory of finite groups.
\newblock {\em Algebra i Logika}, 20(2):207--230, 251, 1981.

\bibitem{stallings_topology_1983}
John~R. Stallings.
\newblock Topology of finite graphs.
\newblock {\em Inventiones Mathematicae}, 71(3):551--565, 1983.

\bibitem{wilton_virtual_2010}
Henry Wilton.
\newblock Virtual retractions, conjugacy separability and omnipotence.
\newblock {\em J. Algebra}, 323:323--335, 2010.

\bibitem{wise_cubulating_2004}
D.~T. Wise.
\newblock Cubulating small cancellation groups.
\newblock {\em Geom. Funct. Anal.}, 14(1):150--214, 2004.

\bibitem{wise_structure_2012}
D.~T. Wise.
\newblock The structure of groups with a quasi-convex hierarchy.
\newblock Preprint, April 2012.

\bibitem{wise_subgroup_2000}
Daniel~T. Wise.
\newblock Subgroup separability of graphs of free groups with cyclic edge
  groups.
\newblock {\em The Quarterly Journal of Mathematics}, 51(1):107--129, 2000.

\bibitem{wise_residual_2002}
Daniel~T. Wise.
\newblock The residual finiteness of negatively curved polygons of finite
  groups.
\newblock {\em Inventiones Mathematicae}, 149(3):579--617, 2002.

\bibitem{wise_complete_2007}
Daniel~T. Wise.
\newblock Complete square complexes.
\newblock {\em Comment. Math. Helv.}, 82(4):683--724, 2007.

\end{thebibliography}

\end{document}